\numberwithin{equation}{section}
\theoremstyle{plain}
\newtheorem{theorem}{Theorem}[section]
\newtheorem{proposition}[theorem]{Proposition}
\newtheorem{lemma}[theorem]{Lemma}
\newtheorem{corollary}[theorem]{Corollary}
\newtheorem{conjecture}[theorem]{Conjecture}
\newtheorem{algorithm}[theorem]{Algorithm}
\theoremstyle{definition}
\newtheorem{definition}[theorem]{Definition}
\newtheorem{example}[theorem]{Example}
\newtheorem{question}[theorem]{Question}
\theoremstyle{remark}
\let\H\undefined
\let\S\undefined
\DeclareMathOperator{\rank}{rank}
\DeclareMathOperator{\sgn}{sgn}
\DeclareMathOperator{\Id}{Id}
\DeclareMathOperator{\brank}{brank}
\DeclareMathOperator{\Hank}{H}
\DeclareMathOperator{\H}{H}
\DeclareMathOperator{\S}{S}
\DeclareMathOperator{\T}{T}
\newcommand{\mA}{\mathcal{A}}
\newcommand{\mH}{\mathcal{H}}
\newcommand{\mc}{\mathcal}
\newcommand{\re}{\mathbb{R}}
\newcommand{\N}{\mathbb{N}}
\newcommand{\cpx}{\mathbb{C}}
\newcommand{\bbm}{\begin{bmatrix}}
\newcommand{\ebm}{\end{bmatrix}}
\newcommand{\lmd}{\lambda}
\newcommand{\bca}{\begin{cases}}
\newcommand{\eca}{\end{cases}}
\newcommand{\bit}{\begin{itemize}}
\newcommand{\eit}{\end{itemize}}
\newcommand{\reff}[1]{(\ref{#1})}
\newcommand{\bea}{\begin{eqnarray}}
\newcommand{\eea}{\end{eqnarray}}
\newcommand{\be}{\begin{equation}}
\newcommand{\ee}{\end{equation}}
\begin{document}
\title{Hankel Tensor Decompositions and Ranks}

\author[Jiawang Nie]{Jiawang~Nie}
\address{Department of Mathematics, University of California San Diego,
9500 Gilman Drive
La Jolla, CA 92093}
\email{njw@math.ucsd.edu}

\author[Ke Ye]{Ke~Ye}
\address{
Academy of Mathematics and System Sciences,
Chinese Academy of Science, Beijing 100190, China
}
\email{keyk@amss.ac.cn}

\begin{abstract}
Hankel tensors are generalizations of Hankel matrices.
This article studies both the computational and
algebraic aspects of Hankel tensor ranks. We prove that for a low rank symmetric tensor,
there exists a base change to make it a Hankel tensor. We also provide an algorithm
that can compute the Vandermonde ranks and decompositions
for all Hankel tensors. For a generic $n$-dimensional
Hankel tensor of even order or order three,
we prove that the candecomp-parafac rank, symmetric rank, Vandermonde rank, border rank, symmetric border rank, and Vandermonde border rank all
coincide with each other. Some open questions are also posed.
\end{abstract}

\keywords{Hankel tensor, rank, Vandermonde decomposition, Waring decomposition}

\subjclass[2010]{15A18, 15A69}

\maketitle

\section{Introduction}\label{sec: 1}

\subsection{Various ranks for tensors}
\label{subsec:ranks}

For integers $m,n>0$, denote by $\operatorname{T}^m(\cpx^n)$ the space
of all $n$-dimensional complex tensors of order $m$.
A tensor $\mA \in \operatorname{T}^m(\cpx^n)$ is an array
indexed by an integer tuple $(i_1, \ldots, i_m)$ in the range
$1 \leq i_1, \ldots, i_m \leq n $, that is,
\[
\mA = ( \mA_{i_1 \ldots i_m})_{ 1 \leq i_1, \ldots, i_m \leq n}.
\]
The tensor $\mA$ is {\it symmetric} if
$\mA_{i_1 \ldots i_m}$ is invariant with respect to all permutations
of $(i_1,\ldots, i_m)$. Denote by $\operatorname{S}^m(\cpx^n)$ the vector space
of all $n$-dimensional complex symmetric tensors of order $m$.

There are various types of ranks for tensors,
which measure the complexity of tensor computations from different aspects.
The typical ones are the candecomp-parafac (cp) rank \cite{Hitchcock1927}
(also called just simply the rank, or the canonical polyadic rank, in some references),
multilinear rank \cite{LMV2000}, tensor network rank \cite{YL2017},
and nuclear rank \cite{FL2017,LC2010}
(the nuclear rank of a tensor is defined to be the smallest length of
its decompositions that achieve the nuclear norm).
The symmetric rank \cite{CGLM2008} is defined for symmetric tensors.
The Vandermonde rank \cite{Qi:Han} is defined for Hankel tensors.
The border rank for general tensors and
symmetric border rank for symmetric tensors are also defined
for studying algebraic properties.
We refer to \cite{Landsberg2012,Lim13} for various definitions of tensor ranks.
For convenience of reading, we shortly review them in the sequel.

All tensors can be expressed as linear combinations of outer
products of vectors. For $u_1, \ldots, u_m \in \cpx^n$,
the outer product $u_1 \otimes \cdots \otimes u_m$
is the tensor in $\operatorname{T}^m(\cpx^n)$ such that
for all $1 \leq i_1,\ldots,i_m \leq n$
\[
(u_1 \otimes \cdots \otimes u_m)_{i_1 \ldots i_m} = (u_1)_{i_1}
\cdots (u_m)_{i_m}.
\]
The tensors of the form $u_1 \otimes \cdots \otimes u_m$ are called rank-$1$ tensors.
The \emph{cp rank} of $\mA \in \operatorname{T}^m(\cpx^n)$ is defined as
\be \label{df:trank}
\rank(\mA) \coloneqq \min \left\{ r\in \mathbb{N}:
\mA =\sum_{i=1}^r u_{i,1} \otimes \cdots \otimes u_{i, m},
\, u_{i,j} \in \cpx^n
 \right\}.
\ee
%
%
If $\rank (\mathcal{A}) =r$,
the corresponding decomposition in \reff{df:trank}
is called a \emph{rank decomposition} of $\mathcal{A}$.
The \emph{border rank} of $\mathcal{A}$ is then defined as
\[
\brank (\mathcal{A}) \coloneqq \min \left\{  r\in \mathbb{N}:
\mathcal{A} = \lim_{p\to \infty}\sum_{i=1}^r u_{i,1}^{(p)}\otimes \cdots \otimes u_{i,m}^{(p)}, \, \,   u_{i,j}^{(p)} \in \mathbb{C}^n \right\}.
\]
Clearly, it always holds that
$
\brank(\mathcal{A}) \le \rank(\mathcal{A}).
$

For symmetric tensors, we are typically interested in their symmetric ranks.
For $\mA \in \operatorname{S}^m(\cpx^n)$,
its symmetric rank is defined as
\be \label{def:Srank}
\rank_S(\mA) \coloneqq \min \left\{ r\in \mathbb{N}:
\mA = \sum_{i=1}^r (u_i)^{\otimes m},
\,\, u_{i} \in \cpx^n
 \right\}.
\ee
In the above,
$ (u_i)^{\otimes m} := u_i \otimes \cdots \otimes u_i$,
where $u_i$ is repeated $m$ times.
If $\rank_S(\mathcal{A}) = r$,
the corresponding decomposition in \reff{def:Srank}
is called a \emph{symmetric rank decomposition} of $\mathcal{A}$.
A symmetric tensor $\mA \in \operatorname{S}^m(\cpx^n)$
can be uniquely represented by a homogeneous polynomial (i.e., a form)
of degree $m$ and in $(x_1,\ldots, x_n)$, which is
\[
\mA(x) := \sum_{1 \leq i_1, \ldots, i_m \leq n}
\mA_{i_1 \cdots i_m} x_{i_1} \cdots x_{i_m}.
\]
A symmetric decomposition of $\mathcal{A}$ is equivalent to
the decomposition of $\mA(x)$ as a sum of powers of linear forms.
In the literature, the symmetric rank decomposition
is also called a \emph{Waring decomposition},
and the symmetric rank is also called \emph{Waring rank}
\cite{Landsberg2012,OedOtt13}.
The symmetric rank of a form means the symmetric rank of
the corresponding symmetric tensor.
The \emph{symmetric border rank} of
$\mathcal{A} \in \operatorname{S}^m(\mathbb{C}^n)$ is then defined as
\[
\brank_S (\mathcal{A}) \, := \, \min \left\{  r:
\lim_{p\to \infty}\sum_{i=1}^r {u_{i}^{(p)}}^{\otimes m}
= \mathcal{A}, \, u_{i}^{(p)} \in \mathbb{C}^n \right\}.
\]
For a symmetric tensor $\mA$, it is straightforward to see that
\[
\brank (\mathcal{A}) \le
\brank_S(\mathcal{A}) \le
\rank_S(\mathcal{A}).
\]

Determining ranks and decompositions of tensors are fundamental questions
in many applications, such as signal processing \cite{C2002,LC2010},
multiway factor analysis \cite{LMV2000},
and computational complexity \cite{BCS1997,YL2016}.
A general survey about applications can be found in \cite{KolBad09}.
It is NP-hard to compute the most ranks\footnote{
There exist some ranks and decompositions which are easy to compute,
for instance, multi-linear rank and higher order singular value decomposition (HOSVD).}
and decompositions of tensors \cite{Hastad1990,HL2013}.
Even for symmetric tensors, the question of computing
their symmetric ranks and Waring decompositions
still remains NP-hard \cite{Shitov16}.
We refer to the work \cite{BBCM13,DeLaMV,DDeLa14,LRA93}
for general tensor decompositions
and refer to the work \cite{BerGimIda11,BCMT10,GPSTD,OedOtt13}
for symmetric tensor decompositions.
Other interesting questions about tensors
include low rank approximations
\cite{SL2008,GKT2013,IAHL2011,NL2014,nie2017low},
uniqueness of tensor decompositions \cite{ChOtVan15,GalMel,Kruskal1977},
symmetric rank of monomials \cite{LT2010,Oeding2016},
defining ideals of low rank tensors \cite{LM2004,LW2007},
and tensor eigenvalues
\cite{CDN,L05sin,NZ15,Q05,Q07}.

%
%
%

%
%

\subsection{Hankel tensors}

A tensor $\mH \in \operatorname{T}^m(\cpx^n)$ is called {\it Hankel} if
$\mH_{i_1 \ldots i_m}$ is invariant whenever the sum
$i_i+\cdots+i_m$ is a constant \cite{PLH2005,Qi:Han}.
In other words, $\mH$
is a Hankel tensor if and only if there exists a vector
$h:=(h_0, h_1, \ldots, h_{(n-1)m})$ such that
\be \label{eqn:defition of h}
\mH_{i_1 \ldots i_m} = h_{i_1 + \cdots + i_m - m}.
\ee
Clearly, each Hankel tensor is also symmetric.
We denote by $\Hank^m(\mathbb{C}^n)$ the linear subspace
of all Hankel tensors in $\S^m(\mathbb{C}^n)$.
It is easy to obtain that the dimension of $\Hank^m(\mathbb{C}^n)$ is $(n-1)m+1$.
As shown by Qi \cite{Qi:Han}, $\mH$ is a Hankel tensor if and only if
it has a {\it Vandermonde decomposition},
i.e., for some $\lmd_i, t_i \in \cpx$,
\[
\mH = \sum_{i=1}^r \lmd_i (1, t_i, \ldots, t_i^{n-1} )^{\otimes m}.
\]
In this paper, we consider the homogenization of the above:
\be \label{Vdcmp:H:aibi}
\mH = \sum_{i=1}^r  (a_i^{n-1} , a_i^{n-2} b_i, \ldots,
a_ib_i^{n-2}, b_i^{n-1} )^{\otimes m}.
\ee

\begin{definition}
For a Hankel tensor $\mH$, the smallest integer
$r$ for \eqref{Vdcmp:H:aibi} to hold is called the {\it Vandermonde rank}
(or just {\it $V$-rank}) of $\mH$, which is denoted by $\rank_V(\mH)$.
The corresponding decomposition is called
the {\it Vandermonde rank decomposition} or just the {\it $V$-rank decomposition}.
The Vandermonde border rank (or just the border $V$-rank) of $\mH$,
which we denote as $\brank_V(\mH)$, is the smallest $r$ such that
$\mH$ is the limit of a sequence of Hankel tensors whose $V$-rank is $r$.
\end{definition}

We remark that the simplest $n$-dimensional Hankel tensor of order $m$ is of the form
\[
(a^{n-1},a^{n-2}b,\dots, ab^{n-2},b^{n-1})^{\otimes m},\quad a,b\in \mathbb{C}
\]
and such a tensor has the Vandermonde rank one. If we consider $\mH \in \H^m(\mathbb{C}^n)$
as a tensor in $\T^m(\mathbb{C}^n)$, its cp rank
$\rank(\mH)$ is defined in \reff{df:trank};
if we consider it as a tensor in $\S^m(\mathbb{C}^n)$,
its symmetric rank $\rank_S(\mH)$ is defined in \reff{def:Srank}.
The border rank $\brank(\mH)$ and symmetric border
rank $\brank_S(\mH)$ are defined in the same way.
%
%

For a Hankel tensor $\mH$, it clearly holds that
\be \label{rl:b<r<s<v}
\brank(\mH) \leq
\rank(\mH) \leq \rank_S(\mH)  \leq \rank_V(\mH).
\ee
For the relations among various ranks of a Hankel tensor $\mH$,
we have the following two simple facts:
\bit
\item [1)] The $V$-rank of $\mH$ is $1$
if and only if $\rank_S(\mathcal{H}) = 1$.
Clearly, if $\rank_V(\mH) = 1$,
then $\mH \ne 0$ and hence $\rank_S(\mH) \geq 1$,
so they are the same by \reff{rl:b<r<s<v}.
Conversely, if $\rank_S(\mathcal{H}) = 1$, then
$\mathcal{H} = v^{\otimes m}$,
for some $v= (v_1,\dots, v_n)\in \mathbb{C}^n$.
Since $\mH$ is Hankel,
$v_{i_1}\cdots v_{i_m} = v_{j_1} \cdots v_{j_m}$
for all $i_1+\cdots +i_m = j_1 + \cdots + j_m$.
In particular, for $i_1 = \cdots = i_m = i$ and
$j_1 = \cdots = j_{m-2} = i, j_{m-1} = i-1, j_{m} = i+1 $,
we get $v_{i-1} v_{i+1} = v_i^2$ for $i =2,\dots, n-1$.
Therefore, we can parametrize $v$ as
$v = (a^{n-1},a^{n-2}b,\dots, b^{n-1})$,
so $\rank_V(\mH) = 1$.
For this case, all the ranks are the same
by \reff{rl:b<r<s<v}.

\item [2)] For the binary case (i.e., $n=2$),
we also have
$\rank_S(\mathcal{H}) = \rank_V(\mathcal{H})$.
This is because for every $2$-dimensional vector
$v = (a,b)$, the tensor power $v^{\otimes m}$
is itself a Vandermonde decomposition.
When $n=2$, it holds that
\[
\Hank^m(\mathbb{C}^2) =  \operatorname{S}^m(\mathbb{C}^2)
\simeq \mathbb{C}^{m+1}.
\]

\eit

Hankel tensors have broad applications.
They were originally defined in signal processing~\cite{PLH2005}
for studying the Harmonic Retrieval problem~\cite{LS2002}.
Moreover, Hankel tensors can also be used to
solve the interpolation problem \cite{TBM2013}.
Recently, Qi~\cite{Qi:Han} studied Vandermonde decompositions
and complete/strong Hankel tensors.
The inheritance properties and sum-of-squares
decompositions for Hankel tensors are studied
by Ding, Qi and Wei~\cite{DQW17}.
Extremal eigenvalues of Hankel tensors
are discussed in Chen, Qi and Wang~\cite{CQW16}.
Some further results about Hankel tensors appear in Chen, Li and Qi~\cite{CLQ16}.

\subsection{Contributions}

The $V$-rank decompositions of Hankel tensors
are closely related to symmetric rank decompositions of binary forms.
Let $d := (n-1)m$ and let $h$ be as in \reff{eqn:defition of h}.
The vector $h$ can be uniquely identified as a binary form of degree $d$:
\be \label{def:h(x,y)}
h(x,y) := {\sum}_{j=0}^d \binom{d}{j} h_j x^j y^{d-j}.
\ee
It can also be thought of as a symmetric binary tensor of order $d$.
By writing $\rank_S(h)$ (resp., $\brank_S(h)$),
we mean the symmetric rank (resp., symmetric border rank)
when $h$ is regarded as the symmetric tensor
represented by the binary form $h(x,y)$. Note that $\rank_S(h)$
is just the Waring rank of the form $h(x,y)$.
The Vandermonde decomposition \reff{Vdcmp:H:aibi} is equivalent to
\begin{equation}\label{eqn: decomposition of h}
h(x,y) = {\sum}_{i=1}^r (a_i x + b_i y)^d.
\end{equation}
The symmetric rank of $h$ is the smallest $r$ in the above.
In Lemma \ref{lemma: V-rank = S-rank}, we will show that
$\rank_V(\mathcal{H}) = \rank_S(h)$.

%
%

\bigskip
This article focuses on various ranks of Hankel tensors.
We mainly address the following two basic questions:
\bit

\item How can we determine the Vandermonde rank and decomposition
of a Hankel tensor?

\item What are the relations among various ranks of a Hankel tensor?

\eit

First, we propose an algorithm (Algorithm~\ref{algorithm:  1})
that can compute the Vandermonde rank and decomposition
for all Hankel tensors. This will be done in Section~\ref{sec:Vdcmp}.

Second, we show that the cp rank, symmetric rank,
border rank, symmetric border rank and Vandermonde rank
are the same for a generic $\mH \in \H^m(\cpx^n)$
when $m$ is even or $m=3$.
In particular, this implies that Comon's conjecture~\cite{Oeding}
is true for generic Hankel tensors of even order or order three.
Moreover, for a specifically given Hankel tensor,
we give concrete conditions for determining these ranks.
This will be done in Sections~\ref{sec:evenorder} and \ref{sec:order=3}.

We give some preliminary results in Section~\ref{sec:preliminary},
and conclude the paper with some open questions and conjectures
in Section~\ref{sec:openqs}.

\section{Preliminaries}
\label{sec:preliminary}

\subsection*{Notation}

The symbol $\N$ (resp., $\re$, $\cpx$) denotes the set of
nonnegative integers (resp., real, complex numbers).
The symbol $\cpx[x] := \cpx[x_1,\ldots,x_n]$
denotes the ring of polynomials in $x:=(x_1,\ldots,x_n)$
over the complex field $\cpx$.
For any $t\in \re$, $\lceil t\rceil$ (resp., $\lfloor t\rfloor$)
denotes the smallest integer not smaller
(resp., the largest integer not bigger) than $t$.
The cardinality of a set $S$ is denoted as $\#S$.
For a matrix $M$, its null space is denoted as $\ker M$.

\subsection{Elementary algebraic geometry}

For basics in algebraic geometry, we refer to \cite{Harris1992}.
A set $X \subseteq \mathbb{C}^n$ is an \emph{algebraic variety}
if there exist polynomials $f_1,\dots, f_s\in \mathbb{C}[x]$ such that
\[
X = \{ a  \in \cpx^n: \,
f_1(a) = \cdots = f_s(a) = 0 \}.
\]
The \emph{Zariski topology} on $\mathbb{C}^n$ is the topology
such that the closed sets are algebraic varieties.
An algebraic variety is called {\it irreducible}
if it is not a union of two proper algebraic varieties.
We need the notion of a generic point in an irreducible algebraic variety $V$.
For a property $P$ on $V$,
we say that a \emph{generic} point in $V$ has the property $P$
if the set of points in $V$ which do not satisfy $P$
is contained in a proper closed subset of $V$
in the Zariski topology.
For instance, a generic point $(x,y) \in \mathbb{C}^n\times \mathbb{C}^n$
uniquely determines a line $L\subset \mathbb{C}^n$ such that $x,y\in L$.
This is because lines passing through $x,y$ are not unique if and only if $x=y$
and the set $\{(x,y)\in \mathbb{C}^n \times \mathbb{C}^n:  x=y \}$ is a proper closed subset
of $\mathbb{C}^n\times \mathbb{C}^n$.
If the property $P$ is clear from the context,
we just say ``a generic point" without mentioning $P$.

%
%

The projective space $\mathbb{P}^n$ consists of all lines in
$\mathbb{C}^{n+1}$, or equivalently,
$\mathbb{P}^n$ is the set of equivalence classes
\[
[(a_0, \dots, a_n)] = \{ (\lambda a_0,\dots, \lambda a_n)
\in \mathbb{C}^{n+1}: \,
(a_0,\dots, a_n)  \ne 0, \, \lambda \ne 0 \}.
\]
A subset $X\subseteq \mathbb{P}^n$ is a \emph{projective variety}
if there exist homogeneous polynomials
$f_1,\dots,f_s\in \mathbb{C}[x_0,\dots, x_n]$ such that
\[
[a]  \in X \, \text{ if and only if } \, f_1(a) = \cdots = f_s(a) = 0.
\]

\subsection{Multilinear algebra}
\label{ssc:mulinalg}

Let $B$ be a vector space of dimension $n$ and let
$\{b_1,\dots, b_n\}$ be a set of basis. For an integer
$1 \le  p\le n$, the \emph{$p$-th exterior power} of $B$,
denoted as $\bigwedge^p B$, is the vector space spanned by
the $\binom{n}{p}$ vectors $b_{j_1}\wedge \cdots \wedge b_{j_p}$
$( 1\le j_1 < \cdots < j_p \le n )$,
where the wedge product $ v_{1}\wedge \cdots \wedge v_{p}$ is defined as
\[
v_1 \wedge \cdots \wedge v_p \coloneqq \sum_{\sigma \in \mathfrak{S}_p}
\sgn (\sigma) v_{\sigma(1)} \otimes \cdots \otimes v_{\sigma(p)}.
\]
In the above, $\mathfrak{S}_p$ is the permutation group on
$p$ elements and $\sgn (\sigma)$ is the sign of the permutation
$\sigma\in \mathfrak{S}_p$. In particular, we have
\[
v \wedge v =0,\quad v\in B.
\]
Clearly, $\bigwedge^p B$
is a linear subspace of $B^{\otimes n}$, with dimension $\binom{n}{p}$.
Moreover,
\[
v_1 \wedge \cdots \wedge v_p = \sgn (\sigma) v_{\sigma(1)}
\wedge \cdots \wedge v_{\sigma(p)},\quad \forall \, \sigma\in \mathfrak{S}_p.
\]
The exterior power $\bigwedge^p B$ is a generalization of
skew-symmetric matrices. Indeed, if $p=2$, then $\bigwedge^2 B$
is simply the vector space of all $n\times n$ skew symmetric matrices.

Let $A,B,C$ be vector spaces of dimensions $m,n,q$ respectively.
Every tensor $T\in A\otimes B \otimes C$ can be regarded as a linear map
$\varphi_T: A^\ast \to B\otimes C$. If we choose bases
$\{a_1,\dots, a_m\}$, $\{b_1,\dots, b_n\}$ and $\{c_1,\dots, c_q\}$
for $A,B,C$ respectively, then we can write $T$ as
\[
T = \sum_{i=1}^m \sum_{j=1}^n \sum_{k=1}^q t_{ijk} a_i\otimes b_j\otimes c_k.
\]
Let $\{\alpha_1,\dots, \alpha_m\}$ be the dual basis of $A^\ast$,
then $\varphi_T$ is given by
\[
\varphi_T(\alpha_i) =  \sum_{j=1}^n \sum_{k=1}^q
t_{ijk} b_j\otimes c_k.
\]
For each integer $1 \le p \le n-1$, define $\varphi_T^{p}$
to be the linear map
\begin{equation}\label{eqn:wedge}
\varphi_T^{p}: \Big( \bigwedge^p B \Big) \otimes A^\ast
\to \Big( \bigwedge^{p+1}B  \Big) \otimes C,
\end{equation}
which is obtained by tensoring $\varphi_T$ with the identity map $\operatorname{Id}_{\bigwedge^p B}: \bigwedge^p B \to \bigwedge^p B$
and projecting the image of $\varphi_T\otimes \operatorname{Id}_{\bigwedge^p B}$
in $\big( \bigwedge^p B \big) \otimes B \otimes C$
onto $\big( \bigwedge^{p+1}B \big) \otimes C$.
Here, the projection of $\bigwedge^p B \otimes B \otimes C$
onto $\bigwedge^{p+1}B \otimes C$ is determined by
\[
(b_1 \wedge \cdots \wedge b_p) \otimes b_{p+1}\otimes c \mapsto (b_1\wedge \cdots \wedge b_{p}\wedge b_{p+1}) \otimes c.
\]
To be more specific, the linear map $\varphi_T^p$ is defined such that
\[
(b_1 \wedge \cdots \wedge b_p)  \otimes \alpha_i \mapsto
\sum_{j=1}^n \sum_{k=1}^q t_{ijk}
(b_1 \wedge \cdots \wedge b_p\wedge b_{j}) \otimes c_k.
\]

\begin{theorem}\cite[Theorem 2.1]{JO2015}
\label{thm:lower bound}
Let $T$ be a tensor in $A\otimes B \otimes C$ and
$0 < p < n$ be an integer. If $\varphi_T^p$
is the linear map defined in \eqref{eqn:wedge}, then
\[
\operatorname{brank}(T)  \ge \operatorname{rank} \varphi_T^p
\Big/ \binom{n-1}{p}.
\]
\end{theorem}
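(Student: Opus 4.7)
The plan is to establish the bound in two steps: first prove the corresponding inequality $\rank(T) \ge \rank \varphi_T^p / \binom{n-1}{p}$ for ordinary cp rank via subadditivity on a rank-one decomposition, and then promote it to border rank using Zariski lower semicontinuity of matrix rank.

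The first step is to compute $\rank \varphi_T^p$ for a rank-one tensor $T = a \otimes b \otimes c$. Plugging into the defining formula, one sees that
\[
\varphi_T^p\bigl( (b_1 \wedge \cdots \wedge b_p) \otimes \alpha \bigr) \;=\; \alpha(a) \cdot (b_1 \wedge \cdots \wedge b_p \wedge b) \otimes c,
\]
so the image of $\varphi_T^p$ is contained in $\bigl\{ (\omega \wedge b) \otimes c : \omega \in \bigwedge^p B \bigr\}$. Hence $\rank \varphi_T^p$ is at most the rank of the wedge-multiplication map $\wedge b \colon \bigwedge^p B \to \bigwedge^{p+1} B$. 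When $b \neq 0$, a standard Koszul-complex argument (complete $b$ to a basis and inspect which basis wedges survive) shows that this kernel has dimension $\binom{n-1}{p-1}$ and thus the map has rank exactly $\binom{n-1}{p}$. So $\rank \varphi_T^p \le \binom{n-1}{p}$ on rank-one tensors.

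Next, since $T \mapsto \varphi_T^p$ is linear in $T$, matrix rank is subadditive: if $T = \sum_{i=1}^r T_i$ with each $T_i$ of rank one, then
\[
\rank \varphi_T^p \;\le\; \sum_{i=1}^r \rank \varphi_{T_i}^p \;\le\; r \binom{n-1}{p}.
\]
Choosing $r = \rank(T)$ gives $\rank(T) \ge \rank \varphi_T^p / \binom{n-1}{p}$. To upgrade to border rank, observe that the condition $\rank \varphi_T^p \le N$ is cut out by the vanishing of the $(N+1)\times(N+1)$ minors of a matrix whose entries depend linearly on the entries of $T$, hence is Zariski closed in $T$. So if $T = \lim_{k \to \infty} T_k$ with $\rank(T_k) \le r$, each $T_k$ satisfies $\rank \varphi_{T_k}^p \le r \binom{n-1}{p}$, and the closed condition passes to the limit, yielding $\rank \varphi_T^p \le r\binom{n-1}{p}$, i.e., $\brank(T) \ge \rank \varphi_T^p / \binom{n-1}{p}$.

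The only non-routine step is the rank computation for $\wedge b$, and the main obstacle is verifying it cleanly: one must either invoke exactness of the Koszul complex on the vector $b$ or, more elementarily, choose a basis $\{b = e_1, e_2, \ldots, e_n\}$ and note that a basis of $\ker(\wedge b)$ in $\bigwedge^p B$ consists exactly of the wedges containing $e_1$, whose count $\binom{n-1}{p-1}$ complements $\binom{n-1}{p}$ in $\binom{n}{p}$. Everything else—linearity of $T \mapsto \varphi_T^p$, subadditivity of matrix rank, and Zariski closedness of bounded-rank loci—is standard.
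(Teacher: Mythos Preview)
The paper does not supply its own proof of this statement; it is quoted verbatim from \cite[Theorem~2.1]{JO2015} and used as a black box. Your argument is correct and is essentially the standard proof one finds in that reference (and in \cite[\S3.8]{Landsberg2012}): bound $\rank\varphi_T^p$ by $\binom{n-1}{p}$ on rank-one tensors via the rank of the wedge map $\wedge b$, extend to arbitrary $T$ by linearity and subadditivity of matrix rank, and then pass to border rank by Zariski closedness of the bounded-rank locus.
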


Theorem \ref{thm:lower bound} can be used to obtain Strassen equations
for bounded border rank tensors,
see \cite[p.~81]{Landsberg2012} and \cite[Theorem 3.2]{O2007}.
For instance, when $\dim A = \dim B = \dim C  = 2$ and $p =1$,
a tensor $T \in A \otimes B \otimes C$ can be written as
$T = \sum_{i,j,k=1}^2 t_{ijk} a_i \otimes b_j \otimes c_k$. Hence,
$
\varphi_T(\alpha_i) = \sum_{j,k=1}^{2,2} t_{ijk} b_j \otimes c_k.
$
This implies that $\varphi_T^p : B \otimes A^\ast \to \bigwedge^2 B \otimes C$
is defined as
\[
\varphi_T^p(b \otimes \alpha_i) = \sum_{j,k=1}^{2,2} t_{ijk}
(b_j \wedge b)  \otimes c_k.
\]
It corresponds to the $4 \times 2$ matrix
\[
M_T^p  \coloneqq
\arraycolsep=2.5pt
\kbordermatrix{
& (b_1 \wedge b_2)\otimes c_1 & (b_1 \wedge b_2)\otimes c_2 \\
b_1 \otimes \alpha_1 & -t_{121} & -t_{122}  \\
b_1 \otimes \alpha_2 & -t_{221} & -t_{222}   \\
b_2 \otimes \alpha_1 &  t_{111} & t_{112} \\
b_2 \otimes \alpha_2 & t_{211} &  t_{212} \\
}
\]
The rows of $M_T^p$ are indexed by the basis vectors
$b_1 \otimes \alpha_1, b_1 \otimes \alpha_2, b_2 \otimes \alpha_1, b_2 \otimes \alpha_2$
of $B \otimes A^\ast$ and whose columns are indexed by the basis vectors
$(b_1 \wedge b_2) \otimes c_1, (b_1\wedge b_2) \otimes c_2$ of
$\big( \bigwedge^2 B \big) \otimes C$. For readers' convenience, we label rows and columns with their corresponding bases, respectively. In the literature, the linear map $\varphi_T^p$
defined as in \eqref{eqn:wedge} is called a Young flattening or a Koszul flattening
of the tensor $T$. We refer to
\cite[p.~81]{Landsberg2012} and \cite[Example 4.2]{OedOtt13}.

\subsection{Symmetric ranks of binary forms}

A binary form $h(x,y)$ is a homogeneous polynomial in two variables $x,y$.
Every binary form of degree $d$ can be regarded as a
symmetric binary tensor of order $d$,
so $\operatorname{S}^d(\mathbb{C}^2) \simeq \mathbb{C}^{d+1}$.
The symmetric rank of the symmetric tensor represented by $h(x,y)$
is also called the symmetric rank of $h(x,y)$.
We can write $h(x,y) = \sum_{i=0}^d \binom{d}{i} h_i x^i y^{d-i}$.
For convenience, we denote
\[
h := (h_0, h_1, \ldots, h_d).
\]
For $ 0 \leq r \leq d$, we denote the Hankel matrix
\be \label{eqn:definition of C_{d-r,r}}
C_{d-r, r}(h) := \bbm
h_0  &  h_1  &  \cdots &  h_r \\
h_1  &  h_2  &  \cdots &  h_{r+1}  \\
\vdots & \vdots & \ddots & \vdots \\
h_{d-r} &  h_{d-r+1}  &  \cdots &  h_{d}  \\
\ebm\in \mathbb{C}^{(d-r+1) \times (r+1)}.
\ee

The symmetric rank $\rank_S(h)$ of $h(x,y)$
can be determined as follows.

\begin{theorem} \cite[Theorem~11]{CS2011}
\label{thm: symmetric rank}
Let $h(x,y) = \sum_{i=0}^d h_i \binom{d}{i} x^i y^{d-i}$
be a binary form of degree $d$.
Then, we have the following:
\begin{itemize}
\item The symmetric border rank of $h$
is
\be
r \, := \, \rank \, C_{\lceil \frac{d}{2} \rceil, \lfloor \frac{d}{2} \rfloor }(h).
\ee

\item If $d$ is even and $r=d/2+1$,
then $\rank_S(h) = r$.

\item If $d$ is odd, or if $r<d/2+1$,
then $\rank_S (h) = r$ or $d- r + 2$
which can be determined as follows: let $ (f_0, f_1, \ldots, f_r) \ne 0$
be a vector from $\ker C_{d-r,r}(h)$,
which is unique up to scaling. If the polynomial
\[
f(x,y) \, :=\, f_0 x^r + f_1 x^{r-1}y + \cdots + f_r y^r
\]
has no multiple roots, then $\rank_S(h) = r$;
otherwise, $\rank_S(h) = d-r+2$.

%
%
%
%

\end{itemize}
\end{theorem}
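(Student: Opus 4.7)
The plan is to prove this via Sylvester's apolarity theorem for binary forms. Under the apolar pairing, a form $f(x,y) = \sum_{i=0}^s \binom{s}{i} f_i x^i y^{s-i}$ lies in the apolar ideal $h^\perp \subseteq \mathbb{C}[x,y]$ precisely when its coefficient vector $(f_0,\dots,f_s)$ lies in $\ker C_{d-s,s}(h)$. Sylvester's theorem then asserts that $\rank_S(h)$ equals the smallest $s$ for which $h^\perp$ contains a degree-$s$ form with $s$ distinct roots in $\mathbb{P}^1$, while $\brank_S(h)$ equals the smallest $s$ for which $h^\perp$ contains \emph{any} nonzero degree-$s$ form. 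The border-rank formula $\brank_S(h) = r := \rank C_{\lceil d/2 \rceil,\lfloor d/2 \rfloor}(h)$ then drops out, because the maximum possible rank $\min(s+1,d-s+1)$ of $C_{d-s,s}(h)$ is tightest at the balanced index, so the first $s$ at which $\ker C_{d-s,s}(h)$ becomes nontrivial equals the rank of the balanced catalecticant.

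The key structural input for the symmetric rank is Macaulay's theorem for binary forms: the apolar ideal $h^\perp$ is a complete intersection generated by two coprime forms $f_1,f_2$ of degrees $r$ and $d-r+2$, respectively. I would split into two cases accordingly. If $d$ is even and $r = d/2+1$, both generators sit in degree $r$, so $\dim (h^\perp)_r = 2$. In this pencil the repeated-root locus is cut out by the discriminant and is a proper subvariety, since coprimality of $f_1,f_2$ prevents every form in the pencil from sharing a common root. A generic pencil member therefore has distinct roots, giving $\rank_S(h) \leq r$, and the border rank provides the matching lower bound.

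If $d$ is odd or $r < d/2+1$, then $C_{d-r,r}(h)$ is non-square of rank $r$, so $\dim (h^\perp)_r = 1$, spanned by a unique form $f = f_1$ up to scalar. When $f$ has distinct roots, Sylvester gives $\rank_S(h) = r$ immediately. When $f$ has a multiple root, the complete-intersection structure and a Hilbert-function count force $(h^\perp)_s = f\cdot\mathbb{C}[x,y]_{s-r}$ for every $r \leq s \leq d-r+1$; each such apolar form is divisible by $f$ and hence inherits its multiple root, so $\rank_S(h) \geq d-r+2$. At $s = d-r+2$ the second Macaulay generator $f_2$ enters and $\dim (h^\perp)_s \geq 2$, and a pencil-plus-discriminant argument identical to the even case produces a degree-$(d-r+2)$ apolar form with distinct roots, yielding $\rank_S(h) = d-r+2$.

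The main obstacle will be making the pencil/discriminant step fully rigorous, particularly in the second half of the odd/unbalanced case: one must verify that the repeated-root locus in the pencil at degree $d-r+2$ is a \emph{proper} subvariety, which amounts to showing that $f_2$ does not share every root of $f$ (including its multiple root). This again traces back to coprimality of the Macaulay generators $f_1,f_2$. Once this pencil-theoretic step is in hand, the remainder is bookkeeping with the Hilbert function of $\mathbb{C}[x,y]/h^\perp$ and comparing the degrees $r$ and $d-r+2$ directly.
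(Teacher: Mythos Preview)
The paper does not give its own proof of this statement: it is quoted verbatim as \cite[Theorem~11]{CS2011} and used throughout as a black box (together with Sylvester's Theorem~\ref{thm:Sylvester}, also only cited). So there is nothing in the paper to compare your argument against.

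That said, your outline is essentially the standard route to this result---apolarity, Macaulay's structure theorem that $h^\perp\subset\mathbb{C}[x,y]$ is a complete intersection $(f_1,f_2)$ with $\deg f_1=r$ and $\deg f_2=d-r+2$, and a discriminant/pencil argument to produce a squarefree apolar form---and matches what one finds in Comas--Seiguer and in Iarrobino--Kanev \cite{IarKan99} or Bernardi--Gimigliano--Id\`a \cite{BerGimIda11}. One point to tighten: at degree $d-r+2$ in the ``multiple root'' branch, simply having $\dim (h^\perp)_{d-r+2}\ge 2$ is not enough to run the pencil argument---you need the two chosen forms to be \emph{coprime} so the associated map $\mathbb{P}^1\to\mathbb{P}^1$ is base-point-free. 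The fix is easy: take $f_2$ together with $f_1\cdot m$ for some $m$ of degree $d-2r+2$ whose linear factors avoid the roots of $f_2$; then $\gcd(f_2,f_1 m)=1$ by the coprimality of the Macaulay generators, and the generic member of this specific pencil is squarefree. With that adjustment your sketch goes through.
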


Once we know the symmetric rank $k:=\rank_S(h)$,
Sylvester's method can be applied to compute
the Waring decomposition of the binary form $h(x,y)$.
By the above theorem, either $k=r$ or $k = d-r+2$.
Select a generic vector
$0 \ne (g_0, g_1, \ldots, g_k) \in \ker C_{d-k,k}(h)$.
Then, the binary form
\be \label{def:g(x,y)}
g(x,y) \, := \, g_0 x^k + g_1 x^{k-1}y + \cdots + g_k y^k
\ee
has $k$ distinct complex roots, say,
$(a_1, b_1), \ldots, (a_k, b_k)$,
in the projective space $\mathbb{P}^1$. Moreover,
there exist scalars $\lmd_1, \ldots, \lmd_k$ satisfying
\[
h(x,y) = \lmd_1 (a_1 x + b_1 y)^d + \cdots + \lmd_k (a_k x + b_k y)^d.
\]
The above is justified by the following theorem of Sylvester.

\begin{theorem}[\cite{Sylvester1851-1,Sylvester1851-2}]
\label{thm:Sylvester}
A binary form $h(x,y) = \sum_{i=0}^d h_i \binom{d}{i} x^i y^{d-i}$
of degree $d$ has the decomposition
$
h (x,y) = \sum_{i=1}^k \lambda_i (a_i x + b_i y)^d
$
for $\lambda_1, \ldots, \lmd_k \ne 0$ and
$(a_1, b_1), \dots, (a_k, b_k) \in \cpx^2$
pairwise linearly independent if and only if there exists
$ 0  \ne (g_0, g_1, \ldots, g_k) \in \ker  C_{d-k,k}(h)$
such that the binary form $g(x,y)$ as in \reff{def:g(x,y)}
has $(a_1, b_1), \dots, (a_k, b_k)$ as complex roots in $\mathbb{P}^1$.
%
%
\end{theorem}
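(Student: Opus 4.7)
The plan is to translate the entire statement into linear algebra on the coefficient vector $h=(h_0,\ldots,h_d)$.  Expanding $(ax+by)^d=\sum_{l=0}^d\binom{d}{l}a^lb^{d-l}x^ly^{d-l}$ and comparing with the normalization $h(x,y)=\sum_l\binom{d}{l}h_l x^l y^{d-l}$, the decomposition $h=\sum_{i=1}^k\lambda_i(a_ix+b_iy)^d$ is equivalent to the system of $d+1$ scalar identities
\[
h_l \;=\; \sum_{i=1}^k \lambda_i \, a_i^l b_i^{d-l}, \qquad l=0,1,\ldots,d.
\]
Simultaneously, reading off the rows of the Hankel matrix, the condition $(g_0,\ldots,g_k)\in\ker C_{d-k,k}(h)$ is exactly the constant-coefficient linear recurrence $\sum_{j=0}^k g_j h_{m+j}=0$ for $m=0,\ldots,d-k$, whose characteristic form (up to the usual swap of variables) is $g$.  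The theorem will follow by matching these two linear-algebraic descriptions of the sequence $(h_l)$.

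For the forward implication, assume the decomposition holds.  Let $g$ be the (up to scalar) unique degree-$k$ binary form that vanishes on the $k$ pairwise linearly independent projective points $[a_i:b_i]$.  Substituting $h_{m+j}=\sum_i\lambda_i a_i^{m+j}b_i^{d-m-j}$ into $\sum_{j=0}^k g_j h_{m+j}$, swapping the two sums, and pulling out the factor $\lambda_i a_i^m b_i^{d-m-k}$ reduces the inner sum to the evaluation of $g$ at each $(a_i,b_i)$, which vanishes; hence $g\in\ker C_{d-k,k}(h)$.  For the converse, assume $g\in\ker C_{d-k,k}(h)$ factors with $k$ distinct projective roots $[a_i:b_i]$.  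The kernel relation then says the length-$(d+1)$ sequence $(h_l)$ is annihilated by the order-$k$ recurrence with characteristic form $g$; since $g$ has $k$ distinct roots, the full solution space of this recurrence is $k$-dimensional and spanned by the geometric sequences $l\mapsto a_i^l b_i^{d-l}$ (linear independence by a Vandermonde argument, valid because $d\ge k$).  Thus there exist scalars $\lambda_i$ with $h_l=\sum_i\lambda_i a_i^l b_i^{d-l}$ for every $l$, which re-exponentiates to $h(x,y)=\sum_i\lambda_i(a_ix+b_iy)^d$; discarding any indices with $\lambda_i=0$ gives the claimed nondegenerate decomposition.

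The one delicate point is bookkeeping between the two natural ways to associate a projective point to $g$---as a root $g(a,b)=0$ versus as a linear factor $bx-ay$ of $g$---which differ by the involution $(a,b)\leftrightarrow(b,-a)$.  The paper's normalization of $g(x,y)=g_0x^k+\cdots+g_k y^k$ and of $(a_ix+b_iy)^d$ has this swap implicit, so one must track it carefully through the substitution above (it is harmless since ``pairwise linearly independent in $\mathbb{P}^1$'' is preserved by the involution).  Beyond that purely notational matter, the only real content is the Vandermonde-style dimension count for the solution space of the recurrence, which is exactly where the hypothesis of $k$ distinct roots is used; in the presence of multiple roots the forward direction still works but the converse degenerates, and this is precisely the complementary $\rank_S(h)=d-r+2$ alternative of Theorem~\ref{thm: symmetric rank}.
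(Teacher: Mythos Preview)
The paper does not supply its own proof of this theorem: it is stated as a classical result and attributed to Sylvester's original 1851 papers, with no argument given. Your write-up is essentially the standard proof via linear recurrences (equivalently, the apolarity lemma in the binary case), and it is correct.

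A couple of small comments. First, your identification of the ``delicate point'' is accurate but slightly misidentified: substituting $h_{m+j}=\sum_i\lambda_i a_i^{m+j}b_i^{d-m-j}$ into $\sum_j g_j h_{m+j}$ yields the factor $\sum_j g_j a_i^{j}b_i^{k-j}=g(b_i,a_i)$, so the relevant involution on $\mathbb{P}^1$ is the swap $[a:b]\leftrightarrow[b:a]$ rather than $[a:b]\leftrightarrow[b:-a]$. This is purely notational and does not affect the argument; it is in fact reflected in the paper's own Step~5 of Algorithm~\ref{algorithm: 1}, where $-b_i$ appears when passing from the binary-form decomposition back to the Vandermonde decomposition of $\mathcal{H}$. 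Second, in the converse direction you implicitly use that the $(d-k+1)\times(d+1)$ Toeplitz matrix of the recurrence has full row rank whenever $g\neq 0$, so that the solution space has dimension exactly $k$; this is immediate but worth stating, since it is what forces the geometric sequences to span rather than merely lie in the kernel. Finally, note that the converse as you prove it only guarantees the existence of \emph{some} scalars $\lambda_i$, possibly zero; the theorem as stated asks for all $\lambda_i\neq 0$, so strictly speaking the biconditional holds with the $(a_i,b_i)$ interpreted as a superset of the points actually needed. This is the usual harmless slack in statements of Sylvester's theorem.
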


\section{Vandermonde ranks and decompositions}
\label{sec:Vdcmp}
%
%
%
%
Although Hankel tensors are seemly very special symmetric tensors, every low rank symmetric tensor can be written as a Hankel tensor under a change of coordinate. More precisely, we have the following:
\begin{proposition}
For a generic symmetric tensor
$\mathcal{S} \in \operatorname{S}^m (\mathbb{C}^n)$ such that
$\operatorname{rank}_S(\mathcal{S}) \le n+2$,
there exists an $n\times n$ invertible matrix $G$ such that
\[
G \cdot \mathcal{S}\in \operatorname{H}^m(\mathbb{C}^n)~\text{and}~\rank_S(\mathcal{S})
= \rank_V (G \cdot \mathcal{S}),
\]
where $\cdot$ is the diagonal action.\footnote{
The diagonal action $G \cdot \mathcal{S}$ is defined as:
if $\mathcal{S} = \sum_i (u_i)^{\otimes m}$ is a decomposition, then
$G \cdot \mathcal{S} = \sum_i (Gu_i)^{\otimes m}$.
}
%
%
In particular, if $\mH\in \operatorname{H}^m(\mathbb{C}^n)$ is generic\footnote{$\mH$
is a generic element in the set of all Hankel tensors with $\rank_S(\mH)\le n+2$.}
such that $\rank_S(\mH)\le n+2$, then we must have $\rank_V(\mH) = \rank_S(\mH)$.
\end{proposition}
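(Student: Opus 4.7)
The plan is to reduce the statement to the classical projective-geometry fact that any $r \le n+2$ points in general linear position in $\mathbb{P}^{n-1}$ lie on some rational normal curve of degree $n-1$ (uniquely, when $r = n+2$), and that any two such rational normal curves are equivalent under $\operatorname{PGL}_n(\cpx)$. The relevance of this to our setting is that the Vandermonde decomposition \eqref{Vdcmp:H:aibi} is exactly a symmetric decomposition whose rank-one summands are supported on the standard rational normal curve $C_0 = \{[s^{n-1}: s^{n-2}t:\cdots:t^{n-1}]:[s:t]\in \mathbb{P}^1\}$.

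First I would take a symmetric rank decomposition $\mathcal{S} = \sum_{i=1}^{r} u_i^{\otimes m}$ with $r = \rank_S(\mathcal{S}) \le n+2$. Genericity of $\mathcal{S}$ ensures the projective points $[u_1], \dots, [u_r] \in \mathbb{P}^{n-1}$ are in general linear position, so some rational normal curve $C$ passes through all of them, and I pick $G \in \GL_n(\cpx)$ whose projectivization sends $C$ to $C_0$. Then each $Gu_i$ lies in the affine cone of $C_0$, so $Gu_i = \mu_i (a_i^{n-1}, a_i^{n-2}b_i, \dots, b_i^{n-1})$ for some $\mu_i \in \cpx^\ast$; the parametrization being homogeneous of degree $n-1$, I absorb $\mu_i$ by rescaling $(a_i,b_i) \mapsto (\mu_i^{1/(n-1)} a_i, \mu_i^{1/(n-1)} b_i)$. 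Thus $G \cdot \mathcal{S} = \sum_i (Gu_i)^{\otimes m}$ acquires a Vandermonde decomposition, which gives $G \cdot \mathcal{S} \in \H^m(\cpx^n)$ and $\rank_V(G \cdot \mathcal{S}) \le r$. Since the diagonal $\GL_n$-action on $\S^m(\cpx^n)$ preserves symmetric rank, $\rank_S(G \cdot \mathcal{S}) = r$, and \eqref{rl:b<r<s<v} forces $\rank_V(G \cdot \mathcal{S}) = r = \rank_S(\mathcal{S})$.

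For the ``in particular'' part, the same construction applies when $\mathcal{S} = \mH$ is a generic Hankel tensor with $\rank_S(\mH) \le n+2$, since the only prerequisite is that the $[u_i]$ in some symmetric rank decomposition of $\mH$ are in general linear position, which remains a generic condition on $\mH$. To upgrade the conclusion from $G \cdot \mH$ to $\mH$ itself I would compare dimensions: the first part implies that the orbit $\GL_n(\cpx) \cdot \{\mH \in \H^m(\cpx^n) : \rank_V(\mH) \le r\}$ is dense in $\sigma_r(v_m(\mathbb{P}^{n-1}))$, and the subgroup of $\GL_n(\cpx)$ that preserves the linear subspace $\H^m(\cpx^n)$ is the image of $\GL_2(\cpx)$ under the $(n-1)$st symmetric power representation (and hence also preserves Vandermonde rank). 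Quotienting by this stabilizer equates $\dim\{\mH : \rank_V(\mH) \le r\}$ and $\dim\{\mH : \rank_S(\mH) \le r\}$ inside $\H^m(\cpx^n)$; as the former is an irreducible subset of the latter, they coincide, forcing $\rank_V(\mH) = \rank_S(\mH)$ for generic Hankel $\mH$.

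The main obstacle is the classical input that $n+2$ general points of $\mathbb{P}^{n-1}$ determine a unique rational normal curve (with at least one such curve through any fewer general points) and that all rational normal curves of the same degree are $\operatorname{PGL}$-equivalent; these facts are standard but must be cited or justified carefully. A secondary care point is the dimension count used for the ``in particular'' clause, where one must identify precisely the stabilizer of $\H^m(\cpx^n)$ in $\GL_n(\cpx)$ and verify that the secant and orbit dimensions align so that the inclusion of the Vandermonde-rank locus into the symmetric-rank locus inside $\H^m(\cpx^n)$ is dimension-preserving.
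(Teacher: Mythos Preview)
Your argument for the first part is essentially the paper's proof: take a symmetric rank decomposition, observe that genericity puts the points $[u_1],\dots,[u_r]$ in general position, invoke the classical fact (which the paper cites as \cite[Theorem~1.18]{Harris1992}) that $r\le n+2$ general points lie on a rational normal curve projectively equivalent to the standard one, and transport by the corresponding $G\in\GL_n(\cpx)$. Your additional care in absorbing the scalar $\mu_i$ and in concluding $\rank_V(G\cdot\mathcal S)=r$ via \eqref{rl:b<r<s<v} is correct and fills in details the paper leaves implicit.

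For the ``in particular'' clause, the paper writes only that it ``follows easily from the first part,'' with no further argument. Your dimension-count is therefore going beyond what the paper supplies, which is reasonable since the passage from $G\cdot\mH$ back to $\mH$ is not as immediate as the paper suggests. However, your key step---``quotienting by this stabilizer equates $\dim\{\mH:\rank_V(\mH)\le r\}$ and $\dim\{\mH:\rank_S(\mH)\le r\}$ inside $\H^m(\cpx^n)$''---is not clearly justified as written. The fibers of the two orbit maps $\GL_n\times V_r\to\overline{\sigma_r}$ and $\GL_n\times S_r\to\overline{\sigma_r}$ need not coincide a priori: the first fiber over a generic $\mathcal S$ consists of those $G$ sending the decomposition points onto $C_0$, while the second consists of those $G$ with $G^{-1}\cdot\mathcal S\in\H^m(\cpx^n)$, and showing these agree is essentially the statement you are trying to prove. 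To make your argument work you would need an independent reason why $\dim\{G:G\cdot\mathcal S\text{ is Hankel}\}=4$ for generic $\mathcal S\in\sigma_r$ (e.g.\ that two $\GL_n$-equivalent generic Hankel tensors are already equivalent under the image of $\GL_2$), which requires a separate argument.
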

\begin{proof}
Since $\mathcal{S}$ has symmetric rank at most $(n+2)$, we can write $\mathcal{S}$ as
\[
\mathcal{S} = \sum_{j=1}^{r} u_j^{\otimes m},\quad r\le n+2.
\]
Since $\mathcal{S}$ is generic, we may assume that $u_1,\dots, u_{r}$
are in general position, i.e., any $k$ of them span
a linear subspace of dimension $\min\{k,n\}$ for each $k=1,\dots, r$.
By \cite[Theorem 1.18]{Harris1992}, there is a unique curve
$C\subset \mathbb{P}^{n-1}$ passing through $[u_1],\dots, [u_{n+2}]\in \mathbb{P}^{n-1}$,
which is projectively equivalent\footnote{
Two projective varieties $X,Y\subseteq \mathbb{P}^{n-1}$ are \emph{projectively equivalent}
if there exists an invertible $n \times n$ matrix $G$ such that $Y = G X$.}
to the rational normal curve $v_n(\mathbb{P}^1)$.
Therefore there exists an $n\times n$ invertible matrix $G$ such that
\[
G u_j  = (a_j^{n-1}, a_j^{n-2}b_j,\dots, a_j b_j^{n-2}, b_j^{n-1}),\quad (a_j,b_j) \in \mathbb{C}^2,\quad 1\le j \le n+2
\]
and hence $G \cdot \mathcal{S} \in \operatorname{H}^m(\mathbb{C}^n)$.
The second part follows easily from the first part.
\end{proof}

For a Hankel tensor $\mH \in \H^m( \cpx^n )$, let
$d = (n-1)m$ and $h = (h_0, h_1, \ldots, h_{d} )$ be the vector
as in \reff{eqn:defition of h}. We can think of $h$ as the
symmetric binary tensor in $\S^d (\cpx^2)$
that is represented by the binary form
\begin{equation}\label{eqn:binary}
h(x,y):= \sum_{i=0}^d h_i \binom{d}{i} x^i y^{d-i}.
\end{equation}
By writing $\rank_S(h)$ (resp., $\brank_S(h)$),
we mean the symmetric rank (resp., the symmetric border rank) of
the tensor represented by $h(x,y)$.
There is a one-to-one linear map between Hankel tensors and binary forms,
which is given as:
\begin{equation}\label{eqn:tensor-binary form}
\pi: \Hank^m(\mathbb{C}^n) \to \operatorname{S}^d(\mathbb{C}^2),
\quad  \mathcal{H} \mapsto h.
\end{equation}
In the above, $h$ is determined by $\mH$ as in \reff{eqn:defition of h}.
Clearly, the map $\pi$ is a bijection between
$\Hank^m(\mathbb{C}^n)$ and $\operatorname{S}^d(\mathbb{C}^2)$,
and $\rank_S(h) = 1$ if and only if $\rank_V (\mH)=1$.
A Vandermonde rank decomposition of $\mH$ is equivalent to
a symmetric rank decomposition of $h$.
We have the following basic lemma.

\begin{lemma}\label{lemma: V-rank = S-rank}
Let $\pi,\mH, h$ be as above. Then,
for all $\mH \in \Hank^m(\mathbb{C}^n)$, it holds that
\[
\rank_V(\mH) = \rank_S( h ),\quad
\brank_V(\mH) = \brank_S(h).
\]
\end{lemma}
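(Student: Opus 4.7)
The plan is to exploit the fact that $\pi$ is a linear isomorphism that sends rank-one Vandermonde tensors precisely to $d$-th powers of linear forms, so that it matches Vandermonde decompositions of $\mH$ bijectively with Waring decompositions of $h$. The whole lemma then reduces to checking this correspondence on the generators and invoking continuity for the border-rank statement.

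First, I would compute $\pi$ on a generic rank-one Vandermonde summand. Let $v = (a^{n-1}, a^{n-2}b, \ldots, a b^{n-2}, b^{n-1}) \in \cpx^n$. For any index tuple $(i_1,\ldots,i_m)$ with $1 \le i_j \le n$, one has
\[
(v^{\otimes m})_{i_1 \ldots i_m} \;=\; \prod_{j=1}^m a^{\,n-i_j} b^{\,i_j-1} \;=\; a^{\,d - s}\, b^{\,s},
\qquad s := (i_1 + \cdots + i_m) - m,
\]
where $d = (n-1)m$. This depends only on $s$, confirming that $v^{\otimes m}$ is Hankel with generating vector $h_s = a^{d-s} b^s$. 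Plugging into \reff{eqn:binary}, the corresponding binary form is
\[
\pi(v^{\otimes m})(x,y) \;=\; \sum_{s=0}^d \binom{d}{s} a^{d-s} b^s x^s y^{d-s} \;=\; (bx + ay)^d,
\]
which is a pure $d$-th power of a linear form (with the roles of $a$ and $b$ swapped relative to the form $(ax+by)^d$ of \eqref{eqn: decomposition of h}; this is just a harmless relabeling).

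Next, using that $\pi$ is a linear bijection, any Vandermonde decomposition $\mH = \sum_{i=1}^r (a_i^{n-1},\ldots,b_i^{n-1})^{\otimes m}$ maps to the Waring decomposition $\pi(\mH)(x,y) = \sum_{i=1}^r (b_i x + a_i y)^d$, and conversely any Waring decomposition of $h$ pulls back via $\pi^{-1}$ to a Vandermonde decomposition of $\mH$ of the same length. Taking minima over $r$ in both directions yields $\rank_V(\mH) = \rank_S(h)$.

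For the border-rank equality, I would invoke continuity. Since $\pi\colon \Hank^m(\cpx^n)\to \operatorname{S}^d(\cpx^2)$ is a linear isomorphism between finite-dimensional spaces, both $\pi$ and $\pi^{-1}$ are continuous. If $\mH = \lim_{p\to\infty} \mH^{(p)}$ with each $\mH^{(p)}$ having $V$-rank at most $r$, then $h = \pi(\mH) = \lim_p \pi(\mH^{(p)})$, and by the already-proved equality the forms $\pi(\mH^{(p)})$ have symmetric rank at most $r$, so $\brank_S(h) \le \brank_V(\mH)$. The reverse inequality is symmetric via $\pi^{-1}$, giving $\brank_V(\mH) = \brank_S(h)$.

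The only real technical step is the index bookkeeping in the computation of $\pi(v^{\otimes m})$; everything else is formal once $\pi$ is recognized as a linear isomorphism that identifies the two families of rank-one objects. There is no conceptual obstacle.
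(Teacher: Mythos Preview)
Your proof is correct and follows exactly the same route as the paper: use that $\pi$ is a linear isomorphism which bijects rank-one Vandermonde tensors with $d$-th powers of linear forms to get the rank equality, and then invoke continuity of $\pi$ and $\pi^{-1}$ for the border-rank equality. The only difference is that you spell out the index computation showing $\pi(v^{\otimes m}) = (bx+ay)^d$ explicitly, whereas the paper merely asserts that $\pi$ is a bijection between rank-one tensors.
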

\begin{proof}
%
%
We notice that $\pi$ is actually an isomorphism between two vector spaces. This implies in particular that $\pi$ is continuous if we equip both vector spaces the Euclidean topology. Hence it is sufficient to prove $\rank_V(\mH) = \rank_S(h)$ and the border version follows from the continuity of $\pi$. To this end, since $\pi$ is a bijective correspondence between rank one tensors, $\rank_V(\mH) = \rank_S(h)$ follows from the linearity of $\pi$.
\end{proof}

Lemma~\ref{lemma: V-rank = S-rank} establishes the equivalence
between Vandermonde decompositions of Hankel tensors
and symmetric decompositions of binary forms. Therefore,
all results about binary forms can be applied directly to
Hankel tensors via the map $\pi$. When a Hankel tensor $\mH$ is generic,
the symmetric binary tensor $h$ is also generic.
By Theorems~\ref{thm: symmetric rank}, \ref{thm:Sylvester}
and Lemma~\ref{lemma: V-rank = S-rank},
we can get the following corollary about
Vandermonde ranks of generic Hankel tensors.

\begin{corollary} \label{thm: V-rank}
If $\mathcal{H}\in \Hank^m(\mathbb{C}^n)$ is generic, then
\[
\rank_V(\mH)  \, = \, \left\lceil \frac{(n-1)m+1}{2} \right\rceil.
\]
\end{corollary}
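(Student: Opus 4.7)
The plan is to reduce the claim to the known symmetric rank of a generic binary form, then invoke Theorem~\ref{thm: symmetric rank}. By Lemma~\ref{lemma: V-rank = S-rank} we have $\rank_V(\mH) = \rank_S(h)$, where $h \in \S^d(\cpx^2)$ is the binary form \reff{eqn:binary} of degree $d = (n-1)m$. Since $\pi$ from \reff{eqn:tensor-binary form} is a linear isomorphism between vector spaces of the same dimension $d+1$, a generic $\mH \in \Hank^m(\cpx^n)$ corresponds to a generic $h \in \S^d(\cpx^2)$. So it suffices to show that a generic binary form of degree $d$ has symmetric rank $\lceil (d+1)/2 \rceil$.

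First I would handle the case $d$ even. The Hankel matrix $C_{d/2,d/2}(h)$ is square of size $(d/2+1)\times(d/2+1)$, and for generic $h$ its determinant (a polynomial in the entries of $h$) is nonzero, so the matrix has full rank $r = d/2 + 1$. Theorem~\ref{thm: symmetric rank} then directly gives $\rank_S(h) = r = d/2 + 1 = \lceil (d+1)/2 \rceil$.

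Next I would handle $d$ odd. Here $C_{\lceil d/2\rceil, \lfloor d/2\rfloor}(h)$ has size $((d+3)/2)\times((d+1)/2)$, and for generic $h$ its maximal minors are nonzero, so it has full column rank $r = (d+1)/2$. Since $r = (d+1)/2 < (d+2)/2 = d/2+1$, Theorem~\ref{thm: symmetric rank} says $\rank_S(h)$ is either $r$ or $d-r+2$, distinguished by whether the generator $f(x,y)$ of the one-dimensional kernel $\ker C_{d-r,r}(h)$ has distinct roots. The main (very mild) obstacle is verifying that generically $f$ has distinct roots: this is a Zariski-open condition on $h$, since the discriminant of $f$ depends polynomially on the entries of $h$ (via Cramer's rule for the kernel vector), and one only needs to exhibit one $h$ for which $f$ has distinct roots to conclude that the discriminant does not vanish identically. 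A convenient witness is $h(x,y) = \sum_{i=1}^{r}(a_i x + b_i y)^d$ for pairwise independent $(a_i,b_i)$, for which $f$ has exactly the roots $(a_i,b_i)$ by Sylvester's theorem~\ref{thm:Sylvester}. Hence for generic $h$, $\rank_S(h) = r = (d+1)/2 = \lceil (d+1)/2 \rceil$.

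Combining the two cases and applying Lemma~\ref{lemma: V-rank = S-rank} yields $\rank_V(\mH) = \lceil ((n-1)m+1)/2 \rceil$ for generic $\mH$, as required.
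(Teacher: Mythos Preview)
Your proof is correct and follows essentially the same approach as the paper: the paper states (without a separate proof environment) that when $\mH$ is generic the binary tensor $h$ is also generic, and then invokes Theorems~\ref{thm: symmetric rank}, \ref{thm:Sylvester} and Lemma~\ref{lemma: V-rank = S-rank}, which is precisely what you do in more detail. Your explicit treatment of the odd-$d$ case, with the witness $h=\sum_{i=1}^{r}(a_i x+b_i y)^d$ to show the discriminant condition is generically satisfied, simply spells out what the paper leaves implicit.
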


The symmetric rank and the decomposition of a binary form is determined in
Theorem~\ref{thm: symmetric rank}. Consequently, by Lemma~\ref{lemma: V-rank = S-rank},
we can determine the Vandermonde rank decomposition
of a Hankel tensor by the following algorithm.

\begin{algorithm}
\label{algorithm: 1}
\rm
(Vandermonde rank decompositions for Hankel tensors,
motivated from Sylvester's algorithm for binary tensors.)
For a given $\mathcal{H}\in \Hank^m(\mathbb{C}^n)$,
let $d=(n-1)m$ and $h$ be as in \eqref{eqn:defition of h}.
Do the following:

\begin{enumerate}

%
%

\item[Step~$1$:] Let $s = \lfloor d/2 \rfloor$ and form the matrix
$C_{d-s,s}(h)$ as in \eqref{eqn:definition of C_{d-r,r}}.

\item[Step~$2$:] Let $r := \rank \, C_{d-s,s}(h)$. Find
$0 \ne (f_0, f_1, \ldots, f_r) \in \ker C_{d-r,r}(h)$.
Set $f(x,y) := f_0 x^r + f_1 x^{r-1}y  + \cdots + f_r y^r$.

\item[Step~$3$:] If $r = d/2 + 1$ or $f(x,y)$
has no multiple roots, then $\rank_V(\mH) = \rank_S(h)=r$;
otherwise, $\rank_V(\mH) = \rank_S(h)=d-r+2$.
%
%

\item[Step~$4$:] Let $k:= \rank_V(\mH)$, which is either $r$ or
$d-r+2$. If $k = r$, compute the $r$ distinct roots
$(a_1, b_1), \ldots, (a_r, b_r)$ of the binary form $f(x,y)$.
If $k=d-r+2$, select a generic
$0 \ne (g_0, g_1, \ldots, g_k) \in \ker C_{d-k,k}$
and compute the $k$ distinct roots
$(a_1, b_1), \ldots, (a_k, b_k)$ of the binary form
$
g(x, y) := g_0 x^k + g_1 x^{k-1} y + \cdots + g_k y^k.
$

\item[Step~$5$:] Determine the scalars $\lmd_1, \ldots, \lmd_k$ such that
\[
\mH = \sum_{i=1}^k \lmd_i (a_i^{n-1}, a_i^{n-2}(-b_i), \ldots, (-b_i)^{n-1})^{\otimes m}.
\]
The above is equivalent to
$h(x,y) = \sum_{i=1}^k \lmd_i (a_ix + b_iy)^d$.

\end{enumerate}
\end{algorithm}
Algorithm~\ref{algorithm: 1} is mathematically equivalent to 
computing rank decompositions of binary symmetric tensors, 
through the map $\pi$ defined in 
\eqref{eqn:tensor-binary form} and Lemma~\ref{lemma: V-rank = S-rank}.
It is orginated from Sylvester's algorithm \cite{Sylvester1851-1,Sylvester1851-2}
(also see \cite[Algorithm~1.1]{BCMT10}) for the case of simple roots
and the results of Comas and Seuger \cite{CS2011} for the case of multiple roots
(also see \cite[Algorithms~1,2]{BerGimIda11}).
%
%
For symmetric tensors of generic rank or subgeneric rank,
the uniqueness of the rank decompositions
is studied in \cite{ChOtVan15} and \cite{GalMel}.
By Theorems~\ref{thm: symmetric rank} and \ref{thm:Sylvester}
and Lemma~\ref{lemma: V-rank = S-rank},
we can get similar uniqueness results
about Vandermonde rank decompositions.

\begin{theorem} \label{Vdcmp:unique}
Let $\mH \in \H^m(\cpx^n)$ be a Hankel tensor.
In Algorithm~\ref{algorithm: 1},
if the rank of $C_{d-s,s}(h)$ is less than $d/2+1$ and the binary form
$f(x,y)$ has no multiple roots,
then the Vandermonde rank decomposition of $\mH$ is unique.
\end{theorem}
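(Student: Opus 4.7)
The plan is to reduce the uniqueness statement for Hankel tensors to the corresponding uniqueness statement for binary forms via Lemma~\ref{lemma: V-rank = S-rank}. The map $\pi$ of \eqref{eqn:tensor-binary form} is a linear bijection carrying rank-one Vandermonde terms of $\mathcal{H}$ to $d$-th powers of linear forms in $h(x,y)$; hence length-$r$ Vandermonde decompositions of $\mathcal{H}$ correspond bijectively to length-$r$ Waring decompositions of $h$, and the two uniqueness statements are equivalent.

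Under the hypotheses, $r < d/2 + 1$ and $f$ has no multiple roots, so Theorem~\ref{thm: symmetric rank} gives $\rank_S(h) = r$. By Sylvester's theorem (Theorem~\ref{thm:Sylvester}), every Waring decomposition $h = \sum_{i=1}^r \lambda_i (a_i x + b_i y)^d$ with distinct projective points $[a_i:b_i]$ yields a nonzero vector in $\ker C_{d-r, r}(h)$ whose associated binary form has the $[a_i:b_i]$ as simple roots, and conversely any such kernel vector produces a length-$r$ Waring decomposition. Hence uniqueness reduces to showing $\dim \ker C_{d-r, r}(h) = 1$: if the kernel had dimension at least two, a small perturbation of $f$ inside the kernel would preserve the simple-root condition and yield a distinct decomposition of the same length.

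The crux is therefore the kernel-dimension computation. Starting from any length-$r$ Waring decomposition of $h$, one expands $C_{d-r, r}(h)_{j, k} = h_{j+k} = \sum_i \lambda_i a_i^{j+k} b_i^{d-j-k}$ to obtain the factorization
\[
C_{d-r, r}(h) \; = \; A^{\top} \, \Lambda \, B,
\]
where $\Lambda = \diag(\lambda_1, \ldots, \lambda_r)$ is invertible, and $A \in \cpx^{r \times (d-r+1)}$, $B \in \cpx^{r \times (r+1)}$ have $i$-th rows $(a_i^j b_i^{d-r-j})_{j=0}^{d-r}$ and $(a_i^k b_i^{r-k})_{k=0}^{r}$, respectively. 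The hypothesis $r < d/2 + 1$ forces $d-r+1 \ge r$, so $A$ and $B$ are generalized Vandermonde matrices at $r$ distinct projective points and therefore have full row rank $r$. This gives $\rank C_{d-r, r}(h) = r$, hence $\dim \ker C_{d-r, r}(h) = (r+1) - r = 1$. The vector $f$ from Algorithm~\ref{algorithm: 1} spans this kernel, its $r$ roots in $\mathbb{P}^1$ are uniquely determined, and they must coincide with the points of every Waring decomposition of $h$.

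Once the projective points $[a_i : b_i]$ are pinned down, the scalars $\lambda_i$ are uniquely recovered from the nonsingular linear system $h_j = \sum_i \lambda_i a_i^j b_i^{d-j}$, so the Waring decomposition of $h$ is unique. Translating back through $\pi$ yields uniqueness of the Vandermonde rank decomposition of $\mathcal{H}$. The main obstacle is the catalecticant rank identity $\rank C_{d-r,r}(h) = r$, but the explicit Vandermonde factorization above makes it a routine consequence of the numerical inequality $r < d/2+1$.
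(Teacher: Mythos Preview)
Your argument is correct and follows the route the paper itself indicates: reduce to binary forms via Lemma~\ref{lemma: V-rank = S-rank}, invoke Theorem~\ref{thm: symmetric rank} to get $\rank_S(h)=r$, and use Sylvester's Theorem~\ref{thm:Sylvester} to identify length-$r$ Waring decompositions with kernel vectors of $C_{d-r,r}(h)$. The paper gives no detailed proof beyond the sentence preceding the theorem, so your write-up is in fact a faithful expansion of what the paper merely cites.

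One remark on economy: the one-dimensionality of $\ker C_{d-r,r}(h)$ is already asserted in Theorem~\ref{thm: symmetric rank} (``which is unique up to scaling''), so your explicit Vandermonde factorization $C_{d-r,r}(h)=A^{\top}\Lambda B$ is not strictly needed if you are willing to quote that theorem in full. Your factorization is nonetheless correct and gives a self-contained justification; it also makes transparent why the hypothesis $r<d/2+1$ is exactly what forces $A$ to have at least $r$ columns and hence full row rank. Either way the conclusion is the same, and your final step---recovering the $\lambda_i$ from the nonsingular Vandermonde system once the nodes are fixed---completes the argument cleanly.
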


\subsection{Some examples}

In the following, we give examples to show how Algorithm~\ref{algorithm: 1} works.
We would like to remark that
\begin{enumerate}
\item[1)] It is possible that $\rank_V(\mH) > \rank_S(\mH)$.

\item[2)] The symmetric rank decomposition of $\mH$
is not necessarily a Vandermonde rank decomposition,
even if $\rank_V(\mH) = \rank_S(\mH)$.
\end{enumerate}
For each example, we not only find the Vandermonde decomposition of
the given Hankel tensor $\mH$, but also exhibit a symmetric decomposition of $\mH$.

\begin{example}
\label{example 1:rank_S. rank_V}
Consider the Hankel matrix $\mathcal{H}\in \Hank^2(\mathbb{C}^3)$:
\[
\mathcal{H} = (\mH_{ij})_{i,j =1}^3 =
\begin{bmatrix}
0 & 0 & 1\\
0 & 1 & 0 \\
1 & 0 & 0
\end{bmatrix},
\]
We have $d=4, s =2 $ and $h = (0,0,1,0,0)$. By Algorithm \ref{algorithm: 1},
we get $\rank_V (\mathcal{H}) = \rank_S (h) = 3$ and
\[
h (x,y)=   6 x^2 y^2 = \sum_{i=1}^ 3 \lambda_i (\alpha_i x + \beta_i y)^4,
\]
where (denote $i := \sqrt{-1}$)
\begin{align*}
(\lambda_1,\lambda_2,\lambda_3) &= (\frac{1}{3}, -\frac{1}{6} -\frac{\sqrt{3}}{6}i, -\frac{1}{6} +\frac{\sqrt{3}}{6}i), \\
(\alpha_1,\alpha_2,\alpha_3) &= (1,\frac{1+\sqrt{3}i}{2}, \frac{1-\sqrt{3}i}{2}), \\
(\beta_1,\beta_2,\beta_3) &= (1, -1,-1).
\end{align*}
So, $\mathcal{H}$ has the Vandermonde rank decomposition
\[
\mathcal{H} = \sum_{i=1}^3 \lambda_i (\beta_i^2 ,\alpha_i\beta_i,\alpha_i^2)^{\otimes 2}.
\]
On the other hand, $\mathcal{H}$ also has the symmetric rank decomposition
\begin{equation}\label{eqn:zx+y^2}
\mathcal{H} =- \frac{1}{2} (e_1 - e_3)^{\otimes 2}
 + \frac{1}{2} (e_1 + e_3)^{\otimes 2} + e_2^{\otimes 2},
\end{equation}
where $\{e_1,e_2,e_3\}$ is the standard unit basis of $\mathbb{C}^3$.
Indeed, $\rank_S(\mathcal{H}) =3$,
because $\mH$ is a matrix and all the tensor ranks are the same.
\end{example}

\begin{example}
\label{example 2:rank_S. rank_V}
Let $\mathcal{H}\in \Hank^3(\mathbb{C}^3)$ be the Hankel tensor such that
\[
\mH_{ijk} =\begin{cases}
1, \text{ if } i + j + k =7, \\
0,\text{ otherwise}.
\end{cases}
\]
The vector $h = (0,0,0,0,1,0).$
In Algorithm \ref{algorithm: 1}, $s=3$ and
$\rank\,C_{3,3}(h) =3$. The unique (up to scaling) vector from the null space of
$C_{3,3}(h)$ is $f = (1, 0 , 0, 0)$.
The equation $f(x,y) = x^3 = 0$ has a triple root. Therefore, we obtain $\rank_V(\mathcal{H}) = d - 3 + 2 = 5$. Moreover, the matrix $C_{1,5}$ is
\[
C_{1,5} = \begin{bmatrix}
0 & 0 & 0 & 0 & 1 & 0\\
0 & 0 & 0 & 1 & 0 & 0
\end{bmatrix}
\]
and $g = \begin{bmatrix}
1 &0 &0 &0& 0& -1
\end{bmatrix}^\mathsf{T} \in \operatorname{Ker}(C_{1,5})$. The binary form defined by $g$ is
\[
g(x,y) = x^5 - y^5 = \prod_{j=0}^4 (x - w^j y),\quad w = \exp(\frac{2\pi i}{5}).
\]
We may obtain the linear system
\[
\begin{bmatrix}
1 & w^5 & w^{10} & w^{15}  & w^{20} \\
5 & 5 w^4 & 5 w^8 & 5 w^{12} & 5 w^{16} \\
10 & 10 w^3 & 10 w^6 & 10 w^9 & 10 w^{12} \\
10 & 10 w^2 & 10 w^4 & 10 w^6 & 10 w^8 \\
5 & 5 w & 5 w^2 & 5 w^3 & 5 w^4  \\
1 & 1 & 1 & 1 & 1
\end{bmatrix} \begin{bmatrix}
 \lambda_0\\
 \lambda_1 \\
 \lambda_2 \\
 \lambda_3 \\
 \lambda_4
\end{bmatrix} = \begin{bmatrix}
0 \\
1 \\
0\\
0 \\
0 \\
0
\end{bmatrix}
\]
for $\lambda_j$'s by comparing coefficients of $x^4y$ and $\sum_{j=0}^4 \lambda_j (w^j x + y)^5$. It is clear that $ \begin{bmatrix}
1/5 & w^1/5 & w^2/5 & w^3/5 & w^4/5
\end{bmatrix}^\mathsf{T}$ is a solution hence we have
\[
h(x,y) = 5 x^4y = \sum_{j=0}^4 \frac{w^j}{5} (w^j x + y)^5
\]
and
\[
\mH = \sum_{j=0}^4 w^j (1,w^j,w^{2j})^{\otimes 3}.
\]

On the other hand, the polynomial associated to $\mH$ is $3(xz + y^2 ) z$,
which has the Waring decomposition:
\begin{equation}\label{eqn:(xz+y^2)z}
\frac{1}{2} \left[ -\frac{1}{2} (2x-z)^3 + \frac{8}{3} (x-z)^3
+ \frac{1}{6} (2x + z)^3 +(y+z)^3 - (y-z)^3 \right].
\end{equation}
%
So,
$\rank_S(\mathcal{H}) \le  5$. In fact, $\rank_S(\mathcal{H}) = 5$
by \cite[Table $1$]{LT2010}.
However, \eqref{eqn:(xz+y^2)z} is not a Vandermonde rank decomposition. In particular, this also implies that symmetric decompositions of $\mH$ are not unique.
\end{example}

\begin{example}
\label{x^{m-1}y + z^m}
Consider the Hankel tensor $\mH \in \H^m(\cpx^3)$ be such that
\[
\mH_{i_1 \ldots i_m} = \begin{cases}
1,\quad \text{ if } i_1 + \cdots + i_m = m +1 \text{ or } 3m \\
0, \quad \text{ otherwise}.
\end{cases}
\]
The polynomial associated to $\mH$ is $m x^{m-1}y + z^m$,
$d = 2m$ and
\[
h_{l} = \begin{cases}
1, \quad \text{ if }  l = 1 \text{ or }2m\\
0, \quad \text{ otherwise}.
\end{cases}
\]
One can check that $\rank\, C_{d-s,s}(h)  = 3$ and for
$f\in \ker\, C_{2m-3,3}(h)$
the polynomial $f(x,y)$ has a multiple root.
Hence we have $\rank_V(\mathcal{H})  = 2m - 1$.
On the other hand, by \cite[Theorem 10.2]{LT2010}, we know
$m \le \rank_S(\mathcal{H}) \le m+1$. Therefore,
if $m\ge 3$, we have $\rank_S(\mathcal{H})< \rank_V(\mathcal{H})$.
\end{example}

\section{Rank relations for general orders}
\label{sec:evenorder}

A Hankel tensor has the usual rank (i.e., the cp rank), symmetric rank, Vandermonde rank,
border rank and symmetric border rank.
This section studies relations among these various ranks for Hankel tensors.
We discus them for the even and odd order cases separately.
To do that, we first need to consider catalecticant matrices for Hankel tensors.

\subsection{Catalecticant matrices}
\label{ssc:flat}

For the order $m$, let $m_1 := \lceil m/2 \rceil.$
If $m=2m_0$ is even, $m_1 = m_0$;
if $m=2m_0+1$ is odd, $m_1 = m_0 + 1$.
A symmetric tensor can be flattened into catalecticant matrices \cite{IarKan99}.
Here, we consider the most square ones.
For each $\mH \in \Hank^m(\mathbb{C}^n)$,
denote by $\mbox{Flat}(\mH)$ the matrix, whose row is indexed by
an integral tuple $I=(i_1 \ldots i_{m_1})$
and whose column is indexed by another one
$J= (i_{m_1+1}, \ldots, i_{m})$,
such that the entries of $\mbox{Flat}(\mH)$ are given as
\[
\mbox{Flat}(\mH)_{I,J}  \, = \,
\mH_{i_1 \ldots i_{m_1}i_{m_1+1} \ldots i_{m}}.
\]
Note that $\mbox{Flat}(\mH)$ is a $n^{m_1}$-by-$n^{m-m_1}$ matrix.
There is a correspondence between catalecticant matrices and
symmetric flattenings for tensors \cite[p.76]{Landsberg2012}.
Because $\mH$ is Hankel, $\mbox{Flat}(\mH)$ has repeated rows and columns.
We consider a submatrix of $\mbox{Flat}(\mH)$ that does not have repeated ones.
Let $\mbox{F}(\mH)$ be the submatrix of $\mbox{Flat}(\mH)$
whose row index $I=(i_1, \ldots, i_{m_1})$ is such that
\[
i_1 \leq \cdots \leq i_{m_1}, \quad
m_1 \leq i_1 + \cdots + i_{m_1} \leq nm_1,
\]
and whose column index $J=(i_{m_1+1},\ldots, i_{m})$ is such that
\[
i_{m_1+1} \leq \cdots \leq i_{m}, \quad
m-m_1 \leq i_{m_1+1} + \cdots + i_{m} \leq n(m-m_1).
\]
In the following, we give an expression for $\mbox{F}(\mH)$.

\begin{lemma}  \label{lm:matF(H)}
Let $\mbox{Flat}(\mH)$, $\mbox{F}(\mH)$ be as above.
Let $l:=(n-1)m_0$. Then,
\be \label{form:F(H)}
\mbox{F}(\mH)
= C_{d-l,l}(h),
\ee
where $C_{d-l,l}(h)$ is defined as in \eqref{eqn:definition of C_{d-r,r}}.
Moreover, $\rank \, \mbox{F}(\mH) = \rank \, \mbox{Flat}(\mH)$.
\end{lemma}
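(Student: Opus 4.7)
The plan is to reduce everything to the observation that, because $\mH$ is Hankel, each entry $\mH_{i_1\ldots i_m}=h_{i_1+\cdots+i_m-m}$ depends only on the index sum. Consequently the entry of $\mbox{Flat}(\mH)$ at position $(I,J)$ with $I=(i_1,\ldots,i_{m_1})$ and $J=(i_{m_1+1},\ldots,i_m)$ equals $h_{|I|+|J|-m}$, where $|I|:=i_1+\cdots+i_{m_1}$ and $|J|:=i_{m_1+1}+\cdots+i_m$.

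First I would verify the shape match. Since $m-m_1=m_0$ in both the even and odd cases, the admissible sums $|I|$ range over $\{m_1,m_1+1,\ldots,nm_1\}$ (a set of size $(n-1)m_1+1$) and the admissible sums $|J|$ range over $\{m_0,\ldots,nm_0\}$ (a set of size $(n-1)m_0+1=l+1$). Because one ordered tuple with non-decreasing entries exists for each attainable sum, the admissible rows/columns of $\mbox{F}(\mH)$ are in bijection with these sums. One then checks that $(n-1)m_1+1=d-l+1$, which matches the row count of $C_{d-l,l}(h)$.

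Next I would match the entries. Re-index the rows of $\mbox{F}(\mH)$ by $a:=|I|-m_1\in\{0,\ldots,d-l\}$ and its columns by $b:=|J|-m_0\in\{0,\ldots,l\}$. Then
\[
\mbox{F}(\mH)_{I,J}=h_{|I|+|J|-m}=h_{(a+m_1)+(b+m_0)-m}=h_{a+b},
\]
which is precisely the $(a,b)$ entry of $C_{d-l,l}(h)$ by the definition in \eqref{eqn:definition of C_{d-r,r}}. This gives \eqref{form:F(H)}.

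Finally, for the rank equality, I would argue that $\mbox{F}(\mH)$ is obtained from $\mbox{Flat}(\mH)$ by deleting rows and columns that duplicate existing ones. Indeed, if $|I_1|=|I_2|$ then the rows of $\mbox{Flat}(\mH)$ at $I_1$ and $I_2$ are identical, and the analogous statement holds for columns; moreover every sum value $|I|\in\{m_1,\ldots,nm_1\}$ is realized by some tuple appearing as a row of $\mbox{F}(\mH)$, and likewise for columns. Therefore $\mbox{Flat}(\mH)$ and $\mbox{F}(\mH)$ have the same row space and the same column space, so $\rank\,\mbox{Flat}(\mH)=\rank\,\mbox{F}(\mH)$. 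There is no substantive obstacle here: the only care needed is keeping track of the indexing conventions between $m_0,m_1,l,d$ when $m$ is even versus odd, which the identity $m-m_1=m_0$ handles uniformly.
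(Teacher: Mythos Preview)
Your proof is correct and in fact more direct than the paper's. The paper verifies \eqref{form:F(H)} first for rank-one Hankel tensors $(a^{n-1},a^{n-2}b,\ldots,b^{n-1})^{\otimes m}$ by an explicit computation, and then extends to all of $\Hank^m(\cpx^n)$ by linearity, invoking the existence of a Vandermonde decomposition. You bypass this by using the defining Hankel identity $\mH_{i_1\ldots i_m}=h_{i_1+\cdots+i_m-m}$ and a clean re-indexing via $a=|I|-m_1$, $b=|J|-m_0$. Your route is more elementary in that it does not rely on the (logically prior but nontrivial) fact that every Hankel tensor admits a Vandermonde decomposition; the paper's route, on the other hand, makes the connection to rank-one building blocks explicit. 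Both arguments handle the rank equality the same way, by observing that $\mbox{F}(\mH)$ retains one representative of each repeated row/column of $\mbox{Flat}(\mH)$.

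One small wording issue: your sentence ``one ordered tuple with non-decreasing entries exists for each attainable sum, [so] the admissible rows/columns of $\mbox{F}(\mH)$ are in bijection with these sums'' only establishes surjectivity of the map $I\mapsto|I|$. For $m_1\ge 2$ and $n\ge 3$ there are distinct non-decreasing tuples with the same sum (e.g.\ $(1,3)$ and $(2,2)$), so injectivity is not automatic from the paper's stated description of $\mbox{F}(\mH)$. The paper's definition is a bit loose here; its intent---made clear both by the lemma statement and by the remark that $\mbox{F}(\mH)$ is ``the maximum submatrix \ldots\ that does not have any repeated rows and columns''---is that one keeps exactly one representative per sum value. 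With that reading your bijection claim is correct and the rest of your argument goes through unchanged.
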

\begin{proof}
If $\mH = (a^{n-1}, a^{n-2}b, \ldots, b^{n-1})^{\otimes m}$
is rank-$1$, then
\[
\mbox{F}(\mH) =
\bbm
a^{d} & a^{d-1}b &  a^{d-2} b^2 & \cdots & a^{d-l} b^{l} \\
a^{d-1}b &  a^{d-2} b_j^2 &  a^{d-3} b^3 & \cdots & a^{d-l-1} b^{l+1} \\
 \vdots &  \vdots &  \vdots & \ddots &  \vdots \\
a^{d-l}b^{l} &  a^{d-l-1} b^{l+1} & a^{d-l-2} b^{l+2} & \cdots &  b^{d} \\
\ebm.
\]
The equality~\reff{form:F(H)}is clearly true.
If $\mH$ is not rank-$1$, then $\mH$ is a sum of rank-$1$ Hankel tensors, say,
$
\mH = \mH_1 + \cdots + \mH_r,
$
where each $\mH_i$ is rank-$1$.
As a matrix-valued function, $\mbox{F}(\mH)$ is linear in $\mH$, so
\[
\mbox{F}(\mH) = \mbox{F}(\mH_1) + \cdots + \mbox{F}(\mH_r) = C_{d-l,l}(h).
\]
Hence, \reff{form:F(H)} is true.
Note that $\mbox{F}(\mH)$ is the maximum submatrix of
$\mbox{Flat}(\mH)$ that does not have any repeated rows and columns.
%
%
So, $\mbox{F}(\mH)$ and $\mbox{Flat}(\mH)$ have the same rank.
\end{proof}

\subsection{The even order case}

When the order $m=2m_0$ is even, the number $d := (n-1)m$ is also even.
So $s = \lfloor d/2 \rfloor = (n-1)m_0$.
Recall that $h$ is determined by $\mH$ as in \reff{eqn:defition of h} and
$C_{d-s,s}(h)$ is the Hankel matrix determined by
$h$ as in \eqref{eqn:definition of C_{d-r,r}}.
By Theorem \ref{thm: symmetric rank}, the rank $r$ of $ C_{d-s, s}$
is equal to the border rank of $h$,
which also equals the Vandermonde border rank of $\mH$.
So, it always holds that
\begin{equation}\label{eqn:border Vrank}
\brank_S (h) = \brank_V(\mH) = r.
\end{equation}
If $r=s+1$, then $\rank_V(\mH) = r$ by Theorem \ref{thm: symmetric rank}
and if $r < s+1$, then there exists a unique (up to scaling) vector
$0 \ne f \in \ker \,C_{d-r,r} (h)$.
The rank relations are summarized as follows.

\begin{theorem}\label{thm:Srank:bSrank}
Suppose $m=2m_0$ is even.
For $\mathcal{H}\in \Hank^m(\mathbb{C}^n)$,
let $h,d,s,r,f$ be as above.
Then, we have:
\bit

\item [(i)]\label{condition}
If $r = s + 1$, or if $r < s + 1$ and $f$ has no multiple roots, then
\be \label{ev:rk=:rSV:Hh}
\rank_V (\mc{H} )  =  \rank_S(\mc{H} )  =  \rank(\mH)  = \brank_V(\mc{H})
= \brank_S(\mathcal{H}) = \brank(\mH) = r,
\ee

\item [(ii)] If $r < s + 1$ and $f$ has a multiple root,
then
\be  \label{rkl:v>s=bbs=r}
d-r+2  =\rank_V (\mc{H})
\geq \rank_S(\mathcal{H}) \geq \brank_V(\mH) =
\brank_S(\mathcal{H}) = \brank(\mH) = r.
\ee

%
%

\eit

\end{theorem}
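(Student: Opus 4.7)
The plan is to reduce everything to statements about the associated binary form $h(x,y)$ via Lemma~\ref{lemma: V-rank = S-rank}, invoke Theorem~\ref{thm: symmetric rank} to pin down the $V$-rank and the border $V$-rank, and then use the catalecticant flattening from Lemma~\ref{lm:matF(H)} to supply the crucial lower bound on the ordinary border rank. Since everything else is encoded in the standard inequality chain $\brank(\mH) \leq \rank(\mH) \leq \rank_S(\mH) \leq \rank_V(\mH)$ together with $\brank(\mH) \leq \brank_S(\mH) \leq \brank_V(\mH)$, the theorem will reduce to a bookkeeping exercise once those two inputs are in hand.

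First, I would extract the $V$-quantities. Because $m = 2m_0$ is even, $d = 2s$ and $s = \lfloor d/2 \rfloor = (n-1)m_0$. Theorem~\ref{thm: symmetric rank} then yields $\brank_S(h) = r$ in all cases, while $\rank_S(h) = r$ whenever $r = s+1$ or $f$ has no multiple roots, and $\rank_S(h) = d-r+2$ when $r < s+1$ and $f$ has a multiple root. Transporting these to the tensor side via Lemma~\ref{lemma: V-rank = S-rank} gives $\brank_V(\mH) = r$ in every case, $\rank_V(\mH) = r$ in part~(i), and $\rank_V(\mH) = d-r+2$ in part~(ii). I would also record the easy fact $\brank_S(\mH) \leq \brank_V(\mH)$: approximating $\mH$ by Hankel tensors of $V$-rank $r$ exhibits $\mH$ as a limit of sums of $r$ symmetric rank-one tensors.

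Second, I would supply the lower bound $\brank(\mH) \geq r$ via flattening. For even $m$, the quantity $l = (n-1)m_0$ of Lemma~\ref{lm:matF(H)} equals $s$, so $\mbox{F}(\mH) = C_{d-s,s}(h)$ has rank exactly $r$; Lemma~\ref{lm:matF(H)} then gives $\rank \mbox{Flat}(\mH) = r$. Since the flattening of a rank-one tensor is a rank-one matrix and matrix rank is lower semicontinuous (its sublevel sets are cut out by minors and hence Zariski closed), any border-rank-$k$ decomposition of $\mH$ forces $\rank \mbox{Flat}(\mH) \leq k$, yielding $r \leq \brank(\mH)$.

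Finally, I would assemble the two statements. For (i), combining $r \leq \brank(\mH) \leq \rank(\mH) \leq \rank_S(\mH) \leq \rank_V(\mH) = r$ with $r \leq \brank(\mH) \leq \brank_S(\mH) \leq \brank_V(\mH) = r$ forces all six ranks to equal $r$, giving \eqref{ev:rk=:rSV:Hh}. For (ii), the same two chains give $r \leq \brank(\mH) \leq \brank_S(\mH) \leq \brank_V(\mH) = r$, so $\brank(\mH) = \brank_S(\mH) = r$, while $\rank_V(\mH) = d-r+2 \geq \rank_S(\mH) \geq \brank_V(\mH) = r$ is immediate from the trivial chain; this is exactly \eqref{rkl:v>s=bbs=r}. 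The only nontrivial ingredient is the flattening lower bound, and that is already packaged in Lemma~\ref{lm:matF(H)}, so I expect no serious obstacle beyond marshalling the inequalities carefully.
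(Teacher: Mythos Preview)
Your proposal is correct and follows essentially the same route as the paper: both arguments use Theorem~\ref{thm: symmetric rank} and Lemma~\ref{lemma: V-rank = S-rank} to fix $\rank_V(\mH)$ and $\brank_V(\mH)$, use Lemma~\ref{lm:matF(H)} plus the standard flattening bound to obtain $\brank(\mH)\ge r$, and then squeeze the inequality chains. The only cosmetic difference is that you spell out the justification for $\brank_S(\mH)\le\brank_V(\mH)$ and for the flattening lower bound a bit more explicitly than the paper does.
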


\begin{proof}
\noindent
(i) If $r = d/2 + 1$, or $r < d/2 + 1$ and $f$ has no multiple roots,
by Theorem~\ref{thm: symmetric rank} and
Lemma~\ref{lemma: V-rank = S-rank},
\[
r = \rank_S(h) = \rank_V(\mH) \geq \rank_S(\mH).
\]
It is well-known that the border rank of a tensor is always greater than
or equal to the rank of its flattening matrix \cite{Landsberg2012}.
Since $m=2m_0$ is even, $s=l=(n-1)m_0$.
By Lemma~\ref{lm:matF(H)}, we have
\[
\brank (\mH) \geq \rank\, \mbox{Flat}(\mH) = \rank\, \mbox{F}(\mH)
= \rank \, C_{d-s,s}(h) = r.
\]
Moreover, we also have
\[
\rank_S(\mH) \geq \rank(\mH) \geq \brank(\mH) \geq r,
\]
\[
\rank_S(\mH) \geq \brank_S(\mH) \geq \brank(\mH) \geq r.
\]
Since $r \geq \rank_S(\mH)$, all the ranks must be the same
and the equalities in \reff{ev:rk=:rSV:Hh} hold.

\medskip
\noindent
(ii) If $r < d/2 + 1$ and $f$ has a multiple root, then,
by Theorem~\ref{thm: symmetric rank} and
Lemma~\ref{lemma: V-rank = S-rank}, we have
\[
\rank_V(\mathcal{H}) = \rank_S(h)=d-r+2> r.
\]
Note that the symmetric border rank of $h$ is $r$,
by Theorem~\ref{thm: symmetric rank}.
Then, Lemma~\ref{lemma: V-rank = S-rank} implies that
\[
r = \brank_S (h) \ge \brank_{S}( \mc{H} )
\geq \brank ( \mc{H} ).
\]
As in the proof of (i), we can also prove that
\[
\brank_S(\mH) \geq \brank(\mH) \geq
\rank\, \mbox{Flat}(\mH) \geq  \rank\, \mbox{F}(\mH) = r.
\]
Hence, \reff{rkl:v>s=bbs=r} is true, because
$\rank_S(\mH) \geq \brank_S(\mH)$.
\end{proof}

%
%
%
%

Theorem~\ref{thm:Srank:bSrank} immediately implies the following.

\begin{corollary}\label{comon:true:even}
If $\mH \in \Hank^m(\mathbb{C}^n)$ is generic
and $m$ is even, then its cp rank, symmetric rank,
border rank, symmetric border rank, and Vandermonde rank
are the same, which is $1+(n-1)m/2$.
\end{corollary}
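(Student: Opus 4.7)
The plan is to reduce the corollary to Theorem~\ref{thm:Srank:bSrank}(i) by showing that case (i) applies for a generic Hankel tensor when $m$ is even. Write $m = 2m_0$, so $d = (n-1)m = 2(n-1)m_0$ is even, and hence $s = \lfloor d/2 \rfloor = (n-1)m_0 = (n-1)m/2$. In this situation the catalecticant $C_{d-s,s}(h)$ is the \emph{square} Hankel matrix of size $(s+1)\times(s+1)$, so its rank $r$ is at most $s+1$, and case (i) of Theorem~\ref{thm:Srank:bSrank} applies automatically once we know $r = s+1$.

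The key step is therefore to show that for generic $\mH \in \Hank^m(\cpx^n)$, we have $\det C_{s,s}(h) \ne 0$. Since the entries of $h$ are affine-linear coordinates on $\Hank^m(\cpx^n)$ via the bijection $\pi$ of \eqref{eqn:tensor-binary form}, $\det C_{s,s}(h)$ is a polynomial in the defining data of $\mH$. By the discussion of generic points in Section~\ref{sec:preliminary}, it suffices to exhibit a single $h \in \cpx^{d+1}$ for which $C_{s,s}(h)$ is invertible; one explicit choice is $h_d = 1$ and all other $h_j = 0$, for which $C_{s,s}(h)$ is the anti-identity matrix up to sign and is clearly nonsingular. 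Therefore the locus where $\det C_{s,s}(h) = 0$ is a proper Zariski-closed subset of $\Hank^m(\cpx^n)$, and on its complement $r = s + 1 = 1 + (n-1)m/2$.

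Finally, since $r = s+1$ places us in case (i) of Theorem~\ref{thm:Srank:bSrank}, the chain of equalities \eqref{ev:rk=:rSV:Hh} yields
\[
\rank_V(\mH) = \rank_S(\mH) = \rank(\mH) = \brank_V(\mH) = \brank_S(\mH) = \brank(\mH) = 1 + (n-1)m/2,
\]
as desired. The only nontrivial piece is the genericity argument for the nonvanishing of $\det C_{s,s}(h)$, and this is handled by the explicit witness above; the remainder is purely citation of the already-proved theorem.
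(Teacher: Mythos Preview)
Your approach is essentially the same as the paper's: both reduce to Theorem~\ref{thm:Srank:bSrank}(i) by observing that $C_{s,s}(h)$ has full rank $s+1$ for generic $h$. The paper's proof simply asserts this (``When $\mH$ is generic, the vector $h$ is also generic and $\rank\, C_{s,s}(h) = d/2+1$''); you go further and supply an explicit witness, which is a nice addition.

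However, your explicit witness is wrong. Recall $d = 2s$, so the $(i,j)$ entry of $C_{s,s}(h)$ (with $0 \le i,j \le s$) is $h_{i+j}$. Taking $h_d = 1$ and all other $h_j = 0$ makes the only nonzero entry the bottom-right one, $h_{2s} = h_d$, so the matrix has rank $1$, not $s+1$; it is certainly not the anti-identity matrix. A correct witness is $h_s = 1$ with all other $h_j = 0$: then $h_{i+j}$ is nonzero precisely on the anti-diagonal $i+j = s$, yielding the exchange matrix, which is invertible. With this small fix, your proof is complete and matches the paper's argument.
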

\begin{proof}
When $\mH$ is generic,
the vector $h$ is also generic
and $\rank \, C_{s,s}(h) = d/2+1$.
By Theorem \ref{thm:Srank:bSrank}(i),
all the ranks are equal.
\end{proof}

In particular, Theorem \ref{thm:Srank:bSrank}
also implies the following:

\begin{corollary}\label{cor:R_S = R_V}
For a generic Hankel tensor of an even order,
Algorithm~\ref{algorithm: 1} produces a Vandermonde decomposition
that achieves its cp rank, symmetric rank, border rank,
symmetric boarder rank and Vandermonde rank simultaneously.
\end{corollary}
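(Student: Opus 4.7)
The plan is to combine Corollary \ref{comon:true:even} with a careful tracing through the branches of Algorithm~\ref{algorithm: 1} on a generic input. Let $\mH \in \Hank^m(\cpx^n)$ be generic with $m = 2m_0$ even, and let $h$, $d = (n-1)m$, $s = d/2$ be as in the algorithm. First I would invoke Corollary \ref{comon:true:even} (itself a consequence of Theorem \ref{thm:Srank:bSrank}(i)) to conclude that $\rank(\mH) = \rank_S(\mH) = \rank_V(\mH) = \brank(\mH) = \brank_S(\mH) = \brank_V(\mH) = 1 + (n-1)m/2$, and set $r$ equal to this common value. The task then reduces to showing that Algorithm~\ref{algorithm: 1}, when applied to this generic $\mH$, terminates with a Vandermonde decomposition consisting of exactly $r$ summands.

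Next I would verify that the algorithm follows the ``easy'' branch for generic input. Since $h$ is generic in $\S^d(\cpx^2)$ and $C_{s,s}(h)$ is a square Hankel matrix of size $(s+1) \times (s+1)$, its rank is generically maximal, giving $\rank\, C_{d-s,s}(h) = s+1 = r$. Thus Step~$3$ of Algorithm~\ref{algorithm: 1} immediately declares $\rank_V(\mH) = r$ with $k = r$, without ever entering the ``$d - r + 2$'' case. In Step~$2$, any nonzero vector in $\ker C_{d-r,r}(h)$ gives a binary form $f(x,y)$ of degree $r$; the locus where such an $f$ has a multiple root is a proper Zariski-closed subset of $\H^m(\cpx^n)$, so for generic $\mH$ the form $f$ has $r$ distinct projective roots $(a_1,b_1), \ldots, (a_r, b_r)$ recovered in Step~$4$. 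Step~$5$ then determines scalars $\lmd_1, \ldots, \lmd_r$ by the (generically nonsingular) linear system relating $h(x,y)$ to $\sum_i \lmd_i (a_i x + b_i y)^d$.

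To finish, I would observe that the output decomposition
\[
\mH \;=\; \sum_{i=1}^{r} \lmd_i \bigl(a_i^{n-1},\, a_i^{n-2}(-b_i),\, \ldots,\, (-b_i)^{n-1}\bigr)^{\otimes m}
\]
is by definition a Vandermonde decomposition of length $\rank_V(\mH) = r$, and since we work over $\cpx$ each scalar $\lmd_i$ admits an $m$-th root $\mu_i$, so absorbing $\mu_i$ into the Vandermonde vector $v_i$ rewrites the sum as $\mH = \sum_{i=1}^r (\mu_i v_i)^{\otimes m}$, a symmetric decomposition of length $r = \rank_S(\mH)$. Viewing each symmetric summand as an outer product of $m$ copies of the same vector makes this simultaneously a cp decomposition of length $r = \rank(\mH)$. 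Because a decomposition achieving the symmetric rank (resp.\ cp rank) also trivially achieves the corresponding border rank, which coincides with the true rank in our case, the single decomposition produced by Algorithm~\ref{algorithm: 1} attains $\rank, \rank_S, \brank, \brank_S, \rank_V$ all at once.

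The only point requiring care is the second paragraph, namely checking that the generic Hankel tensor does not fall into either degenerate branch of Algorithm~\ref{algorithm: 1}: neither the rank drop $\rank\, C_{s,s}(h) < s+1$ nor the appearance of a multiple root in $f$. Both failure conditions cut out proper Zariski-closed subsets of $\H^m(\cpx^n)$ under the isomorphism $\pi$ of \eqref{eqn:tensor-binary form} (the first is the hypersurface where a Hankel determinant vanishes, and the second is the image of the discriminant locus under the linear map $h \mapsto f$), so their complement is Zariski open and dense, which is exactly the notion of ``generic'' used in the statement.
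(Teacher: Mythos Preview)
Your proposal is correct and follows essentially the same approach as the paper, which states the corollary without proof as an immediate consequence of Theorem~\ref{thm:Srank:bSrank}; you have simply made explicit the verification that a generic $\mH$ lands in the branch $r = d/2+1$ of the algorithm and that the resulting length-$r$ Vandermonde decomposition realizes all five ranks at once. One small imprecision: when $r = d/2+1$ the kernel $\ker C_{d-r,r}(h)$ is two-dimensional, so $f$ is not a well-defined function of $h$ alone, and your phrase ``the locus where such an $f$ has a multiple root is a proper Zariski-closed subset of $\H^m(\cpx^n)$'' should instead say that a \emph{generic} choice of $f$ in that kernel has distinct roots---but this does not affect the argument.
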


In the following, we give some examples to show
applications of Theorem \ref{thm:Srank:bSrank}.
In particular, we would like to remark that:
\begin{itemize}

\item [1)] It is possible that
$\rank_V(\mathcal{H}) > \rank_S(\mathcal{H})$, even if the order is even.

\item[2)] We may have
$
\rank_V(\mathcal{H})  = \rank_S(\mathcal{H}) >  \brank_S(\mathcal{H}).
$
\end{itemize}

\begin{example}
%
%
Consider the Hankel tensor $\mH$ in Example~\ref{x^{m-1}y + z^m}.
We have
\[
\rank_V(\mathcal{H}) = 2m - 1, \quad
m \leq \rank_S(\mathcal{H}) \leq m+1, \quad \brank_S(\mathcal{H}) = 3.
\]
If $m \geq 4$, then
$\brank_S(\mathcal{H}) < \rank_S(\mathcal{H}) < \rank_V(\mathcal{H})$.
\end{example}

\begin{example}
\label{x^{m-2}y^2 + x^{m-1}z}
Consider the Hankel tensor
$\mathcal{H} \in \H^m(\cpx^3)$ such that
\[
\mH_{i_1,\dots, i_m} = \begin{cases}
1, \quad \text{ if }i_1 + \cdots + i_m = m+2\\
0,\quad \text{ otherwise}.
\end{cases}
\]
The polynomial associated to $\mH$ is
$\binom{m}{2}x^{m-2}y^2 + mx^{m-1}z$.
By Algorithm \ref{algorithm: 1} and Theorem \ref{thm:Srank:bSrank},
$\rank_V(\mathcal{H})  = 2m-1$.
By \cite[Theorem 10.2]{LT2010},
\[
\brank_S(\mathcal{H}) = 3, \quad
m \leq \rank_S(\mathcal{H}) \leq 2m-1.
\]
If $m \geq 4$, $\brank_S(\mathcal{H}) < \rank_S(\mathcal{H})$.
\end{example}

\subsection{The odd order case}
\label{ssc:oddord}

When the order $m=2m_0+1$ is odd,
the number $d = (n-1)m$ might not be even,
and $s = \lfloor d/2 \rfloor$ might be different from $(n-1)m_0$.
For $\mathcal{H}\in \Hank^m(\mathbb{C}^n)$,
$h$ is still the vector as in \eqref{eqn:defition of h}
and $r = \rank \, C_{d-s,s}(h)$.
Like \reff{eqn:border Vrank}, it also holds that
\[
\brank_S (h) = \brank_V(\mH) = r.
\]
Let $f = (f_0,\dots, f_r) \in \cpx^{r+1}$
be the unique vector (up to scaling) in $\ker C_{d-r,r}(h)$.
The rank relations for $\mH$ are as follows.

\begin{theorem}  \label{thm:rkrl:genodd} Let $n,m,d,\mH, h, r, f$ be as above.
Suppose $r \leq 1+(n-1)m_0$, then we have:
\bit

\item [(i)]
If $f(x,y)$ has no multiple roots, then
\be
\rank_V (\mc{H} ) = \rank_S(\mc{H} ) = \rank(\mH) = \brank_V(\mH) =
\brank_S(\mathcal{H}) = \brank(\mH) = r,
\ee

\item [(ii)] If $f(x,y)$ has a multiple root,
then
\be
d-r+2  =\rank_V (\mc{H})
\geq \rank_S(\mathcal{H}) \geq \brank_V(\mH) =
\brank_S(\mathcal{H}) = \brank(\mH) = r.
\ee

\eit

\end{theorem}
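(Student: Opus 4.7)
The plan is to follow the same strategy as in the proof of Theorem~\ref{thm:Srank:bSrank}, with one extra step to bridge the gap between the index $s=\lfloor d/2\rfloor$ at which $\brank_S(h)=\rank C_{d-s,s}(h)$ is read off and the index $l=(n-1)m_0$ at which the symmetric flattening $\mbox{F}(\mH)$ is given by $C_{d-l,l}(h)$; in the even order case these indices coincide, but for odd $m$ they do not.

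First, I would use Theorem~\ref{thm: symmetric rank} together with Lemma~\ref{lemma: V-rank = S-rank} to convert information about $h$ into information about $\mH$. A short check shows that for $m=2m_0+1$ one has $s\geq(n-1)m_0$, so the hypothesis $r\leq 1+(n-1)m_0$ forces either $d$ odd or $r<d/2+1$, and thus the second bullet of Theorem~\ref{thm: symmetric rank} applies. Hence $\brank_S(h)=r$ in both cases, while $\rank_S(h)=r$ when $f$ has no multiple roots and $\rank_S(h)=d-r+2$ otherwise. Lemma~\ref{lemma: V-rank = S-rank} then transfers these to $\rank_V(\mH)=\rank_S(h)$ and $\brank_V(\mH)=r$.

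Next, I would lower-bound $\brank(\mH)$ by way of the symmetric flattening. Lemma~\ref{lm:matF(H)} gives $\rank\mbox{Flat}(\mH)=\rank C_{d-l,l}(h)$ with $l=(n-1)m_0$, and the standard flattening inequality then yields $\brank(\mH)\geq\rank C_{d-l,l}(h)$. The new ingredient, not needed in the even case, is the catalecticant identity $\rank C_{d-l,l}(h)=r$. Since $l\leq d-l$, the hypothesis $r\leq l+1$ automatically gives $r\leq d-l+1$ as well; the symmetry and unimodality of the Hilbert function of the Artinian Gorenstein algebra $\cpx[x,y]/h^\perp$, or equivalently the classical fact that the apolar ideal of a binary form of catalecticant rank $r$ is a complete intersection of degrees $r$ and $d-r+2$, then forces $\rank C_{d-k,k}(h)=r$ for every $k$ with $r-1\leq k\leq d-r+1$, and in particular for $k=l$. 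Establishing this plateau identity is the main technical obstacle of the proof; everything else is parallel to the even case.

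With the Vandermonde ranks computed and the flattening lower bound $\brank(\mH)\geq r$ secured, the conclusion is bookkeeping. For part~(i), the chains
\[
r=\rank_V(\mH)\geq\rank_S(\mH)\geq\rank(\mH)\geq\brank(\mH)\geq r
\]
and $r=\brank_V(\mH)\geq\brank_S(\mH)\geq\brank(\mH)\geq r$ collapse, forcing all six ranks to equal $r$. For part~(ii), sandwiching with $\brank_V(\mH)=r$ gives $\brank_V(\mH)=\brank_S(\mH)=\brank(\mH)=r$, and combining with $\rank_V(\mH)=d-r+2$ and $\rank_S(\mH)\geq\brank_S(\mH)=r$ yields the entire chain displayed in the statement.
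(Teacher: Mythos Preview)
Your proposal is correct and follows essentially the same route as the paper: reduce to the binary form $h$ via Theorem~\ref{thm: symmetric rank} and Lemma~\ref{lemma: V-rank = S-rank}, then lower-bound $\brank(\mH)$ by $\rank C_{d-l,l}(h)$ through Lemma~\ref{lm:matF(H)}, with the only new work being the identification $\rank C_{d-l,l}(h)=r$ under the hypothesis $r\le l+1$. The paper obtains this catalecticant plateau by citing Proposition~9.7 of \cite{Harris1992} (the determinantal description of secants of the rational normal curve), whereas you invoke the equivalent fact via the Hilbert function of $\cpx[x,y]/h^{\perp}$ and the complete-intersection structure of the apolar ideal; these are two phrasings of the same classical result, so the arguments are interchangeable. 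One cosmetic point: what you call ``the second bullet'' of Theorem~\ref{thm: symmetric rank} is actually its third item.
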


\begin{proof}
We follow the same approach as in the proof of
Theorem~\ref{thm:Srank:bSrank}. The difference is that
we need the assumption that $r \leq 1+(n-1)m_0$
when the order $m$ is odd.

\medskip
\noindent
(i) When $f$ has no multiple roots,
by Theorem \ref{thm: symmetric rank} and
Lemma~\ref{lemma: V-rank = S-rank}, we also have
\[
r = \rank_S(h) = \rank_V(\mH) \geq \rank_S(\mH).
\]
Let $l:= (n-1)m_0$.
In the following, we show that
\be \label{rk:C(h):lrs}
\rank \, C_{d-l,l}(h) \, = \, r.
\ee
Note that $r \leq 1 + l$,
$\rank \, C_{d-s,s}(h) = r$,
and $\brank_S(h)=r$ by Theorem \ref{thm: symmetric rank}.
\bit
\item When $r \leq l$,  \reff{rk:C(h):lrs} is true
by Proposition~9.7 of \cite{Harris1992}, since $\brank_S(h)=r$.

\item When $r = 1 + l$, we still have $\rank \, C_{d-l,l}(h) \leq r$.
If $\rank \, C_{d-l,l}(h) < r$, then $\brank_S(h)<r$
by Proposition~9.7 of \cite{Harris1992},
which is a contradiction. So,
\reff{rk:C(h):lrs} is also true.

\eit
Recall the matrices $\mbox{Flat}(\mH)$, $\mbox{F}(\mH)$
defined as in the subsection~\ref{ssc:flat}.
By Lemma~\ref{lm:matF(H)},
\[
\brank (\mH) \geq \rank\, \mbox{Flat}(\mH)
= \rank\, \mbox{F}(\mH) = \rank \, C_{d-l,l}(h) = r.
\]
Also note that $\brank(\mH) \leq \brank_S(\mH) \leq \rank_S(\mH)$.
Since $\rank_V(\mH)=r$, the relation \reff{rl:b<r<s<v}
and the above
imply that all the ranks must be the same.

\medskip

\noindent
(ii) The proof is the same as for
item (ii) of Theorem~\ref{thm:Srank:bSrank}.
\end{proof}

When $\mH$ is generic
and $m=2m_0+1$, for $n>2$, we have
\[
r = (n-1)m_0 +  \lceil  n/2 \rceil > 1 + (n-1)m_0.
\]
Hence, the rank relations in Theorem~\ref{thm:rkrl:genodd}
are not guaranteed any more. However,
we can still get a lower and upper bound for those ranks.

\begin{proposition}
If the order $m = 2m_0 +1$ is odd and $\mH \in \H^m (\cpx^n)$ is generic, then
\[
\rank_V(\mathcal{H}) =  m_0(n-1)  + \lceil  n/2 \rceil, \qquad \and
\]
\[
m_0(n-1) + 1 \le  \brank (\mH) \leq
\rank (\mathcal{H}) \le \rank_S(\mathcal{H})
\le \rank_V(\mathcal{H}).
\]
\end{proposition}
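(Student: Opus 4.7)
The plan is to handle the two parts of the statement separately: first the exact formula for $\rank_V(\mH)$, and then the lower bound $m_0(n-1)+1 \le \brank(\mH)$, since the remaining inequalities $\brank(\mH) \le \rank(\mH) \le \rank_S(\mH) \le \rank_V(\mH)$ are immediate from \reff{rl:b<r<s<v}. Throughout, I would use the linear isomorphism $\pi : \H^m(\cpx^n) \to \S^d(\cpx^2)$ of Lemma~\ref{lemma: V-rank = S-rank}, where $d=(n-1)m$, so that genericity of $\mH$ transfers to genericity of the associated binary form $h$.

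For the Vandermonde rank, I would simply quote Corollary~\ref{thm: V-rank}. Substituting $m=2m_0+1$ gives $(n-1)m+1 = 2m_0(n-1) + n$, and since $m_0(n-1)$ is an integer,
\[
\rank_V(\mH) \;=\; \left\lceil \frac{(n-1)m+1}{2} \right\rceil \;=\; m_0(n-1) + \left\lceil \frac{n}{2} \right\rceil .
\]
For the border-rank lower bound I would use the catalecticant flattening. Set $l := (n-1)m_0$. By Lemma~\ref{lm:matF(H)}, the flattening $\mbox{F}(\mH)$ equals $C_{d-l,l}(h)$, a Hankel matrix of shape $((n-1)(m_0+1)+1) \times ((n-1)m_0+1)$. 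Because border rank dominates the rank of any flattening,
\[
\brank(\mH) \;\ge\; \rank\,\mbox{Flat}(\mH) \;=\; \rank\,\mbox{F}(\mH) \;=\; \rank\,C_{d-l,l}(h),
\]
so it suffices to show that for generic $h$ this catalecticant attains its maximum possible rank $l+1 = m_0(n-1)+1$. Rank-deficiency is a Zariski-closed condition on $h$, so I only need to exhibit one $h^\ast \in \S^d(\cpx^2)$ making $C_{d-l,l}(h^\ast)$ full rank $l+1$. Taking $h^\ast_i := \delta_{i,l}$, the matrix $C_{d-l,l}(h^\ast)$ has ones precisely on the anti-diagonal $\{i+j=l+2\}$ and zeros elsewhere; since $d-2l = n-1 \ge 0$, this anti-diagonal has exactly $l+1$ entries, so the matrix has rank $l+1$, as required.

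The main (conceptual) obstacle is not in the proof itself but in understanding why the proposition stops at a two-sided bound. Catalecticant flattening alone delivers only the lower bound $m_0(n-1)+1$, whereas $\rank_V(\mH) = m_0(n-1) + \lceil n/2 \rceil$, so a gap of $\lceil n/2 \rceil - 1$ persists once $n \ge 3$. Closing this gap, and thereby extending the equality $\brank(\mH) = \rank(\mH) = \rank_S(\mH) = \rank_V(\mH)$ of Corollary~\ref{comon:true:even} to odd orders greater than three, would require a sharper lower bound, for instance via a Young/Koszul flattening as in Theorem~\ref{thm:lower bound}, and is deliberately left open in this proposition.
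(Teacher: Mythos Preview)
Your proof is correct and follows essentially the same route as the paper: quote Corollary~\ref{thm: V-rank} for the Vandermonde rank, note that the chain of inequalities on the right is immediate from \reff{rl:b<r<s<v}, and obtain the lower bound via $\brank(\mH) \ge \rank\,\mbox{Flat}(\mH) = \rank\,C_{d-l,l}(h)$ with $l=(n-1)m_0$. The only difference is that the paper simply asserts that $\rank\,C_{d-l,l}(h)=l+1$ for generic $h$, whereas you supply an explicit witness $h^\ast_i=\delta_{i,l}$ to verify this; your closing paragraph on the residual gap is accurate context but not part of the proof proper.
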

\begin{proof}
By Proposition~\ref{thm: V-rank}, we know that
$\rank_V(\mathcal{H}) =  m_0(n-1)  + \lceil  n/2 \rceil$.
The latter three inequalities are obvious.
It is enough to prove the first one.
We follow the proof of item (i) in Theorem~\ref{thm:rkrl:genodd}.
For all $\mH$, we always have ($l = (n-1)m_0$):
\[
\brank(\mH) \geq \rank \, \mbox{Flat} (\mH) \geq \rank\, \mbox{F}(\mH)
= C_{d-l, l}(h).
\]
When $\mH$ is generic, $h$ is also generic and
so $\rank\, C_{d-l, l}(h) = 1+l$,
which completes the proof.
\end{proof}

\section{Rank relations for order three}
\label{sec:order=3}

This section studies the relations
among various ranks of Hankel tensors
when the order $m=3$. For cubic Hankel tensors,
we are able to get better rank relations,
in addition to those given in Theorem~\ref{thm:rkrl:genodd}.
Recall that for each tensor $T$ and positive integer $p$, a linear map $\varphi_T^p$ is
defined in the subsection~\ref{ssc:mulinalg}
for vector spaces $A=B=C=\cpx^n$.
We use the standard unit vector basis
$\{ e_1, \ldots, e_n\}$ for $\cpx^n$ and we identify $\mathbb{C}^n$ with its dual so that $\{ e_1, \ldots, e_n\}$ is also a dual basis for itself. A tensor $T \in \cpx^n \otimes \cpx^n \otimes \cpx^n$ can be written as
\[
T = \sum_{i,j,k=1}^n t_{ijk} \, e_i \otimes e_j \otimes e_k,
\quad t_{ijk}\in \mathbb{C}.
\]
Let $1 \le p \le n-1$ be an integer. From \eqref{eqn:wedge} the linear map
\[
\varphi_T^p: \Big( \bigwedge^p \cpx^n \Big) \otimes \Big(\cpx^n\Big)^\ast
\to  \Big( \bigwedge^{p+1} \cpx^n \Big)  \otimes \cpx^n
\]
is defined by setting
\[
(e_{k_1}\wedge \cdots \wedge e_{k_p}) \otimes e_i \mapsto
\sum_{j,k=1}^n t_{ijk} \,
(e_{k_1}\wedge \cdots \wedge e_{k_p}\wedge e_j) \otimes e_k
\]
and extending it linearly.
By Theorem \ref{thm:lower bound}, we have
\[
\operatorname{brank}(T)  \ge
\rank\, \varphi_T^p \Big/ \binom{n-1}{p}.
\]
We construct the representing matrix $M := M_T^p$
for the linear map $\varphi_T^p$ under the standard basis. The set
\[
\Big\{(e_{k_1}\wedge \cdots \wedge e_{k_p}) \otimes e_i: \,
1\le k_1<\cdots < k_p\le n,1\le i \le n \Big \}
\]
is a basis of $\big( \bigwedge^p \cpx^n \big) \otimes (\cpx^n)^\ast$ and
\[
\Big \{(e_{k_1}\wedge \cdots \wedge e_{k_{p+1}}) \otimes e_k: \,
1\le k_1<\cdots < k_{p+1}\le n,1\le k \le n \Big \}
\]
is a basis of $\big( \bigwedge^{p+1} \cpx^n \big) \otimes \cpx^n$.
The representing matrix $M := M_T^p$ for $\varphi_T^p$ is
$n\binom{n}{p} \times  n\binom{n}{p+1}$.\footnote{
One can also take the transpose to obtain an
$n\binom{n}{p+1} \times n\binom{n}{p}$ matrix.
Since we only concern the rank, both matrices are okay for the proof.
}
We label the rows of $M$ by
\[
(J,i) \, := \, (j_1< \cdots< j_p, \,i)
\]
and label the columns of $M$ by
\[
(J',k)  \,:= \, (j'_1<\dots< j'_{p+1}, \, k).
\]
The entry of $M$ on the $(J,i)$-th row and $(J',k)$-th column is
\[
M_{(J,i),(J',k)} = \epsilon_{J,J',j} \, t_{ijk},
\]
where
\be  \label{notaton:dt:Jj}
\epsilon_{J,J',j} =
\begin{cases}
(-1)^{p-q},\quad  &\text{ if } j'_1 = j_1, \dots, j'_{q-1} = j_{q-1}, \mbox{ and}   \\
  & \quad\,  j'_{q} = j, j'_{q+1} = j_q, \dots, j'_{p+1} = j_p\\
0,\quad &\text{ otherwise.}
\end{cases}
\ee

The following is an example of $M_T^p$
when $T$ is a cubic Hankel tensor.

\begin{example}\label{ex: n=3}
Consider the linear map
$\varphi_\mH^1: \, \cpx^3 \otimes (\cpx^3)^\ast \to
\big( \wedge^2 \cpx^3 \big) \otimes \cpx^3$
for a Hankel tensor $\mH \in \operatorname{H}^3(\mathbb{C}^3)$. Note that
\[
\varphi_\mH^1 (e_j\otimes e_i) \, = \,
\sum_{j',k=1}^3 \mH_{ij'k} \, (e_j\wedge e_{j'}) \otimes e_k.
\]
Let $h$ be the vector as in \reff{eqn:defition of h}, then
\[
\arraycolsep=2.5pt
M_s =  \kbordermatrix{
& (e_1 \wedge e_2) \otimes e_1 & (e_1 \wedge e_2) \otimes e_2 & (e_1 \wedge e_2) \otimes e_3 & (e_1 \wedge e_3) \otimes e_1 & (e_1 \wedge e_3) \otimes e_2 & (e_1 \wedge e_3) \otimes e_3 & (e_2 \wedge e_3) \otimes e_1 &  (e_2 \wedge e_3) \otimes e_2 &  (e_2 \wedge e_3) \otimes e_3  \\
e_1 \otimes e_1 & h_1  & h_2 & h_3 & h_2 & h_3 & h_4 & 0  & 0  & 0\\
e_2 \otimes e_1 & -h_0 &-h_1&-h_2& 0   &  0   &  0   &h_2&h_3&h_4\\
e_3 \otimes e_1 & 0    &0     &0    & -h_0&-h_1&-h_2&-h_1&-h_2&-h_3\\
e_1 \otimes e_2 & h_2  & h_3 & h_4 & h_3 & h_4 & h_5 & 0  & 0  & 0\\
e_2 \otimes e_2 & -h_1&-h_2&-h_3& 0   &  0   &  0   &h_3&h_4&h_5\\
e_3 \otimes e_2 & 0    &0     &0    & -h_1&-h_2&-h_3&-h_2&-h_3&-h_4\\
e_1 \otimes e_3 & h_3  & h_4 & h_5 & h_4 & h_5  & h_6 & 0  & 0  & 0\\
e_2 \otimes e_3 & -h_2&-h_3&-h_4& 0   &  0   &  0   &h_4 &h_5 &h_6\\
e_3 \otimes e_3 & 0    &0     &0    & -h_2&-h_3&-h_4&-h_3&-h_4&-h_5\\
}.
\]
One can verify that $\rank\, M^1_\mH = 8$ when $\mH$ is generic.
Indeed, the sum of the third and the seventh column is equal to
the fifth column of $M_\mH^1$, which implies that $\rank M_\mH^1\le 8$.
Moreover, it is easy to verify that the submatrix obtained
by removing the seventh column from $M_\mH^1$ has full rank $8$.
Indeed, after a permutation on rows, the matrix
$M^1_\mH$ corresponds to the matrix in (3.8.1) of \cite[page 81]{Landsberg2012}
and the Koszul flattening matrix in \cite[Example~4.2]{OedOtt13}.
By Theorem \ref{thm:lower bound}, we can get $\brank(\mH)\ge 4$.
If $\varphi_\mH^2$ is used, we get another lower bound for $\brank(\mH)$.
However, $\rank\, M_\mH^2 \le 3$,
which is worse than the one by using $\varphi_\mH^1$.
\end{example}

\begin{theorem}\label{lemma:lower bound of Hankel tensors}
For a generic Hankel tensor $H\in S^3(\mathbb{C}^n)$,
its border rank is at least $\lfloor \frac{3n-1}{2} \rfloor$.
\end{theorem}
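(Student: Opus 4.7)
The strategy is to apply Theorem~\ref{thm:lower bound} with $p := \lfloor (n-1)/2 \rfloor$, yielding the Koszul flattening bound
\[
\brank(H) \;\ge\; \frac{\rank M_H^p}{\binom{n-1}{p}}.
\]
It therefore suffices to prove that for a generic Hankel tensor $H \in \H^3(\cpx^n)$,
\[
\rank M_H^p \;\ge\; \binom{n-1}{p}\cdot \bigl\lfloor (3n-1)/2\bigr\rfloor.
\]
The base case $n=3$ (so $p=1$) is already handled in Example~\ref{ex: n=3}: there $M_H^1$ is $9\times 9$ with generic rank $8 = \binom{2}{1}\cdot\lfloor 8/2\rfloor$. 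This both serves as a sanity check and indicates the combinatorial pattern to be generalized. One also checks that $\binom{n-1}{p}\,\lfloor (3n-1)/2\rfloor$ is bounded above by $\min\bigl(n\binom{n}{p},\,n\binom{n}{p+1}\bigr)$, so the target rank is attainable in $M_H^p$ in principle.

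Since each entry of $M_H^p$ is a linear function of the vector $h = (h_0,\dots, h_{3(n-1)})$, the subset of $\H^3(\cpx^n)$ on which $\rank M_H^p$ exceeds a given threshold is Zariski-open. Hence, by lower semicontinuity of matrix rank, it is enough to construct a single Hankel tensor $H_0 \in \H^3(\cpx^n)$ whose Koszul flattening matrix attains the required rank.

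For this construction I would take $H_0$ to have an $r_0$-term Vandermonde decomposition
\[
H_0 \;=\; \sum_{i=1}^{r_0} \bigl(a_i^{n-1},\, a_i^{n-2}b_i,\, \ldots,\, b_i^{n-1}\bigr)^{\otimes 3},
\qquad r_0 := \bigl\lfloor (3n-1)/2 \bigr\rfloor,
\]
with the points $[a_i:b_i] \in \mathbb{P}^1$ pairwise distinct and in sufficiently generic position. A direct calculation shows that for any rank-one $T = v^{\otimes 3}$ one has $\rank \varphi_T^p = \binom{n-1}{p}$ (its image is spanned by $(\omega \wedge v) \otimes v$ for $\omega \in \bigwedge^p\cpx^n$). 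Thus the upper bound $\rank M_{H_0}^p \le r_0 \binom{n-1}{p}$ is automatic, and the heart of the argument is to show that the $r_0$ summand contributions are \emph{linearly independent} inside $M_{H_0}^p$, so that equality holds.

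The main obstacle is precisely this independence claim. The plan is to exhibit an explicit square submatrix of $M_{H_0}^p$ of size $r_0 \binom{n-1}{p}$, indexed by multi-indices compatible with the Vandermonde parametrization of the summands, and to evaluate its determinant. Because each rank-one summand is parametrized by a point on the rational normal curve $v_{n-1}(\mathbb{P}^1) \subset \mathbb{P}^{n-1}$, one expects the determinant to factor, after row and column manipulations, into classical Vandermonde determinants in the ratios $b_i/a_i$, multiplied by a nonzero combinatorial constant. Since Vandermonde determinants do not vanish when the $[a_i:b_i]$ are distinct, the independence holds generically. Combining with Theorem~\ref{thm:lower bound} then yields $\brank(H_0) \ge \lfloor (3n-1)/2\rfloor$, and semicontinuity propagates this bound to a generic Hankel tensor $H$, completing the proof.
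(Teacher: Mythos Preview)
Your overall strategy is sound and is the same as the paper's at the top level: invoke Theorem~\ref{thm:lower bound} for a well-chosen $p$ and use lower semicontinuity of matrix rank to reduce to a single explicit Hankel tensor. The paper takes $p=\lfloor n/2\rfloor$ rather than your $p=\lfloor (n-1)/2\rfloor$; for even $n$ these differ by one, but the two flattenings are dual and give the same bound, so this is harmless.

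Where you diverge from the paper is in the choice of witness tensor. You take $H_0$ to be a generic $r_0$-term Vandermonde sum and aim to show that the $r_0$ rank-one contributions to $\varphi_{H_0}^p$ have images in direct sum. The paper instead takes the \emph{sparse} Hankel tensor with $h_{r_0}=1$ and $h_l=0$ otherwise, and analyses the resulting $M_{\mH}^p$ explicitly: it isolates a full-rank diagonal block $D$ of size $n\binom{n-1}{p}$ and then proves (Lemma~\ref{lemma: technical lemma}, with a lengthy combinatorial argument in the Appendix) that the Schur complement $-T_2DT_1$ has the remaining rank $(p+1)\binom{n-1}{p+1}$.

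The genuine gap in your proposal is that the entire content of the theorem is concentrated in the step you leave as an expectation. Saying the determinant of a suitable submatrix ``one expects\dots to factor into classical Vandermonde determinants\dots multiplied by a nonzero combinatorial constant'' is not a proof: verifying that this constant is nonzero is exactly the hard part. Indeed, for $r_0>n$ the vectors $v_i$ on the rational normal curve are linearly dependent, so the subspaces $\{\omega\wedge v_i:\omega\in\bigwedge^p\cpx^n\}\otimes\langle v_i\rangle$ are not obviously in direct sum inside $(\bigwedge^{p+1}\cpx^n)\otimes\cpx^n$; one cannot simply cite a Vandermonde nonvanishing. Any honest execution of your plan will require either (i) a nontrivial identification of a square submatrix and a careful determinant evaluation, or (ii) a degeneration/specialisation argument---and the most natural specialisation is precisely the paper's sparse $H_0$, which then forces you into the same combinatorics as Lemma~\ref{lemma: technical lemma}. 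As written, your proposal identifies the right obstacle but does not overcome it.
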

\begin{proof}
Let $r := \lfloor \frac{3n-1}{2} \rfloor$ and
$p := \lfloor \frac{n}{2} \rfloor$. By Theorem~\ref{thm:lower bound},
it is sufficient to prove that
the rank of the linear map $\varphi_\mH^p: \big( \bigwedge^p \cpx^n \big) \otimes (\cpx^n)^\ast
\to \big( \bigwedge^{p+1} \cpx^n \big) \otimes \cpx^n$
has rank $\binom{n-1}{p}r$.
By the lower semi-continuity\footnote{
To be more precise,
if  $M(x_1,\dots, x_k)$ is an $m\times n$ matrix polynomial
and $\rank\,M(a_1,\dots,a_k) = r$
for some $(a_1,\dots, a_k)\in \mathbb{C}^k$,
then there exists a Zariki open dense subset $U$
of $\mathbb{C}^k$ such that $\rank M(b_1,\dots, b_k) \ge r$
for all $(b_1,\dots, b_k)\in U$.
}
of the matrix rank function,
it is sufficient to prove that $\rank (M_\mH^p) \ge \binom{n-1}{p}r$
for some order three Hankel tensor $\mH$.
To do this, we take $\mH = (\mH_{ijk})$, where
\[
\mH_{ijk} = \begin{cases}
1,\quad \text{if}\quad i+j+k -2 = r, \\
0,\quad \text{otherwise}.
\end{cases}
\]
The tensor
$(e_{j_1}\wedge\cdots \wedge e_{j_p}) \otimes e_i $ is mapped by $\varphi_\mH^p$ to
\begin{equation}\label{eqn:defn of vaphi_H^p}
\sum_{j=1}^n (-1)^{p-q} (e_{j_1}\wedge \cdots \wedge e_{j_q} \wedge e_j
\wedge e_{j_{q+1}} \wedge \cdots \wedge e_{j_p}) \otimes e_{r +2  - i - j},
\end{equation}
where the summation is over
\[
1\le j_1 < \cdots < j_q < j  < j_{q+1} < \cdots < j_p \le n.
\]
We set $e_{r + 2 -i-j} = 0$ if $i + j   \ge r  + 2$ or $ i + j \le  \lfloor \frac{n+1}{2} \rfloor$.
Hence, the summand in \eqref{eqn:defn of vaphi_H^p} is non-zero if and only if
\[
\max \{1,r  - i - n + 2\} \le j \le \min \{n,r - i + 1\} \quad \text{and} \quad
j \not\in \{ j_1,\dots, j_p\}.
\]
%
%
For given $j_1 < \cdots < j_q < j < j_{q+1} <\cdots < j_p$, we let
\[
J := (j_1 < \cdots < j_p)~\text{and}~
J' := (j_1 < \cdots < j_q < j < j_{q+1} <\cdots < j_p).
\]
By \eqref{eqn:defn of vaphi_H^p}, the matrix $M_\mH^p$
has a block that is
\[
T_{J,J'}  \coloneqq  \epsilon_{J,J'} T_{j - \lfloor \frac{n+1}{2} \rfloor} =
\]
\[
\arraycolsep=2.5pt
 \kbordermatrix{
       & (J',n) & (J',n-1) & \cdots  & (J',r-j+2) & (J',r-j + 1)    & (J',r-j )    & \cdots & (J',1) \\
(J,1) & 0     & 0          & \cdots  & 0        & (-1)^{p-q} & 0               & \cdots & 0\\
(J,2) & 0     & 0          & \cdots  & 0        & 0               & (-1)^{p-q} & \cdots & 0\\
\vdots  & \vdots     & \vdots          & \ddots  & \vdots        & \vdots             & \vdots & \ddots & \vdots\\
(J,r-j+1) & 0     & 0          & \cdots  & 0        & 0               & 0 & \cdots & (-1)^{p-q}\\
\vdots  & \vdots     & \vdots          & \ddots  & \vdots        & \vdots             & \vdots & \ddots & \vdots\\
(J,n-1) & 0     & 0          & \cdots  & 0        & 0               & 0& \cdots & 0\\
(J,n) & 0     & 0          & \cdots  & 0        & 0               &0 & \cdots & 0\\
},
\]
where $\epsilon_{J,J'} = (-1)^{p-q}$ and $T_{j - \lfloor \frac{n+1}{2} \rfloor}$ is the $n\times n$ Toeplitz matrix whose $(i,k)$-th entry $t_{ik}$ is defined by
\[
t_{ik} = \begin{cases}
1, \quad \text{ if } k-i = j-\lfloor \frac{n+1}{2} \rfloor, \\
0,\quad \text{ otherwise}.
\end{cases}
\]
Clearly, we have
\[
\rank (T_{J,J'}) = \begin{cases}
r - j +1,   &\text{if}\quad  r - j +1 \le n,\\
2n- (r - j  + 1), \quad & \text{otherwise}.
\end{cases}
\]
In particular, if $J' = J \cup \{ \lfloor \frac{n+1}{2} \rfloor\}$, then $\rank (T_{J,J'}) = n$, i.e., the matrix $T_{J,J'}$ is the identity matrix up to a sign. Let $C_0$ be the set of sequences $J' = (j_1< \cdots < j_{p+1})$ such that $\lfloor \frac{n+1}{2} \rfloor \in J'$ and let $R_0$ be the set of sequences $J = (j_1<\cdots < j_p)$ such that $\lfloor \frac{n+1}{2} \rfloor  \not\in  J$. For each $J\in R_0$, there is a unique $J'\in C_0$ such that $J' = J \cup \{\lfloor \frac{n+1}{2} \rfloor \}$. We also let $C$ be the set of sequences $J' = (j_1< \cdots  < j_{p+1})$ such that $ \lfloor \frac{n+1}{2} \rfloor \not\in J'$ and let $R$ be the set of sequences $J = (j_1<\cdots < j_p)$ such that $\lfloor \frac{n+1}{2} \rfloor  \in  J$. Then the matrix $M_{\mH}^p$ may be visualized as follows:
\[
\arraycolsep=2.5pt
M_{\mH}^p = \kbordermatrix{
       & C_0 & C  \\
R_0 & D & T_1 \\
R     & T_2 & 0 \\
},
\]
where $D$ is the submatrix of $M_\mH^p$ obtained by taking $J$'s from $R_0$ and $J'$'s from $C_0$ and $T_1,T_2$ are defined in the same way. Since for each $J\in R_0$ there exists the unique $J'\in C_0$ such that $J' = J\cup \{ \lfloor \frac{n+1}{2} \rfloor \}$, we see that $D$ is actually a block diagonal matrix where each diagonal block is of the form $T_{J,J'}$, which is the $ n\times n $ identity matrix (up to a sign) because $J' = J\cup \{ \lfloor \frac{n+1}{2} \rfloor \}$. Indeed, $D$ is a diagonal matrix whose diagonal entries are either $1$ or $-1$. Moreover, the cardinality of both $C_0$ and $R_0$ is equal to
\[
\# C_0 = \# R_0 = \binom{n-1}{p},
\]
this implies that $D$ is a full rank $\binom{n-1}{p}n \times \binom{n-1}{p}n$ matrix.

Next, we apply column operations to $M_\mH^p$ to make it a triangular matrix. More precisely, we compute
\[
M_{\mH}^p \begin{bmatrix}
\Id_{\binom{n-1}{p}n}    & -D^{-1} T_1 \\
0     &  \Id_{\binom{n-1}{p+1}n}
\end{bmatrix} = \begin{bmatrix}
D & 0 \\
T_2 & -T_2 D^{-1} T_1
\end{bmatrix} = \begin{bmatrix}
D & 0 \\
T_2 & -T_2 D T_1
\end{bmatrix}.
\]
Here $\Id_m$ is the $m\times m$ identity matrix and the second equality follows from the fact that $D$ is a diagonal matrix whose diagonal entries are $1$ or $-1$. We denote by $M$ the matrix $-T_2 D T_1$ and by Lemma \ref{lemma: technical lemma}, we see that
\[
\rank M = \binom{n-1}{p+1}(p+1).
\]
Therefore, $M_\mH^p$ has the rank
\[
\rank M_\mH^p = \binom{n-1}{p}n + \binom{n-1}{p+1}(p+1) = \binom{n-1}{p}r.
\]
\end{proof}

\begin{lemma}\label{lemma: technical lemma}
Let $\mathcal{H}, M_\mathcal{H}^p,T_1,T_2,D$ and $M$ be as in the proof of Theorem \ref{lemma:lower bound of Hankel tensors}. The rank of $M$ is equal to $\binom{n-1}{p+1}(p+1)$.
\end{lemma}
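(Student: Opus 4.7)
The plan is to analyze the block structure of $M=-T_2D^{-1}T_1$ column-block by column-block. Write $m=\lfloor(n+1)/2\rfloor$, and index the rows of $M$ by $\tilde R\times[n]$ (with $\tilde R$ the $(p-1)$-subsets of $[n]\setminus\{m\}$, via the bijection $J\mapsto J\setminus\{m\}$ from $R$ to $\tilde R$) and the columns by $\tilde C\times[n]$ (with $\tilde C$ the $(p+1)$-subsets of $[n]\setminus\{m\}$). Distinct $J'\in\tilde C$ yield disjoint column-groups, so $\operatorname{rank}(M)=\sum_{J'\in\tilde C}\operatorname{rank}(M_{J'})$, and it suffices to show each column-block $M_{J'}$ has rank exactly $p+1$.

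For fixed $J'$, the row-block of $M_{J'}$ indexed by $\tilde J$ is nonzero iff $\tilde J\subset J'$; writing $J'\setminus\tilde J=\{a,b\}$ with $a<b$ and expanding the product $T_2D^{-1}T_1$ via the two possible orderings of $(a,b)$, the block equals
\[
\sigma_1\, A_aJ_nA_b \,+\, \sigma_2\, A_bJ_nA_a,
\]
where $A_s$ is the $n\times n$ matrix with $(A_s)_{ik}=\delta_{i+k,s}$ and $J_n=A_{n+1}$ is the antidiagonal identity (which comes from $D$, because the specific tensor $\mathcal{H}$ of Theorem~\ref{lemma:lower bound of Hankel tensors} satisfies $\mathcal{H}_{imk}=\delta_{i+k,n+1}$, as one checks from $r+2-m=n+1$ for both parities of $n$). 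A direct computation with the Koszul signs from \eqref{notaton:dt:Jj} shows $\sigma_1=-\sigma_2$, so each block equals $\pm(A_aJ_nA_b-A_bJ_nA_a)$; this matrix is supported on the antidiagonal $i+k=2r+3-n-a-b$.

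Next one verifies the following dichotomy from the index-range constraints $l=r+2-a-i\in[1,n]$ and $k\in[1,n]$ (and symmetrically for $b$): if $a$ and $b$ lie on the same side of $m$ in $[n]$, the $k$-constraint dominates and the two $i$-supports coincide, so $A_aJ_nA_b=A_bJ_nA_a$ and the block vanishes; if $a<m<b$, the intervals overlap but extend to opposite sides, and the difference has exactly two surviving entries at the extreme positions of the antidiagonal. Splitting $J'=J'_{<}\sqcup J'_{>}$ with $J'_{<}=J'\cap[1,m-1]$, $J'_{>}=J'\cap[m+1,n]$ and $|J'_{<}|+|J'_{>}|=p+1$, the surviving row-blocks are exactly the $|J'_{<}|\cdot|J'_{>}|$ blocks indexed by $(a,b)\in J'_{<}\times J'_{>}$, each of rank $2$. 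A careful enumeration of the two boundary positions across these blocks shows that the accumulated nonzero entries occupy exactly $p+1$ columns of $M_{J'}$, and each of these $p+1$ columns contains a nonzero entry in a different row-block, so the column vectors are linearly independent. This gives $\operatorname{rank}(M_{J'})=p+1$, and summing over $\tilde C$ yields $\operatorname{rank}(M)=(p+1)\binom{n-1}{p+1}$.

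The main obstacles are (i) the sign identity $\sigma_1=-\sigma_2$, which requires tracking the three Koszul signs associated with the wedge-insertions of $a$ (via $T_2$), $m$ (via $D$), and $b$ (via $T_1$) and noting that swapping the roles of $a$ and $b$ swaps two of the three insertions; and (ii) the final combinatorial step showing that the $2|J'_{<}||J'_{>}|$ boundary entries, distributed across the $|J'_{<}||J'_{>}|$ row-blocks, accumulate to rank precisely $p+1$ (neither more nor less). A potentially cleaner reformulation is to recognize $M_{J'}$ as a Koszul-type differential on a $(p+1)$-dimensional exterior algebra built from $J'$, whose rank is standardly $p+1$; however, that identification still requires essentially the same sign computation.
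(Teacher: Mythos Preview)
Your argument has a genuine gap at the very first step. You assert that because the columns of $M$ are partitioned into groups indexed by $J'\in\tilde C$, we have $\operatorname{rank}(M)=\sum_{J'}\operatorname{rank}(M_{J'})$. But a column partition only gives the inequality $\operatorname{rank}(M)\le\sum_{J'}\operatorname{rank}(M_{J'})$; equality requires the column spaces of the $M_{J'}$ to be \emph{linearly independent} as subspaces, and you give no argument for this. The independence is not automatic: distinct $J'_1,J'_2\in\tilde C$ can share a common $(p-1)$-subset $\tilde J$, so the corresponding column-blocks $M_{J'_1}$ and $M_{J'_2}$ have nonzero entries in overlapping row-blocks (see for instance the paper's Example~A.2 with $n=6$, where the row $(1,3,5)$ meets both $(1,2,4,5)$ and $(1,2,5,6)$). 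Proving that these overlaps do not create dependencies is precisely the hard part of the lemma, and your proposal sidesteps it.

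For comparison, the paper's proof does \emph{not} partition by $J'$. It first shows that $M$ is block diagonal when partitioned by the integer $s$ counting how many elements of $J'$ lie below $\lfloor(n+1)/2\rfloor$; this is a coarser partition than yours, but one for which row supports genuinely are disjoint, so the rank does split. Within each block $M_s$ there are many $J'$, and the paper establishes full column rank of $M_s$ by reducing modulo $2$ and running an explicit triangular elimination, locating at each step a row with a single nonzero entry (Steps~4--6 and the examples in the appendix). Your later ``careful enumeration'' of the $2|J'_{<}||J'_{>}|$ boundary entries yielding exactly $p+1$ independent columns is also not worked out, but even if it were, it would only certify $\operatorname{rank}(M_{J'})=p+1$ and would not address the missing independence across different $J'$.
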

The proof of Lemma~\ref{lemma: technical lemma}
will be given in the Appendix.

\begin{theorem}\label{thm: rank order three Hankel}
For a generic Hankel tensor $\mathcal{H}\in \Hank^3(\mathbb{C}^n)$, we have
\be \label{cuibc:allrank=}
\operatorname{brank}(\mathcal{H}) = \brank_S (\mathcal{H}) = \operatorname{rank}(\mathcal{H}) = \operatorname{rank}_S( \mathcal{H})= \rank_V( \mathcal{H}) = \left \lfloor \frac{3n-1}{2} \right \rfloor
\ee
\end{theorem}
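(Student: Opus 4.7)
The plan is to squeeze all five ranks between two values that we already know. From Corollary~\ref{thm: V-rank} applied to $m=3$, a generic Hankel tensor $\mathcal{H}\in \Hank^3(\mathbb{C}^n)$ satisfies
\[
\rank_V(\mathcal{H}) \;=\; \left\lceil \frac{3n-2}{2} \right\rceil \;=\; \left\lfloor \frac{3n-1}{2} \right\rfloor,
\]
which supplies the upper bound on the top of the chain \reff{rl:b<r<s<v}. The lower bound on the other end is exactly what Theorem~\ref{lemma:lower bound of Hankel tensors} provides: for a generic $\mathcal{H}$, we have $\brank(\mathcal{H}) \geq \lfloor (3n-1)/2 \rfloor$.

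First, I would assemble the inequality $\brank(\mathcal{H})\le \brank_S(\mathcal{H})\le \rank_S(\mathcal{H})\le \rank_V(\mathcal{H})$ from \reff{rl:b<r<s<v} (plus the inequality $\brank_S \leq \rank_S$ valid for any symmetric tensor) together with $\brank(\mathcal{H})\le \rank(\mathcal{H})\le \rank_S(\mathcal{H})$. Since genericity is preserved under intersection of the (finitely many) Zariski-open conditions used by Corollary~\ref{thm: V-rank} and Theorem~\ref{lemma:lower bound of Hankel tensors}, there is a nonempty Zariski-open subset of $\Hank^3(\mathbb{C}^n)$ where both statements apply simultaneously.

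On that open set, the chain becomes
\[
\left\lfloor \frac{3n-1}{2} \right\rfloor \;\le\; \brank(\mathcal{H}) \;\le\; \brank_S(\mathcal{H}) \;\le\; \rank_S(\mathcal{H}) \;\le\; \rank_V(\mathcal{H}) \;=\; \left\lfloor \frac{3n-1}{2} \right\rfloor,
\]
and similarly $\brank(\mathcal{H})\le \rank(\mathcal{H})\le \rank_S(\mathcal{H})$ gets trapped between the same two bounds. Every inequality is therefore an equality, giving \reff{cuibc:allrank=}.

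There is no serious obstacle here: all of the technical work has been done in Theorem~\ref{lemma:lower bound of Hankel tensors} (the Koszul flattening/Young flattening lower bound specialized to cubic Hankel tensors, whose proof required the detailed rank analysis of $M_\mathcal{H}^p$ via Lemma~\ref{lemma: technical lemma}) and in Corollary~\ref{thm: V-rank} (which follows from Sylvester's theory via the isomorphism $\pi$ of Lemma~\ref{lemma: V-rank = S-rank}). The only point worth a line of care is verifying that $\lceil (3n-2)/2\rceil = \lfloor (3n-1)/2\rfloor$ in both parities of $n$, which is immediate.
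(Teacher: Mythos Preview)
Your proposal is correct and follows essentially the same route as the paper's own proof: squeeze the chain of ranks between the Vandermonde rank from Corollary~\ref{thm: V-rank} (with $m=3$) on top and the border-rank lower bound from Theorem~\ref{lemma:lower bound of Hankel tensors} on the bottom, using the inequalities in \reff{rl:b<r<s<v} together with $\brank_S\le\rank_S$. Your extra remark about intersecting the Zariski-open genericity loci is a nice touch that the paper leaves implicit.
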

\begin{proof}
By Theorem~\ref{thm: V-rank}, we have seen that $\rank_V(\mathcal{H})$ is
\[
\left \lceil \frac{3n-2}{2}  \right \rceil =
\left \lceil \frac{3n}{2}  \right \rceil -1 =
\left \lfloor \frac{3n-1}{2} \right \rfloor.
\]
Theorem~\ref{lemma:lower bound of Hankel tensors} implies that
$\brank(\mH) \geq \lfloor \frac{3n-1}{2} \rfloor $ when $\mH$ is generic.
Since we always have
\[
\operatorname{brank}(\mathcal{H}) \le \operatorname{rank}(\mathcal{H})
\le \operatorname{rank}_S( \mathcal{H}) \le \rank_V( \mathcal{H}),
\]
\[
\brank(\mathcal{H}) \le \brank_S(\mathcal{H}) \le \rank_S(\mathcal{H}),
\]
the conclusion follows directly.
\end{proof}

%
%

Theorem~\ref{thm: rank order three Hankel}
clearly implies the following.

\begin{corollary}
For a generic Hankel tensor of order three,
Algorithm \ref{algorithm: 1} gives a decomposition
that achieves its cp rank, symmetric rank,
border rank, symmetric border rank, and Vandermonde rank.
\end{corollary}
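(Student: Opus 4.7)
The plan is to chain together the correctness of Algorithm~\ref{algorithm: 1} with the rank equalities in Theorem~\ref{thm: rank order three Hankel}. On any Hankel tensor $\mH$, the algorithm outputs a genuine Vandermonde decomposition whose length equals $\rank_V(\mH)$; this is already guaranteed by the construction of the algorithm together with Theorem~\ref{thm: symmetric rank}, Theorem~\ref{thm:Sylvester}, and Lemma~\ref{lemma: V-rank = S-rank}. Thus nothing new needs to be proved about the algorithm itself; the content of the corollary is simply that the number this algorithm produces coincides with each of the other four ranks in the generic cubic case.

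Next I would pass to genericity. For a generic $\mH \in \H^3(\cpx^n)$, the associated binary form $h$ of degree $d=3(n-1)$ is also generic, so $C_{d-s,s}(h)$ attains its maximal rank $r = \lceil (d+1)/2 \rceil = \lfloor (3n-1)/2 \rfloor$, and the kernel vector $(f_0, \ldots, f_r)$ from Step~2 determines a binary form $f(x,y)$ with no multiple roots (the discriminant being a nonvanishing polynomial condition on $h$). Hence Algorithm~\ref{algorithm: 1} enters the branch $\rank_V(\mH) = r$ and, via Steps~4--5, produces a Vandermonde decomposition
\[
\mH = \sum_{i=1}^{r} \lmd_i \bigl(a_i^{n-1}, a_i^{n-2}b_i, \ldots, b_i^{n-1}\bigr)^{\otimes 3}
\]
of length exactly $r = \lfloor (3n-1)/2 \rfloor$.

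Finally I would invoke Theorem~\ref{thm: rank order three Hankel}, which asserts that for a generic cubic Hankel tensor the cp, symmetric, border, symmetric border, and Vandermonde ranks all coincide and equal $\lfloor (3n-1)/2 \rfloor$. Since each Vandermonde summand is a symmetric rank-one tensor (and hence also a cp rank-one tensor), the decomposition produced by the algorithm is simultaneously a Vandermonde, symmetric, and cp decomposition; because its length matches the common value of all five ranks, it witnesses each one at the same time. I do not foresee any serious obstacle: the only thing to verify is that the open condition ``$f$ has no multiple roots'' that drives Step~3 is compatible with the Zariski-open condition in Theorem~\ref{thm: rank order three Hankel}, which is immediate since the intersection of two nonempty Zariski-open subsets of $\H^3(\cpx^n)$ is again nonempty and Zariski-open, and the word ``generic'' in the statement refers precisely to this intersection.
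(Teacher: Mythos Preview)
Your proposal is correct and follows the same approach as the paper: the paper states this corollary as an immediate consequence of Theorem~\ref{thm: rank order three Hankel} without giving a separate proof, and you have simply filled in the obvious details (Algorithm~\ref{algorithm: 1} always returns a Vandermonde decomposition of length $\rank_V(\mH)$, and Theorem~\ref{thm: rank order three Hankel} makes this length equal to all the other ranks for generic cubic $\mH$). Your remark about intersecting the two Zariski-open conditions is a nice explicit justification of the word ``generic'' that the paper leaves implicit.
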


In Theorem~\ref{thm: rank order three Hankel},
we can get concrete conditions for the equalities there to hold.
In Algorithm~\ref{algorithm: 1}, by \reff{rl:b<r<s<v}
and Theorem~\ref{thm:lower bound},
we know \reff{cuibc:allrank=} holds if
\bit
\item $\rank M_\mH^p \geq \lfloor (3n-1)/2 \rfloor \binom{n-1}{p}$;
\item when $n$ is odd, $\rank\, C_{d-s,s}(h) = 1 + s$;
\item when $n$ is even, $\rank \, C_{d-s,s}(h) = 1+s$ and
the binary form $f(x,y)$ has no multiple roots.

\eit

In the following, we give some examples that
the conclusion of Theorem~\ref{thm: rank order three Hankel}
may not hold for {\it non-generic} Hankel tensors.

\begin{example}
Consider the Hankel tensor $\mathcal{H} \in \H^3(\cpx^3)$ such that
\[
\mH_{ijk} = \begin{cases}
1, \quad \text{ if }i + j + k = 8, \\
0,\quad \text{ otherwise}.
\end{cases}
\]
The polynomial associated to $\mH$ is $3yz^2$. We have $d=6,s=3$,
$h = (0,0,0,0,0,1,0)$ and $r = \rank C_{d-s,s} = 2$.
By Algorithm \ref{algorithm: 1}, we get
$\rank_V (\mathcal{H}) = 6$. However, $\rank_S(\mathcal{H}) = 3$
and $\brank_S(\mathcal{H}) = 2$ by \cite[Table $1$]{LT2010}
or \cite[Theorem 1.1 \& 1.2]{Oeding2016}.
However,
\[
\frac{3n-1}{2}   = 4 > \rank (\varphi_\mathcal{H}^1) = 2.
\]
Hence, $\brank(\mathcal{H}) =2$ and $\rank(\mathcal{H}) = 2$ or $3$.
In fact, $\rank(\mathcal{H}) = 3$.
If otherwise $\rank(\mathcal{H}) =2 $, then $\mathcal{H}$ has a decomposition
\[
\mathcal{H} = u_1 \otimes v_1 \otimes w_1 + u_1 \otimes v_2 \otimes w_2, \quad u_i,v_i,w_i\in \mathbb{C}^3,i=1,2.
\]
One may use Macaulay2 \cite{M2} to verify that
such a decomposition does not exist.
\end{example}

\begin{example}
Let $\mathcal{H}$ be the Hankel tensor as in
Example~\ref{x^{m-1}y + z^m} for $m =3$.
We know $\rank_V(\mathcal{H}) = 5$.
By \cite[Table~1]{LT2010},
\[
\rank_S(\mathcal{H}) = 4, \quad \brank_S(\mathcal{H}) = 3.
\]
Moreover, $\rank(\varphi^1_\mathcal{H}) = 3$, hence
\[
\brank(\mathcal{H}) = 3,\quad \rank(\mathcal{H}) = 3 \text{ or }4.
\]
Since the monomials $x^2y$ and $z^3$ do not share a common variable,
by \cite[Sec.~9.1.4]{Landsberg2012}, we have
\[
\rank (\mathcal{H}) = \rank (x^2 y) + \rank (z^3) = 3 +1 =4 = \rank_S(\mathcal{H}).
\]
\end{example}

\begin{example}
Consider the special case of Example~\ref{x^{m-2}y^2 + x^{m-1}z}
with $m = 3$. We have seen that
$\rank_V(\mathcal{H})  = 5$, $\brank_S(\mathcal{H}) = 3$.
By \cite[Theorem 10.2]{LT2010}\footnote{
\cite[Theorem 10.2]{LT2010} states that $m\le \rank_S(\mathcal{H}) \le 2m-1$ in general,
but by the remark after Theorem 10.2, $\rank_S(\mathcal{H})$
attains the upper bound $5$ if $m=3$.
}, $\rank_S(\mathcal{H}) =5$. One can check that $\rank (\varphi_\mathcal{H}^1) = 3$, hence
\[
3 =\brank(\mathcal{H}) = \brank_S(\mathcal{H}) <  \rank_V(\mathcal{H}) =5.
\]
This implies that $\rank(\mathcal{H}) =3,4$ or $5$.
Indeed, we may verify again by Macaulay2 \cite{M2} that
$
\rank(\mathcal{H}) = 5 = \rank_S(\mathcal{H}).
$
\end{example}

\section{Conclusions and Questions}
\label{sec:openqs}

The main results of this article are:

\begin{enumerate}

\item [1)] We give an algorithm (Algorithm \ref{algorithm: 1})
for computing Vandermonde rank decompositions for all Hankel tensors. In particular,
the Vandermonde rank of a generic
$\mathcal{H} \in \Hank^m(\mathbb{C}^n)$
is $\lceil \frac{m(n-1)+1}{2} \rceil $
(Proposition~\ref{thm: V-rank}).

\item [2)] We can determine the cp rank, symmetric rank,
border rank and symmetric border rank
of a Hankel tensor, under some concrete conditions
(Theorem \ref{thm:Srank:bSrank}, Theorem \ref{thm:rkrl:genodd}).

\item [3)] We prove that the cp rank, symmetric rank,
border rank, symmetric border rank and Vandermonde rank are all the same
for a generic Hankel tensor of order even or three
(Corollary~\ref{comon:true:even},
Theorem~\ref{thm: rank order three Hankel}).

\end{enumerate}

However, we do not know much about the rank relations for
generic Hankel tensors of odd order $m \geq 5$.
Naturally, we pose the following question:

\begin{question}\label{quest6.1}
For an odd order $m\ge 5$ and for a generic
Hankel tensor $\mH \in \H^m(\cpx^n)$, do we have
\[
\rank (\mH) = \rank_S(\mH) = \brank (\mH) = \brank_S(\mH) = \rank_V(\mH)?
\]
\end{question}

We point out that the answer to Question~\ref{quest6.1}
is ``no" if we replace ``generic" by ``all",
as we have already seen in the earlier examples.
However, we conjecture that
the answer to Question~\ref{quest6.1} is yes.

\begin{conjecture}\label{conjecture1}
The answer to Question~\ref{quest6.1} is yes.
\end{conjecture}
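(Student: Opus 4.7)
The plan is to extend the strategy used for order three (Theorem~\ref{thm: rank order three Hankel}) to all odd orders $m=2m_0+1\geq 5$. Since
\[
\brank(\mH) \leq \brank_S(\mH) \leq \rank_S(\mH) \leq \rank_V(\mH), \qquad \brank(\mH) \leq \rank(\mH) \leq \rank_S(\mH),
\]
and $\rank_V(\mH) = m_0(n-1) + \lceil n/2 \rceil$ for generic $\mH$ by Corollary~\ref{thm: V-rank}, it is enough to establish the matching lower bound $\brank(\mH) \geq m_0(n-1) + \lceil n/2 \rceil$ for a generic $\mH$. The catalecticant flattening $\mbox{F}(\mH) = C_{d-l,l}(h)$ from Lemma~\ref{lm:matF(H)} only yields $\brank(\mH) \geq (n-1)m_0 + 1$, which falls short by $\lceil n/2 \rceil - 1$ whenever $n\geq 3$, so a Koszul-type (Young) flattening is needed.

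The idea is first to symmetrically flatten $\mH \in \S^m(\cpx^n)$ into an order-three tensor $\widetilde{\mH}$ living in $A \otimes B \otimes C$ with $A = C = \T^{m_0}(\cpx^n)$ and $B = \cpx^n$, so that any border-rank decomposition of $\mH$ in $\T^m(\cpx^n)$ descends to one of $\widetilde{\mH}$, giving $\brank(\widetilde{\mH}) \leq \brank(\mH)$. The symmetry of $\mH$ forces $\widetilde{\mH}$ to live in the subspace $\S^{m_0}(\cpx^n) \otimes \cpx^n \otimes \S^{m_0}(\cpx^n)$, so the matrix $M_{\widetilde{\mH}}^p$ has many repeated rows and columns, and its rank coincides with that of the reduced map
\[
\varphi_{\mH}^p \colon \S^{m_0}(\cpx^n)^* \otimes \bigwedge^p \cpx^n \longrightarrow \S^{m_0}(\cpx^n) \otimes \bigwedge^{p+1} \cpx^n.
\]
Theorem~\ref{thm:lower bound} then yields $\brank(\mH) \geq \rank(\varphi_\mH^p)/\binom{n-1}{p}$. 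Taking $p = \lfloor n/2 \rfloor$, as in the order-three case, the expected rank of $\varphi_\mH^p$ is $\binom{n-1}{p}\bigl(m_0(n-1) + \lceil n/2 \rceil\bigr)$, exactly matching the desired bound.

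By lower semi-continuity of matrix rank, it then suffices to exhibit a single Hankel tensor $\mH_0 \in \Hank^m(\cpx^n)$ with $\rank(\varphi_{\mH_0}^p) \geq \binom{n-1}{p}\bigl(m_0(n-1) + \lceil n/2 \rceil\bigr)$. Following the template of Theorem~\ref{lemma:lower bound of Hankel tensors}, one would take $\mH_0$ supported on a single diagonal hyperplane $i_1 + \cdots + i_m = c$, with $c$ tuned so that the associated support corresponds to $r = m_0(n-1) + \lceil n/2 \rceil$. The matrix $M_{\mH_0}^p$ then naturally decomposes into blocks indexed by pairs $(J,\alpha)$, where $J\subset\{1,\dots,n\}$ has cardinality $p$ or $p+1$ and $\alpha$ ranges over monomials of degree $m_0$ in $n$ variables; splitting these indices according to whether $\lfloor (n+1)/2 \rfloor \in J$ should isolate an invertible diagonal block whose Schur complement encodes the remaining rank contribution.

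The hard part will be the Schur complement rank computation. Already for $m=3$, this step is intricate and is outsourced to the appendix (Lemma~\ref{lemma: technical lemma}). For $m\geq 5$, the Schur complement becomes a block-structured Hankel--Toeplitz matrix whose blocks are themselves catalecticants indexed by pairs of degree-$m_0$ monomials in $n$ variables, so determining its rank amounts to a combinatorial identity that substantially generalizes Lemma~\ref{lemma: technical lemma}. Pinning down this rank --- equivalently, the generic rank of the higher-order Young flattening applied to the chosen monomial Hankel tensor --- is the central obstacle, and is presumably why the authors leave the general odd case only as a conjecture.
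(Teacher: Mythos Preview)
This statement is a \emph{conjecture} in the paper, not a theorem: the authors do not supply a proof, and indeed they explicitly leave the odd case $m\geq 5$ open. So there is no ``paper's own proof'' to compare against.

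Your submission is not a proof either, and you say as much in your final paragraph. What you have written is a sensible outline of the natural strategy---flatten $\mH$ to an order-three tensor in $\S^{m_0}(\cpx^n)\otimes\cpx^n\otimes\S^{m_0}(\cpx^n)$, apply the Koszul/Young flattening of Theorem~\ref{thm:lower bound} with $p=\lfloor n/2\rfloor$, and hope the resulting matrix achieves the rank $\binom{n-1}{p}\bigl(m_0(n-1)+\lceil n/2\rceil\bigr)$ on a well-chosen Hankel tensor---together with an honest acknowledgement that the crucial rank computation generalizing Lemma~\ref{lemma: technical lemma} is not carried out. That is exactly the shape of the order-three argument in Section~\ref{sec:order=3}, and your diagnosis of the obstruction is accurate: the combinatorics of the Schur complement become substantially more intricate when the outer factors jump from $\cpx^n$ to $\S^{m_0}(\cpx^n)$.

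In short, there is no gap to name beyond the one you already name yourself, but what you have is a research proposal, not a proof. As a write-up accompanying a conjecture it is appropriate; as a proof of the conjecture it is incomplete by your own admission.
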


Finally, we conjecture that Comon's conjecture remains true
at least for Hankel tensors, although Y.~Shitov provides a counterexample in \cite{Shitov2017} which implies that Comon's conjecture does not hold for all symmetric tensors.

\begin{conjecture} \label{conj:CH}
For all $\mH \in \H^m(\cpx^n)$,
$\rank (\mH) = \rank_S(\mH)$.
\end{conjecture}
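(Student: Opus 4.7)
The inequality $\rank(\mH) \leq \rank_S(\mH)$ is automatic, so the content is the reverse inequality. I would begin by disposing of the cases already implicit in the paper's main theorems: Theorem \ref{thm:Srank:bSrank}, Theorem \ref{thm:rkrl:genodd}(i), and Theorem \ref{thm: rank order three Hankel} already force $\rank(\mH) = \rank_S(\mH)$ whenever $m$ is even, or $m=3$, or (for odd $m$) whenever the catalecticant has rank $r \le 1+(n-1)m_0$ and the kernel form $f(x,y)$ produced by Algorithm~\ref{algorithm: 1} has no multiple root. Thus the conjecture reduces to two remaining regimes: odd orders $m \ge 5$ in the large-rank range, and Hankel tensors of any order whose kernel form $f(x,y)$ has a multiple root (i.e., the situations where $\rank_V(\mH)$ may strictly exceed $\rank_S(\mH)$).

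The first strategic ingredient would be Comon's conjecture for binary forms, which is known for $\operatorname{S}^d(\cpx^2)$ in a broad range of cases. Through the isomorphism $\pi$ of \eqref{eqn:tensor-binary form} and Lemma \ref{lemma: V-rank = S-rank}, any rank statement about $h = \pi(\mH)$ in $\operatorname{S}^d(\cpx^2)$ transfers into a statement about the Vandermonde and symmetric ranks of $\mH$. The goal is to promote such a binary statement into a statement about $\rank(\mH)$ viewed as an order-$m$ tensor in $\T^m(\cpx^n)$, by exploiting the fact that the cp rank of a Hankel tensor is controlled by the rank of its catalecticant $C_{d-l,l}(h)$ through Lemma~\ref{lm:matF(H)}.

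The second ingredient would be a refinement of the lower bound on $\rank(\mH)$ via generalized Koszul/Young flattenings of the sort used in Section~\ref{sec:order=3}. Concretely, I would seek, for each $m$ and $n$, a flattening map $\varphi_\mH^p : \bigl(\bigwedge^p \cpx^n\bigr) \otimes (\cpx^n)^\ast \to \bigl(\bigwedge^{p+1}\cpx^n\bigr) \otimes \T^{m-2}(\cpx^n)$ whose rank, when evaluated on a suitable Hankel specialization, matches $\binom{n-1}{p}\rank_S(\mH)$. Combined with the upper bound $\rank_S(\mH) \le \rank_V(\mH)$, and with uniqueness/identifiability results for subgeneric symmetric decompositions from \cite{ChOtVan15,GalMel}, this should squeeze $\rank(\mH)$ and $\rank_S(\mH)$ together.

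The principal obstacle is the degenerate case in which $f(x,y)$ has a multiple root. There $\rank_V(\mH) = d-r+2$ is strictly greater than $\brank_V(\mH) = r$, and the symmetric rank $\rank_S(\mH)$ can sit anywhere in this gap, so a direct flattening bound like $\rank(\mH) \ge r$ is not tight. One must exhibit a sharper apolarity or flattening obstruction that detects the multiple-root structure of $f$ and produces a lower bound on $\rank(\mH)$ matching $\rank_S(\mH)$ itself. In view of Shitov's counterexample \cite{Shitov2017} to Comon's conjecture for general symmetric tensors, any such obstruction must genuinely use the Hankel constraint --- this is where I expect the technical heart of the argument to lie, and plausibly requires combining the classification of catalecticant kernels in Theorem~\ref{thm: symmetric rank} with the rational-normal-curve geometry underlying the Vandermonde decomposition.
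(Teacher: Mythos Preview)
The statement you are attempting to prove is Conjecture~\ref{conj:CH}, which the paper explicitly poses as an \emph{open problem} in Section~\ref{sec:openqs}; there is no proof in the paper to compare against. What you have written is a research outline, not a proof, and you acknowledge as much in your final paragraph (``this is where I expect the technical heart of the argument to lie'').

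Beyond that structural issue, the reduction in your first paragraph overclaims what the paper establishes. Theorem~\ref{thm:Srank:bSrank} does \emph{not} settle $\rank(\mH)=\rank_S(\mH)$ for all even-order Hankel tensors: part~(ii) (the multiple-root case) gives only $\rank_S(\mH)\ge\brank(\mH)=r$ and says nothing about $\rank(\mH)$ versus $\rank_S(\mH)$. Likewise Theorem~\ref{thm: rank order three Hankel} is stated only for \emph{generic} $\mH\in\Hank^3(\cpx^n)$; the examples immediately following it exhibit non-generic cubic Hankel tensors for which the equality of all ranks fails (though $\rank=\rank_S$ happens to hold in those specific examples, this is verified ad hoc via Macaulay2, not by the theorem). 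So the ``remaining regimes'' you identify do not exhaust the open cases.

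Your proposed ingredients are also not sufficient as stated. Promoting Comon's conjecture for the binary form $h=\pi(\mH)$ through Lemma~\ref{lemma: V-rank = S-rank} yields information about $\rank_V(\mH)$, not about $\rank(\mH)$ as an order-$m$ tensor in $\T^m(\cpx^n)$; the cp rank of $\mH$ is taken with respect to a completely different ambient space than the cp rank of $h$. And a Koszul flattening bound of the form $\rank(\mH)\ge\rank_S(\mH)$ would itself \emph{be} Comon's conjecture for $\mH$---there is no known flattening that certifies symmetric rank rather than border rank, which is precisely why the degenerate (multiple-root) case is hard. In short, the proposal correctly locates the difficulty but does not supply any mechanism to overcome it; the conjecture remains open.
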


\bigskip
\noindent
{\bf Acknowledgement}\,
The authors would like to thank Lek-Heng Lim
and two anonymous referees for their fruitful suggestions
on improving the paper.
Jiawang Nie was partially supported by the NSF grants
DMS-1417985 and DMS-1619973. Ke Ye was partially supported
by National Key R\&D Program of China Grant no.~2018YFA0306702, the NSFC grant NSFC No.11688101, the Hundred Talents Program
of the Chinese Academy of Sciences
as well as the Recruitment Program of Global Experts of China.

%
%

\bibliographystyle{plain}

\appendix
\section{The proof of Lemma~\ref{lemma: technical lemma}}
\label{sec:appendix}
In this Appendix, we give the proof of
Lemma~\ref{lemma: technical lemma}, which is used in the proof of
Theorem~\ref{lemma:lower bound of Hankel tensors}.
We will also work out some examples to illustrate the idea of the proof. Readers are recommended to read these examples to better understand the proof. We first briefly describe the strategy we employ to prove Lemma~\ref{lemma: technical lemma}.
\begin{itemize}
\item first we investigate entries of $M$ to conclude that $M$ is a block diagonal matrix with blocks $M_0,\dots, M_p$ where $M_s$ has $\binom{n-p-1}{s} \binom{p}{s-1}(p+1)$ columns. This is done in \emph{Step} 1--3 below;
\item then we prove that each $M_s$ is of full rank by showing that the reduction of $M_s$ from $\mathbb{Z}$ to $\mathbb{Z}_2$ is non-singular. This is done in \emph{Step} 4 and 5 below.
\end{itemize}

\begin{proof}[Proof of Lemma~\ref{lemma: technical lemma}]
By definition of $R$ and $C$, we have
\[
\# R = \binom{n-1}{p-1} \ge  \# C = \binom{n-1}{p+1}.
\]
Then $D$ is a $\binom{n-1}{p}n\times \binom{n-1}{p}n$ matrix, $T_1$ is a $\binom{n-1}{p}n \times \binom{n-1}{p+1}n$ matrix and $T_2$ is a $\binom{n-1}{p-1}n \times \binom{n-1}{p}n$ matrix. This implies that $M = - T_2 D T_1$   is a $\binom{n-1}{p-1}n \times \binom{n-1}{p+1}n$. For each $J'\in C$ and $1\le k \le n$, we denote by $v_{J',k}$ the $(J',k)$-th column vector of $M$. We will prove that the matrix $M$ has rank $\binom{n-1}{p+1} (p+1)$ in the following steps.

\emph{Step $1$:} We describe blocks $M_{J,J'}$ of the matrix $M$. We may partition $T_1$ by blocks of size $n\times n$ and index them by elements in $R_0$ and $C$. To be more precise, for each $J\in R_0$ and $J'\in C$ we denote by $T_{1,J,J'}$ the submatrix obtained by taking rows $(J,1),\cdots,(J,n)$ and columns $(J',1),\dots, (J',n)$. Similarly, we may also partition $T_2$ (resp. $D$) by blocks of size $n\times n$ and index them by elements in $R$ (resp. $R_0$) and $C_0$ (resp. $C_0$). We denote these blocks by $T_{2,J,J'},J\in R,J'\in C_0$ (resp. $D_{J',J},J'\in C_0,J\in R_0$)\footnote{Here we remark that according to the definition of $D$, we should denote each block by $D_{J,J'}$ where $J\in R_0, J'\in C_0$, but we switch $J$ with $J'$ to simplify our notation. This is valid as $\# R_0 = \# C_0$ and $D^\mathsf{T} = D$.}. Since $M = - T_2 D T_1$, we may partition $M$ in the same fashion and denote these blocks by $M_{J,J'},J\in R, J'\in C$. Moreover,
\[
M_{J,J'} = - \sum_{J_0'\in C_0,J_0\in R_0}T_{2,J,J'_0} D_{J'_0,J_0} T_{1,J_0,J'}.
\]
We notice that
\begin{itemize}
\item $D_{J'_0,J_0}\ne 0$  if and only if $J'_0 = J_0 \cup\{\lfloor (n+1)/2 \rfloor\}, J'_0\in C_0, J_0 \in R_0$ .
\item $T_{2,J,J'_0}\ne 0$ only if $J\subsetneq J'_0$, $J\in R, J'_0\in C_0$.
\item $T_{1,J_0,J'}\ne 0$ only if $J_0 \subsetneq J', J_0\in R_0, J'\in C$.
\end{itemize}
Therefore, $M_{J,J'}\ne 0$ only if there exists $j < \lfloor (n+1)/2 \rfloor < j'$ such that
\[
J' = J\setminus \{\lfloor (n+1)/2 \rfloor\} \cup \{j,j'\},
\]
and
\begin{align*}
M_{J,J'}  = &-(T_{2,J,J\cup \{j\}} D_{J\cup \{j\}, J'\setminus \{j'\}} T_{1,J'\setminus \{j'\},J'} \\
&+ T_{2,J,J\cup \{j'\}} D_{J\cup \{j'\}, J'\setminus \{j\}} T_{1,J'\setminus \{j\},J'}).
\end{align*}
On the other hand, we recall from the proof of Theorem \ref{lemma:lower bound of Hankel tensors} that
\[
D_{J\cup\lfloor (n+1)/2 \rfloor,J} = \epsilon_{J,J\cup\{\lfloor (n+1)/2 \rfloor\}} \Id_n,\quad
T_{s,J,J\cup \{j\}} = \epsilon_{J,J\cup \{j\}} T_{j- \lfloor (n+1)/2 \rfloor}, \quad s= 1,2.
\]
Here $T_{l}$ is the Toeplitz matrix $(t_{ij})$ defined by
\[
t_{ij} = \begin{cases}
1,\quad \text{if } j-i = l,\\
0, \quad \text{otherwise}.
\end{cases}
\]
Thus we obtain
\begin{align*}
T_{2,J,J\cup \{j\}} &= \epsilon_{J,J\cup \{j\}} T_{j-\lfloor (n+1)/2 \rfloor}, \\
D_{J\cup \{j\},J'\setminus \{j'\}} &= \epsilon_{J'\setminus \{j'\}, J'\setminus \{j'\} \cup \{\lfloor (n+1)/2 \rfloor\}} \Id_n, \\
T_{1,J'\setminus \{j'\},J'} &= \epsilon_{J'\setminus \{j'\},J'} T_{j' - \lfloor (n+1)/2 \rfloor}, \\
T_{2,J,J\cup \{j'\}} &= \epsilon_{J,J\cup \{j'\}} T_{j'-\lfloor (n+1)/2 \rfloor}, \\
D_{J\cup \{j'\},J'\setminus \{j\}} &= \epsilon_{J'\setminus \{j\}, J'\setminus \{j\} \cup \{\lfloor (n+1)/2 \rfloor\}} \Id_n, \\
T_{1,J'\setminus \{j\},J'} &= \epsilon_{J'\setminus \{j\},J'} T_{j - \lfloor (n+1)/2 \rfloor}.
\end{align*}
We set
\begin{align*}
\delta_{J,J'} &= \delta_{J,j,j'} = \epsilon_{J,J\cup \{j\}}\epsilon_{J'\setminus \{j'\}, J'\setminus \{j'\} \cup \{\lfloor (n+1)/2 \rfloor\}} \epsilon_{J'\setminus \{j'\},J'} \\
\delta_{J,j',j} &= \epsilon_{J,J\cup \{j'\}}\epsilon_{J'\setminus \{j\}, J'\setminus \{j\} \cup \{\lfloor (n+1)/2 \rfloor\}} \epsilon_{J'\setminus \{j\},J'}
\end{align*}
and it is straightforward to verify that $\delta_{J,j',j} = - \delta_{J,j,j'}$. Hence we may write
\begin{equation}\label{eqn:blocks of M}
M_{J,J'} =  \delta_{J,J'}(T_{j'-\lfloor (n+1)/2 \rfloor} T_{j-\lfloor (n+1)/2 \rfloor} - T_{j-\lfloor (n+1)/2 \rfloor} T_{j'-\lfloor (n+1)/2 \rfloor} ),
\end{equation}
if $j < r-n + 1 < j'$ and $J' = J\setminus \{\lfloor (n+1)/2 \rfloor\} \cup \{j,j'\}$ and $M_{J,J'}=0$ otherwise.

\emph{Step $2$:} If $j < r-n + 1 < j'$ and $J' = J\setminus \{\lfloor (n+1)/2 \rfloor\} \cup \{j,j'\}$, then by \eqref{eqn:blocks of M}, the $(i,k)$-th entry $(M_{J,J'})_{i,k}$ of the $n \times n$ matrix $M_{J,J'}$ is zero unless $k-i = j+j'-2\lfloor (n+1)/2 \rfloor$ and in this case, we have:
\begin{itemize}
\item if $j+j'-2\lfloor (n+1)/2 \rfloor \ge 0$, then
\begin{equation}\label{eqn: entries-1}
(M_{J,J'})_{i,k} = \begin{cases}
\delta_{J,J'},&\text{ if } j'-\lfloor (n+1)/2 \rfloor \ge k \ge j+j'-2\lfloor (n+1)/2 \rfloor+1,  \\
-\delta_{J,J'},&\text{ if } n \ge k \ge 2n - r +j, \\
0, & \text{ otherwise.}
\end{cases}
\end{equation}
\item if $j+j'-2\lfloor (n+1)/2 \rfloor < 0$, then
\begin{equation}\label{eqn: entries-2}
(M_{J,J'})_{i,k} = \begin{cases}
\delta_{J,J'},&\text{ if } j'-\lfloor (n+1)/2 \rfloor \ge k \ge 1,\\
-\delta_{J,J'}, & \text{ if } j+j'-2r+3n-2 \ge k \ge j-r+2n, \\
0, & \text{ otherwise.}
\end{cases}
\end{equation}
\end{itemize}
In particular, we have
\begin{equation}\label{eqn: vanishing of middle columns}
(M_{J,J'})_{i,\lfloor \frac{n+1}{2} \rfloor } = (M_{J,J'})_{\lfloor \frac{n+1}{2} \rfloor ,k} = 0
\end{equation}
for any $J\in R,J'\in C$ and $1\le i \le n$.
By \eqref{eqn: entries-1} and \eqref{eqn: entries-2}, we see that $(M_{J,J'})_{i,k} = 0$ unless
\begin{itemize}
\item $J' = J\setminus\{\lfloor \frac{n+1}{2} \rfloor \} \cup \{j,j'\},  j < \lfloor \frac{n+1}{2} \rfloor  < j'$,
\item $k - i =  (j + j') - 2 \lfloor \frac{n+1}{2} \rfloor$,
\item if $j + j' - 2 \lfloor \frac{n+1}{2} \rfloor \ge 0$, then
\[
k\in [j+j' - 2 \lfloor \frac{n+1}{2} \rfloor +1, j' - \lfloor \frac{n+1}{2} \rfloor]\cup [2n-r+j,n],
\]
if $j+j' - 2  \lfloor \frac{n+1}{2} \rfloor < 0$, then
\[
k\in [1,j'- \lfloor \frac{n+1}{2} \rfloor]\cup [j-r+2n, j+j'-2r+3n-2].
\]
\end{itemize}
In particular, if both $(M_{J,J'})_{i,k}$ and $(M_{J,\widetilde{J'}})_{i,\widetilde{k}}$ are nonzero, then we must have
\begin{equation}\label{eqn: same row variables}
\widetilde{k} - k = \sum_{t=1}^{p+1} (\widetilde{j'_t} - j'_t) = (\widetilde{j}+\widetilde{j'})- (j+j'),
\end{equation}
where $J' = (j'_1<\cdots < j'_{p+1}) = J\setminus \{\lfloor (n+1)/2 \rfloor \}\cup \{j,j'\}$ and $\widetilde{J'} = (\widetilde{j'_1} <\dots < \widetilde{j'_{p+1}}) = J\setminus \{\lfloor (n+1)/2 \rfloor \}\cup \{\widetilde{j},\widetilde{j'}\}$. Similarly, if both $(M_{J,J'})_{i,k}$ and $(M_{\widetilde{J},J'})_{\widetilde{i},k}$ are nonzero, then we must have
\begin{equation}\label{eqn: same column variables}
\widetilde{i} - i = (\widetilde{j} + \widetilde{j'}) - (j+j'),
\end{equation}
where $J\setminus{\lfloor (n+1)/2 \rfloor} \cup \{j,j'\}= J' = \widetilde{J}\setminus{\lfloor (n+1)/2 \rfloor} \cup \{\widetilde{j},\widetilde{j'}\}$.

\emph{Step $3$:} We may write $J\in R$ as
\[
J = (j_1 < \cdots < j_{s-1} < j_{s} = \lfloor (n+1)/2 \rfloor< j_{s+1} < \cdots < j_{p} )
\]
and write $J'\in C$ as
\[
J' = (j'_1 < \cdots < j'_{t} < j'_{t+1} < \cdots < j'_{p+1}),
\]
where $j'_t < \lfloor (n+1)/2 \rfloor < j'_{t+1}$. By \eqref{eqn:blocks of M}, we see that in particular, $M_{J,J'} = 0$ if $s + 1\ne t $. This implies that the matrix $M$ is a block diagonal matrix $M = \operatorname{Diag}\{M_1,\dots, M_p\}$ where $M_s,1\le s \le \lfloor (n-1)/2 \rfloor $ is the submatrix of $M$ obtained by taking $J = (j_1 < \cdots < j_p)\in R$ and $J' = (j'_1 < \cdots < j'_{p+1})\in C$ such that
\[
j_s = \lfloor (n+1)/2 \rfloor, \quad j'_{s+1} < \lfloor (n+1)/2 \rfloor < j'_{s+2}.
\]
We define $R_s$ to be the subset of $R$ consisting of all $J = (j_1 < \cdots < j_p)\in R$ such that $j_s = \lfloor (n+1)/2 \rfloor$ and define $C_s$ to be the subset of $C$ consisting of all $J' = (j'_1 < \cdots < j'_{p+1})\in C$ such that $j'_{s} < \lfloor (n+1)/2 \rfloor < j'_{s+1}$.

For each $1\le s \le  \lfloor (n-1)/2 \rfloor $ and $J\in R_s,J'\in C_s$, we remove $(J,i)$-th row from $M_s$ where $p - s + 2 \le i \le n-s $ and we remove $(J',k)$-th column from $M_s$ where $p-s +2  \le k \le n-s $. We still denote the new matrix by $M_s$. We denote by $v_{J',k}$ the $(J',k)$-th column vector of $M_s$ if $J'\in C_s$. We use the same notation to denote column vectors of $M$ before, but since $M$ is a block diagonal matrix with diagonal blocks $M_0,\dots, M_p$, this abuse of the notation should cause no confusion.
We remark that the matrix $M_s$ is of size
\[
\binom{n-p-1}{s-1}\binom{p}{s}(p+1) \times \binom{n-p-1}{s} \binom{p}{s-1}(p+1)
\]
and thus it has more rows than columns. Hence it suffices to prove that the matrix $M_s$ has full rank, or equivalently, the set
\begin{equation}\label{eqn: linearly independent set}
S_s = \{ v_{J',k}: J'\in C_s, k =1,\dots, p-s+1,n-s+1,\dots,n\}
\end{equation}
is a linearly independent set.

\emph{Step $4$}: Let $s$ be an integer such that $1 \le s \le \lfloor (n-1)/2 \rfloor $ and let $S_s$ be the set defined in \eqref{eqn: linearly independent set}. To prove that $S_s$ is a linearly independent set, we consider
\begin{equation}\label{eqn:linear independence of column vectors}
\sum_{\substack{J'\in C_s\\   k =1,\dots, p-s+1,n-s+1,\dots, n}} x_{J',k} v_{J',k} = 0,
\end{equation}
where $x_{J',k}$'s are unknowns and we want to prove that $x_{J',k} = 0$ for all $J'\in C_s$ and $ k =1,\dots, p-s+1,n-s+1,\dots, n$. Since $v_{J',k}$ is the $(J',k)$-th column vector of $M_s$, the $(J,i)$-th entry $v_{J',k}^{J,i}$ of $v_{J',k}$ is equal to the $(i,k)$-th entry $(M_{J,J'})_{i,k}$ of $M_{J,J'}$ defined by \eqref{eqn: entries-1} and \eqref{eqn: entries-2}. Hence from \eqref{eqn:linear independence of column vectors} we have for each $J\in R_s$ and $1\le i \le n$ the following linear equation:
\begin{equation}\label{eqn:linear independence of column vectors-1}
\sum_{\substack{J'\in C_s\\  k =1,\dots, p-s+1,n-s+1,\dots, n}} (M_{J,J'})_{i,k} x_{J',k} = 0.
\end{equation}
We notice that if $(M_{J,J'})_{i,k} \ne 0$ and $k\le p-s+1$ (resp. $k\ge n-s+1$), then whenever $(M_{J,\widetilde{J'}})_{i,\widetilde{k}} \ne 0$, we must also have $\widetilde{k}\le p-s+1$ (resp. $\widetilde{k}\ge n-s+1$). This can be seen from \eqref{eqn: same row variables} and \eqref{eqn: vanishing of middle columns}. Hence \eqref{eqn:linear independence of column vectors-1} can be simplified as:
\begin{align}
\label{eqn:left equations}
\sum_{\substack{J'\in C_s\\  k =1,\dots, p-s+1}} (M_{J,J'})_{i,k} x_{J',k} &= 0 \quad  \text{ or }\\
\label{eqn:right equations}
\sum_{\substack{J'\in C_s\\  k =n-s+1,\dots, n}} (M_{J,J'})_{i,k} x_{J',k} &= 0.
\end{align}
We denote by $X^i_J$ the set of variables $x_{J',k}$ whose coefficient $(M_{J,J'})_{i,k}\ne 0$. Let $A=\{(J_1,i_1),\dots, (J_m,i_m)\}$ be a maximal set such that for any $(J_a,i_a)\in A$, there exists some $(J_b,i_b)\in A, a\ne b$,
\[
X^{i_a}_{J_a} \cap X^{i_b}_{J_b} \ne \emptyset.
\]
If $x_{J_0',k_0}\not\in \cup_{(J,i)\in A} X^i_J$ then we see that $x_{J_0',k_0}$ is independent on $x_{J',k} \in \cup_{(J,i)\in A} X^i_J$. Therefore, it is sufficient to prove that the solution of the linear system:
\begin{equation}\label{eqn: linearly independence of columns-2}
\sum_{x_{J',k}\in X^{i_t}_{J_t}} (M_{J_t,J'})_{i_t,k} x_{J',k} = 0,\quad t=1,\dots,m.
\end{equation}
is zero. By \eqref{eqn:left equations} and \eqref{eqn:right equations}, we see that
\begin{itemize}
\item either $k\le p-s+1$ for all $x_{J',k}\in \cup_{(J,i)\in A} X^i_J$,
\item or $k\ge n-s +1$ for all $x_{J',k}\in \cup_{(J,i)\in A} X^i_J$.
\end{itemize}
Similarly, we also have
\begin{itemize}
\item either $i\le \lfloor p -s + 1$ for all $(J,i)\in A$,
\item or $i \ge n-s+1$ for all $(J,i)\in A$.
\end{itemize}
If there exists $x_{J',k}\in \cup_{(J,i)\in A} X^i_J$ such that $k\ge n-s+1$ and $i\le p - s + 1$, then $k - i \ge n - p$. However, we have
\[
k - i = (j+j') - 2 \lfloor \frac{n+1}{2} \rfloor,
\]
this implies that
\[
j + j' \ge n + 2\lfloor \frac{n+1}{2} \rfloor - p \ge n + \lfloor \frac{n+1}{2} \rfloor,
\]
which contradicts the assumption that $j < \lfloor (n+1)/2 \rfloor < j'\le n $. Similarly, we may prove that if $k \le p-s+1$ and $i\ge n-s+1$, then
\[
j+j' \le p - n + 2\lfloor \frac{n+1}{2} \rfloor \le p + 1 \le \lfloor \frac{n+1}{2} \rfloor + 1,
\]
which contradicts the assumption that $1\le j <  \lfloor (n+1)/2 \rfloor < j'$. Hence if $(J,i)\in A$ and $i\le p - s + 1$ (resp. $i\ge n-s+1$), then all $x_{J',k}\in \cup_{(J,i)\in A} X^i_J$ must have $k\le p-s + 1$ (resp. $k\ge n - s + 1$). We denote by $E_1$ the set of integers $1,2,\dots, p - s + 1$ and by $E_2$ the set of integers $n-s+1,n-s+2,\dots, n$.

According to \eqref{eqn: same column variables} and \eqref{eqn: same row variables}, we may describe the set $A$ as follows: if $(J,i)\in A,i\in E_s,s=1,2$,  then $(\widetilde{J},\widetilde{i})\in A$ if and only if
\[
\widetilde{i} - i = \sum_{t=1}^p (\widetilde{j_t} - j_t ),\quad  \widetilde{i}\in E_s.
\]
The set $\cup_{(J,i)\in A} X^i_J$ can be described as in a similar way: if $x_{J',k}\in \cup_{(J,i)\in A} X^i_J, i,k\in E_s, s=1,2$, then $x_{\widetilde{J'},\widetilde{k}}\in \cup_{(J,i)\in A} X^i_J$ if and only if
\[
\widetilde{k} - k = \sum_{t=1}^{p+1} (\widetilde{j'_t} - j'_t),\quad \widetilde{k}\in E_s.
\]

\emph{Step $5$}: $A = \{(J_1,i_1),\dots, (J_m,i_m)\}$ be a maximal set such that for any $(J_a,i_a)\in A$ there exists some $(J_b,i_b)\in A$ such that
\[
X^{i_a}_{J_a} \cap X^{i_b}_{J_b} \ne \emptyset, \quad a\ne b.
\]
We prove that every $x_{J',k}\in \cup_{(J,i)\in A} X^i_J$ is equal to zero. To see this, it is sufficient to prove that the solution to the linear system \eqref{eqn: linearly independence of columns-2} over the field $\mathbb{Z}_2 =\{0,1\}$ must be trivial, i.e., $x_{J',k} = 0$. Indeed, if \eqref {eqn: linearly independence of columns-2} has a nontrivial solution over $\mathbb{C}$, then it also has a nontrivial solution over $\mathbb{Z}$ since coefficients of \eqref{eqn: linearly independence of columns-2} are $-1,1$ or $0$ and hence in particular are integers. Moreover, among these nontrivial integer solutions, there must be a solution $(a_{J',k})_{x_{J',k}\in \cup_{(J,i)\in A} X^i_J}$ such that
\[
a_{J',k} \equiv 1 \quad  (\mod 2)
\]
for some $(J',k)$. Equivalently, \eqref{eqn: linearly independence of columns-2} has a nontrivial solution over $\mathbb{Z}_2$.

\emph{Step $6$:} We denote by $M_{s,A}$ the coefficient matrix of the system \eqref{eqn: linearly independence of columns-2} and we suppose that $M_{s,A}$ is an $m\times l$ matrix. By the construction of $M_{s,A}$ we know that $m\ge l$. We define an order on column indices $\{(J'_1,k_1),\dots, (J'_l,k_l)\}$ of $M_{s,A}$ by $(J'_a,k_a) > (J'_b,k_b)$ if
\[
 \sum_{t=s+1}^{p+1} j'_{at} - \sum_{t=1}^s j'_{at} > \sum_{t=s+1}^{p+1} j'_{bt} - \sum_{t=1}^s j'_{bt}
 \]
or
\[
 \sum_{t=s+1}^{p+1} j'_{at} - \sum_{t=1}^s j'_{at} = \sum_{t=s+1}^{p+1} j'_{bt} - \sum_{t=1}^s j'_{bt}
 \text{ and } k_a > k_b,
\]
where
\begin{align*}
J'_a &= (j'_{a,1} < \cdots j'_{a,s} <j'_{a,s+1} < \cdots < j'_{a,p+1})\in C_s, \\
J'_b &= (j'_{b,1} < \cdots j'_{b,s} <j'_{b,s+1} < \cdots < j'_{b,p+1})\in C_s.
\end{align*}
Similarly, we may  define an order on the set $A = \{(J_1,i_1),\dots, (J_m,i_m)\}$ by $(J_a,i_a) > (J_b,i_b)$ if
\[
\sum_{t=s+1}^p j_{at} - \sum_{t=1}^{s-1} j_{at} > \sum_{t=s+1}^p j_{bt} - \sum_{t=1}^{s-1} j_{bt}
\]
or
\[
\sum_{t=s+1}^p j_{at} - \sum_{t=1}^{s-1} j_{at} = \sum_{t=s+1}^p j_{bt} - \sum_{t=1}^{s-1} j_{bt}
\text{ and } i_a > i_b,
\]
where
\begin{align*}
J_a &= (j_{a,1} < \cdots < j_{a,s-1} < j_{a,s} = \lfloor (n+1)/2 \rfloor < j_{a,s+1} < \cdots < j_{a,p})\in R_s, \\
J_b &= (j_{b,1} < \cdots < j_{b,s-1} < j_{b,s} = \lfloor (n+1)/2 \rfloor < j_{b,s+1} < \cdots < j_{b,p})\in R_s.
\end{align*}
With the order defined above, we may reorder $(J_1,i_1),\dots, (J_m,i_m)$ and $(J'_1,k_1),\dots, (J'_l,k_l)$ respectively so that
\begin{align*}
(J_a,i_a) &\le (J_b,i_b),\quad 1\le a < b \le m, \\
(J'_\alpha,k_\alpha) &\le (J'_\beta,k_\beta),\quad 1\le \alpha < \beta \le l.
\end{align*}

To prove that $\eqref{eqn: linearly independence of columns-2}$ only has a trivial solution over $\mathbb{Z}_2$, it suffices to prove that
\begin{itemize}
\item there exists $(J,i)\in A$ such that the $(J,i)$-th row vector $w$ of $M_{s,A}$  has exactly one nonzero entry indexed by $(J',k)$ and
\item The submatrix obtained by removing $(J,i)$-th row and $(J',k)$-th column from $M_{s,A}$ has full rank.
\end{itemize}
In fact, the row vector $w$ is one of the following:
\begin{itemize}
\item $w$ is the $(J_1,i_1)$-th row of $M_{s,A}$.
\item $w$ is the $(J_m,i_m)$-th row of $M_{s,A}$ .
\item $w$ is the $(J,i)$-th row of $M_{s,A}$, where $(J,i)$ is maximal among those such that $(M_{s,A})_{(J,i),(J'_1,k_1)}$.
\item $w$ is the $(J,i)$-th row of $M_{s,A}$, where $(J,i)$ is minimal among those such that $(M_{s,A})_{(J,i),(J'_1,k_1)}$.
\item $w$ is the $(J,i)$-th row of $M_{s,A}$, where $(J,i)$ is maximal among those such that $(M_{s,A})_{(J,i),(J'_l,k_l)}$.
\item $w$ is the $(J,i)$-th row of $M_{s,A}$, where $(J,i)$ is minimal among those such that $(M_{s,A})_{(J,i),(J'_l,k_l)}$.
\end{itemize}
We denote by $M^1_{s,A}$ the matrix obtained by removing the $(J,i)$-th row and $(J',k)$-th column from $M_{s,A}$. Then $M^1_{s,A}$ has a row vector which contains exactly one nonzero entry. Indeed, we can find this row vector in the same way as we find the row vector $w$ for $M_{s,A}$. Hence by induction, we see that $M_{s,A}$ must have full rank and this completes the proof.
\end{proof}

We illustrate the proof of Lemma \ref{lemma: technical lemma} by some examples.
\begin{example}\label{example: s=1}
By the definition of $M_{s,A}$, we see that for any positive integer $n$, if $s=1$ or $\lfloor (n-1)/2 \rfloor$, then $M_{s,A}$ is simply a $p\times p$ triangular matrix whose diagonal entries are all $1$. In this case, we see that $M_{s,A}$ obviously has full rank.
\end{example}

\begin{example}\label{example: n=6,s=2}
Let $n= 6$ and $s=2$, then $p =\left\lfloor \frac{n}{2} \right\rfloor = 3$,
\[
\left\lfloor \frac{n+1}{2} \right\rfloor = 3,\quad p+1-s = 2,\quad n-s+1 =5.
\]
We also have
\begin{align*}
C_s &= \{(1,2,4,5), (1,2,4,6), (1,2,5,6)\}, \\
R_s &= \{(1,3,4),(1,3,5),(1,3,6),(2,3,4),(2,3,5),(2,3,6)\}.
\end{align*}
The matrix $M_s$ is
\[
\arraycolsep=2.5pt
M_s =  \kbordermatrix{
       & (1,2,4,5) & (1,2,4,6) & (1,2,5,6) \\
(1,3,4) & T_{-1,2} & T_{-1,3} & 0 \\
(1,3,5) & T_{-1,1} & 0 & T_{-1,3} \\
(1,3,6) & 0 & T_{-1,1} & T_{-1,2}\\
(2,3,4) & T_{-2,2} & T_{-2,3} & 0 \\
(2,3,5) & T_{-2,1} & 0 & T_{-2,3}\\
(2,3,6) & 0 & T_{-2,1} & T_{-2,2}
},
\]
where $T_{a,b}$ is the submatrix obtained by removing columns $k=3,4$ and rows $i = 3,4$ from $T_aT_b - T_bT_a$ over $\mathbb{Z}_2$. Here $T_a$ is the $n\times n$ Toeplitz matrix whose $(u,v)$-th entry is one if $v-u = a$ and is zero otherwise. For example, $T_{-1,2}$  and $T_{-2,2}$ are
\[
T_{-1,2} =
\begin{bmatrix}
0 & 1   & 0 & 0 \\
0 & 0   & 0 & 1 \\
0 & 0   & 0 & 0 \\
0 & 0   & 0 & 0 \\
\end{bmatrix}, \quad T_{-2,2} =
\begin{bmatrix}
1 & 0  & 0 & 0 \\
0 & 1 &  0 & 0 \\
0 & 0  & 1 & 0 \\
0 & 0  & 0 & 1 \\
\end{bmatrix}.
\]
If we take
\[
A = \{((2,3,4),1), ((2,3,5),2), ((1,3,5),1), ((1,3,6),2) \},
\]
then the matrix $M_{s,A}$ is
\[
\arraycolsep=2.5pt
M_{s,A} =  \kbordermatrix{
       & ((1,2,4,5),1) & ((1,2,4,6),2)  \\
((2,3,4),1) & 1 & 1 \\
((2,3,5),2) & 1 & 0 \\
((1,3,5),1) & 1 & 0  \\
((1,3,6),2) & 0 & 0
}.
\]
It is straightforward to verify that the $((1,3,5),1)$-th row of $M_{s,A}$ has only one nonzero entry, which is indexed by $((1,2,4,5),1)$. We delete the $((1,3,5),1)$-th row and the $((1,2,4,5),1)$-th column of $M_{s,A}$ to obtain $M^1_{s,A} = \begin{bmatrix}
1
\end{bmatrix}$, which has full rank.
\end{example}


\begin{example}\label{example: n=7,s=2}
We let $n=7$, $s=2$ then $p = 3$,
\[
\left\lfloor \frac{n+1}{2} \right\rfloor =4,\quad p+1-s = 2,\quad n-s+1 = 6.
\]
We take
\[
A = \{((2,4,5),1),((3,4,5),2),((1,4,6),1),((2,4,6),2),((1,4,7),2)\}.
\]
The matrix $M_{s,A}$ is
\[
\arraycolsep=2.5pt
M_{s,A} =  \kbordermatrix{
       & ((2,3,5,6),2) & ((1,3,5,6),1)  & ((1,3,5,7),2)  & ((1,2,5,7),1) & ((1,2,6,7),2)\\
((3,4,5),2) & 1& 1  & 1  &0 & 0 \\
((2,4,5),1)&1 & 0 & 0  & 1 & 0 \\
((2,4,6),2) & 0& 0  & 0  & 0 & 1\\
((1,4,6),1) & 0& 1  & 0  & 0 & 1\\
((1,4,7),2) & 0& 0  & 0   & 1 & 1\\
}.
\]
We see that the $((2,4,6),2)$-th row has the unique nonzero entry indexed by $((1,2,6,7),2)$. We obtain $M^1_{s,A}$ by removing $((2,4,6),2)$-th row and $((1,2,6,7),2)$-th column:
\[
\arraycolsep=2.5pt
M^1_{s,A} =  \kbordermatrix{
       & ((2,3,5,6),2) & ((1,3,5,6),1)  & ((1,3,5,7),2) ) & ((1,2,5,7),1) \\
((3,4,5),2) & 1& 1  & 1  &0 \\
((2,4,5),1)&1 & 0 & 0 & 1  \\
((1,4,6),1) & 0& 1  & 0  & 0\\
((1,4,7),2) & 0& 0  & 0   & 1\\
}.
\]
By the same procedure, we obtain
\[
\arraycolsep=2pt
M^2_{s,A} =  \kbordermatrix{
       & ((2,3,5,6),2) & ((1,3,5,6),1)  & ((1,3,5,7),2) )  \\
((3,4,5),2) & 1& 1  & 1   \\
((2,4,5),1)&1 & 0 & 0 \\
((1,4,6),1) & 0& 1  & 0  \\
}
\]
by removing $((1,4,7),1)$-th row and $((1,2,5,7),1)$-th column of $M^1_{s,A}$ and we obtain
\[
\arraycolsep=2pt
M^3_{s,A} =  \kbordermatrix{
       & ((1,3,5,6),1)  & ((1,3,5,7),2) )  \\
((3,4,5),2) & 1  & 1   \\
((1,4,6),1) & 1  & 0  \\
}, \quad
\arraycolsep=1pt
M^4_{s,A} =  \kbordermatrix{
       & ((1,3,5,7),2) )  \\
((3,4,5),2)   & 1   \\
},
\]
by removing $((2,4,5),1)$-th row and $((2,3,5,6),2)$-th column of $M^2_{s,A}$ and removing $((1,4,6),1)$-th row and $((1,3,5,6),1)$-th column of $M^3_{s,A}$, respectively.
\end{example}

\begin{example}\label{example: n=8,s=2}
Let $n=8, s =2$ then $p = \left\lfloor n/2 \right\rfloor =4$ and
\[
\left\lfloor \frac{n+1}{2} \right\rfloor = 4, \quad p+1 -s = 3, \quad  n-s +1 = 7.
\]
We consider
\begin{align*}
A = \{&((3,4,5,6),1), ((3,4,5,7),2), ((3,4,6,7),3), ((2,4,5,7),1),((3,4,5,8),3),\\&((2,4,5,8),2), ((1,4,6,7),1),((2,4,6,8),3),((1,4,6,8),2),((1,4,7,8),3)\}.
\end{align*}
The corresponding $M_{s,A}$ is
\[
\tiny
\arraycolsep=0.5pt
M_{s,A} =  \kbordermatrix{
       & ((2,3,5,6,7),2) & ((1,3,5,6,7),1)  & ((2,3,5,6,8),3)  & ((1,3,5,6,8),2) & ((1,2,5,6,8),1) & ((1,3,5,7,8),3)  & ((1,2,5,7,8),2) &((1,2,6,7,8),3) \\
((3,4,5,6),1) & 1  & 1 & 1 & 1 & 0 & 0  & 0 & 0   \\
((3,4,5,7),2) & 1  & 1 & 0 & 0 & 0 & 1  & 0 & 0  \\
((3,4,6,7),3) & 0  & 1 & 0 & 0 & 0 & 0  & 0 & 0  \\
((2,4,5,7),1) & 1  & 0 & 0 & 0 & 0 & 0  & 1 & 0 \\
((3,4,5,8),3) & 0  & 0 & 0 & 1 & 0 & 1  & 0 & 0  \\
((2,4,5,8),2) & 0  & 0 & 0 & 0 & 1 & 0  & 1 & 0   \\
((1,4,6,7),1) & 0  & 1 & 0 & 0 & 0 & 0  & 0 & 1   \\
((2,4,6,8),3) & 0  & 0 & 0 & 0 & 1 & 0  & 0 & 1  \\
((1,4,6,8),2) & 0  & 0 & 0 & 0 & 1 & 0  & 0 & 1 \\
((1,4,7,8),3) & 0  & 0 & 0 & 0 & 0 & 0  & 1 & 0
}
\]
and we remove $((1,4,7,8),3)$-th row and $((1,2,5,7,8),2)$-th column of $M_{s,A}$ to obtain
\[
\tiny
\arraycolsep=0.5pt
M^1_{s,A} =  \kbordermatrix{
       & ((2,3,5,6,7),2) & ((1,3,5,6,7),1)  & ((2,3,5,6,8),3)  & ((1,3,5,6,8),2) & ((1,2,5,6,8),1) & ((1,3,5,7,8),3)  &((1,2,6,7,8),3) \\
((3,4,5,6),1) & 1  & 1 & 1 & 1 & 0 & 0   & 0   \\
((3,4,5,7),2) & 1  & 1 & 0 & 0 & 0 & 1   & 0  \\
((3,4,6,7),3) & 0  & 1 & 0 & 0 & 0 & 0   & 0  \\
((2,4,5,7),1) & 1  & 0 & 0 & 0 & 0 & 0   & 0 \\
((3,4,5,8),3) & 0  & 0 & 0 & 1 & 0 & 1   & 0  \\
((2,4,5,8),2) & 0  & 0 & 0 & 0 & 1 & 0   & 0   \\
((1,4,6,7),1) & 0  & 1 & 0 & 0 & 0 & 0   & 1   \\
((2,4,6,8),3) & 0  & 0 & 0 & 0 & 1 & 0   & 1  \\
((1,4,6,8),2) & 0  & 0 & 0 & 0 & 1 & 0   & 1
}.
\]
We remove $((2,4,5,7),1)$-th row and $((2,3,5,6,7),2)$-th column of $M^1_{s,A}$ to obtain
\[
\tiny
\arraycolsep=0.5pt
M^2_{s,A} =  \kbordermatrix{
        & ((1,3,5,6,7),1)  & ((2,3,5,6,8),3)  & ((1,3,5,6,8),2) & ((1,2,5,6,8),1) & ((1,3,5,7,8),3)  &((1,2,6,7,8),3) \\
((3,4,5,6),1)   & 1 & 1 & 1 & 0 & 0   & 0   \\
((3,4,5,7),2)   & 1 & 0 & 0 & 0 & 1   & 0  \\
((3,4,6,7),3)   & 1 & 0 & 0 & 0 & 0   & 0  \\
((3,4,5,8),3)   & 0 & 0 & 1 & 0 & 1   & 0  \\
((2,4,5,8),2)   & 0 & 0 & 0 & 1 & 0   & 0   \\
((1,4,6,7),1)   & 1 & 0 & 0 & 0 & 0   & 1   \\
((2,4,6,8),3)   & 0 & 0 & 0 & 1 & 0   & 1  \\
((1,4,6,8),2)   & 0 & 0 & 0 & 1 & 0   & 1
}.
\]
We remove $((3,4,6,7),3)$-th row and $((1,3,5,6,7),1)$-th column of $M^3_{s,A}$ to obtain
\[
\tiny
\arraycolsep=0.5pt
M^3_{s,A} =  \kbordermatrix{
         & ((2,3,5,6,8),3)  & ((1,3,5,6,8),2) & ((1,2,5,6,8),1) & ((1,3,5,7,8),3)  &((1,2,6,7,8),3) \\
((3,4,5,6),1)    & 1 & 1 & 0 & 0   & 0   \\
((3,4,5,7),2)    & 0 & 0 & 0 & 1   & 0  \\
((3,4,5,8),3)    & 0 & 1 & 0 & 1   & 0  \\
((2,4,5,8),2)    & 0 & 0 & 1 & 0   & 0   \\
((1,4,6,7),1)    & 0 & 0 & 0 & 0   & 1   \\
((2,4,6,8),3)    & 0 & 0 & 1 & 0   & 1  \\
((1,4,6,8),2)    & 0 & 0 & 1 & 0   & 1
}.
\]
We remove $((1,4,6,7),1)$-th row and $((1,2,6,7,8),3)$-th column of $M^3_{s,A}$ to obtain
\[
\tiny
\arraycolsep=0.5pt
M^4_{s,A} =  \kbordermatrix{
         & ((2,3,5,6,8),3)  & ((1,3,5,6,8),2) & ((1,2,5,6,8),1) & ((1,3,5,7,8),3)  \\
((3,4,5,6),1)    & 1 & 1 & 0 & 0      \\
((3,4,5,7),2)    & 0 & 0 & 0 & 1     \\
((3,4,5,8),3)    & 0 & 1 & 0 & 1     \\
((2,4,5,8),2)    & 0 & 0 & 1 & 0      \\
((2,4,6,8),3)    & 0 & 0 & 1 & 0     \\
((1,4,6,8),2)    & 0 & 0 & 1 & 0
}.
\]
It is easy to determine $M^5_{s,A},M^6_{s,A}$ and lastly $M^7_{s,A} = \begin{bmatrix}
1 & 0 & 0
\end{bmatrix}^\mathsf{T}$.
\end{example}

\end{document}